\numberwithin{equation}{section}
\newtheorem{theorem}{Theorem}[section]
\newtheorem{definition}[theorem]{Definition}
\newtheorem{proposition}[theorem]{Proposition}
\newtheorem{lemma}[theorem]{Lemma}
\newtheorem{corollary}[theorem]{Corollary}
\newtheorem{remark}[theorem]{Remark}
\newtheorem{example}[theorem]{Example}
\newcommand{\V}{\operatorname{var}}
\newcommand{\R}{\mathbb{R}}
\newcommand{\NN}{\mathbb{N}}
\newcommand{\cC}{\mathcal{C}}
\newcommand{\cB}{\mathcal{B}}
\newcommand{\BP}{\mathbb{P}}
\newcommand{\BE}{\mathbb{E}}
\newcommand{\BQ}{\mathbb{Q}}
\newcommand{\cN}{\mathcal{N}}
\newcommand{\bN}{{\mathbf{N}}}
\newcommand{\dd}{\mathrm{d}} 
\newcommand{\BH}{\mathbb{H}}
\newcommand{\supp}{\mathrm{supp}}
\newcommand{\conn}[1]{\xleftrightarrow{#1}}
\newcommand{\diam}{\operatorname{diam}}
\DeclareMathOperator{\I}{{\bf{1}}}
\newcommand{\SymDiff}{\bigtriangleup{ }}
\newcommand{\Set}[1]{\{#1\}}
\newcommand{\ScoreFunc}{T}
\newcommand{\NonEmptyParticles}{{\cC^{(d)}}}
\newcommand{\BorelOf}[1]{{\cB(#1)}}
\newcommand{\BoundedBorelOf}[1]{{\cB_b(#1)}}
\newcommand{\Configs}[1]{{\bN_{#1}}}
\newcommand{\Events}[1]{{\cN_{#1}}}
\newcommand{\Proba}{\BP}
\newcommand{\Expect}{\BE}
\newcommand{\GenPP}[1]{{\Phi_{#1}}}
\newcommand{\Poisson}[1]{{\Pi_{#1}}}
\newcommand{\PercThreshold}[1]{\lambda_c(#1)}
\newcommand{\Gibbs}[1]{{\Xi_{#1}}}
\newcommand{\Thinned}[2]{{\Theta^{#1}_{#2}}}
\newcommand{\CorrFun}[1]{\rho_{#1}}
\newcommand{\ASet}{\Psi}
\newcommand{\BSet}{\Gamma}
\newcommand{\CSet}{\Upsilon}
\newcommand{\AtLeastSoManyPointsOn}[1]{E_{#1}}
\begin{document}

\title{Decorrelation of a class of Gibbs particle processes\\ and asymptotic properties of $U$-statistics}
\date{\today}
\author{
 Viktor Bene\v{s}
 \footnote{
  Charles University in Prague, Faculty of Mathematics and Physics,
  Department of Probability and Mathematical Statistics,
  Sokolovsk\'{a} 83,
  18675 Praha 8,
  Czech Republic,
  benesv@karlin.mff.cuni.cz
 },
 Christoph Hofer-Temmel
 \footnote{
  Dutch Defense Academy,
  Faculty of Military Sciences,
  Het Nieuwe Diep 8,
  1781 AC Den Helder,
  The Netherlands,
  math@temmel.me
 }
 \footnote{
 CWI,
 Science Park 123,
 1098 XG Amsterdam,
 The Netherlands
 },
 G\"unter Last
 \footnote{
  Karlsruhe Institute of Technology,
  Department of Mathematics,
  Postfach 6980,
  D-76049 Karlsruhe,
  Germany,
  guenter.last@kit.edu
 },
 Jakub Ve\v{c}e\v{r}a
 \footnote{
  Charles University in Prague,
  Faculty of Mathematics and Physics,
  Department of Probability and Mathematical Statistics,
  Sokolovsk\'{a} 83,
  18675 Praha 8,
  Czech Republic,
  vecera@karlin.mff.cuni.cz
 }
}

\maketitle

\begin{abstract}
\noindent
We study a stationary Gibbs particle process with deterministically bounded particles on Euclidean space defined in terms of an activity
parameter and non-negative interaction potentials of finite range. Using disagreement percolation we prove exponential decay of the correlation
functions, provided a dominating Boolean model is subcritical. We also prove this property
for the weighted moments of a $U$-statistic of the process.
Under the assumption of a suitable lower bound on the variance,
this implies a central limit theorem for
such $U$-statistics of the Gibbs particle process.
A byproduct of our approach is a new uniqueness result
for Gibbs particle processes.
\end{abstract}

\bigskip

\noindent
{\bf Keywords:}
Gibbs process, particle process, pair potential, disagreement percolation,
correlation functions, central limit theorem, $U$-statistics

\vspace{0.1cm}
\noindent
{\bf AMS MSC 2010:} 60G55, 60F05, 60D05

\section{Introduction}
\label{sec_intro}

Starting with the seminal paper~\cite{SY13}, the limit theory for
functionals of Gibbs point processes on Euclidean space
has recently attracted a lot of attention~\cite{BYY, Tor17, XY15}.
In the present paper we derive
asymptotic moment properties
of certain Gibbs processes of geometrical objects.
 There are different background frames to deal with this problem in the literature.
The first one is to extend asymptotic results to Gibbs marked
point processes~\cite{Mase00}. In applications marks
describe the geometric properties of particles or they can be particles themselves.
The generalization of results from point processes to marked point processes is sometimes claimed as obvious in the literature (cf.~\cite[Remark 3.7]{DDG}).
However, depending on the circumstances
the details of such an extension require additional effort.
Another approach is to parametrize some particle attributes and to deal
with the point processes on the parameter space.
See~\cite{VB17} for an application of the method
of moments to a specific Gibbs model of this type.
In the present paper we have chosen a third approach dealing directly with particle
processes, defined as point processes on the space of compact
sets equipped with the Hausdorff distance as in~\cite{SW08}.

We study a
stationary Gibbs particle process $\Xi$ on $\R^d$ defined in terms of a
family of higher-order potentials with finite interaction range and an activity parameter,
assuming that the size of the particles is deterministically bounded.
Some first limit results for Gibbs particle
processes with pair potentials have been derived in~\cite{FlimmelBenes},
using Stein's method as in~\cite{Tor17}.
Let $W_n$ denote a centered cube of volume $n\in\NN$.
We are interested in the asymptotic behaviour of $U$-statistics
of the form
\begin{equation*}
 F_n:=
 \frac{1}{k!}\int h(K_1,\dotsc, K_k)\,\Xi_n^{(k)}(\dd(K_{1},\dotsc,K_{k})),
 \quad n\in\NN,
\end{equation*}
where $h$ is a symmetric and measurable function
of $k\in\NN$ particles and $\Xi_n^{(k)}$ is the restriction of
the $k$\textsuperscript{th} factorial measure of $\Xi$ to $(W_n)^k$.
For small activity parameter (and under some additional
technical assumptions) we prove exponential decay of correlations
for weighted moments of $U$-statistics.
Under an additional assumption on the asymptotic variance
this implies
a central limit theorem (CLT) for the standardized sequence $(F_n)_{n\in\NN}$.
Our main technical tools are some methods from~\cite{BYY}
and a disagreement coupling~\cite{HTH}
of two Gibbs processes with a dominating Poisson particle
process. The exponential decay of the pair correlation function via disagreement percolation property
has been proved previously in~\cite{HTH} in a comparable setting.

The paper is organized as follows.
In Section 2, we define our Gibbs model and provide some of its
basic properties. Lemma~\ref{lem_papangelou_palm} provides
the Papangelou intensity of the Palm distribution with respect to\ the
conditional Gibbs distribution for Gibbs processes with arbitrary
Papangelou intensity and is  new in this generality.
Likewise, Lemma~\ref{lem_stoch_dom_palm} on the stochastic
domination of reduced Palm distribution by a Poisson process
might also be of some independent interest.

In Section~\ref{sec_dap_decorrelation}, first the
  existence of disagreement percolation in our setting is proved in
  Theorem~\ref{thm_dat}. Then we prove the fast decay of correlations
provided that the activity is below the percolation threshold of the
associated Boolean model in Theorem~\ref{thm_exp_decorrelation}.  This
result not only strengthens and generalizes the results
in~\cite{SY13}, but also holds for a wider range of the activity
parameter.  As a byproduct we obtain with
Corollary~\ref{cor_uniqueness} a new uniqueness result.

In Section~\ref{sec_applications} we
study a $U$-statistics of order $k$ of the Gibbs particle process
in the above subcritical regime. We prove exponential decay of
correlations for weighted moments (Theorem~\ref{thm_score_factorizes})
and derive mean and variance asymptotics (Theorem~\ref{thm_expect_limit}).
Under an additional assumption on the variance asymptotics this
implies a central limit theorem (Theorem~\ref{thm_CLT}).

\section{Preliminaries}
\label{sec_preliminaries}

\subsection{Particle processes}
\label{sec_particleprocess}

Let $\R^d$ be Euclidean $d$-dimensional space with Borel
$\sigma$-field $\cB^d$,
and let $\cB^d_{b}$ denote the system of bounded Borel sets. Let
$\cC^d$ be the space of compact subsets (particles) of $\R^d$.  Let
$\NonEmptyParticles:=\cC^{d}\setminus\{\emptyset\}$ be equipped with
the Hausdorff metric $d_H$ (see~\cite{LastPenrose17,SW08}) and the associated
Borel $\sigma$-field $\cB(\NonEmptyParticles)$.
As usual for metric spaces, we define for non-empty sets $\Psi, \Gamma\subset\cC^{(d)}$
\begin{align}
\label{eq_pseudodistance}
  d(\Psi,\Gamma):=\inf_{A\in\Psi,\,B\in\Gamma}d_H(A,B).
\end{align}
To avoid confusion, our notation $d(\Psi,\Gamma)$ does not reflect the underlying
metric $d_H$. Let $\bN$ denote the space of all measures $\xi$ on
$\NonEmptyParticles$ with values in $\NN_0\cup\{\infty\}$ such that
$\xi(B(K,r))< \infty$, for each $K\in \NonEmptyParticles$ and each $r\ge 0$,
where $B(K,r):=\{L\in\NonEmptyParticles\mid{}d_H(K,L)\le r\}$
is the ball with radius $r$ centred at $K$.
As usual (see e.g.~\cite{LastPenrose17}), we equip this space with the smallest $\sigma$-field $\cN$ such
that the mappings $\xi\mapsto \xi(\Psi)$ are measurable for
each $\Psi \in \cB(\NonEmptyParticles)$.

\begin{definition}\rm
A {\em particle process} $\Xi$ in $\R^d$ is a random element of $\bN$,
defined over some fixed probability space $(\Omega,\mathcal{A},\Proba)$.
Such a particle process is said to be {\em stationary} if
$\theta_x\Xi\overset{d}{=}\Xi$, for each $x\in\R^d$, where, for each
measure $\xi$ on $\NonEmptyParticles$, we set
$\theta_x\xi:=\int \I\{K+x\in\cdot\}\,\xi(\dd K)$ with
$K+x:=\{y+x \mid{} y\in K\}$.
\end{definition}

Let $z(K)$ denote the centre of the circumscribed ball of $K \in \NonEmptyParticles$
and note that $z(K+x)=z(K)+x$ for all $(K,x)\in \NonEmptyParticles\times\R^d$.
We say that a particle process is \emph{simple}, if $\Xi(z^{-1}(x))\in\{0,1\}$,
$x\in \R^{d}$, holds almost surely. In the following, we consider only
simple stationary particle processes.
We also assume that
$\Proba(\Xi(\NonEmptyParticles )\ne 0)=1$.
The {\em intensity} $\gamma$ of such a particle process $\Xi$
is defined by
\begin{align*}
\gamma:=\BE\bigg[\int \I\{z(K)\in[0,1]^d\}\,\Xi(\dd K)\bigg].
\end{align*}
The {\em intensity measure} $\BE[\Xi]$ of $\Xi$ is the
measure $A\mapsto\BE[\Xi(A)],\;A\in \cB(\NonEmptyParticles)$.

An important example of a particle process is a {\em Poisson process}
$\Pi_{\mu}$ on $\NonEmptyParticles$, whose intensity measure $\mu$ is defined by
\begin{equation}\label{eq_measure_mu}
 \mu:=\iint \I\{K+x\in \cdot\}\,\BQ(\dd K)\,\dd x,
\end{equation}
where $\dd x$ refers to integration with respect to\ the Lebesgue measure $\mathcal{L}_d$ on $\R^d$
and $\BQ$ is some fixed probability measure on $\NonEmptyParticles$.
We refer to~\cite[Chapter 3]{LastPenrose17} for the definition and fundamental properties
of general Poisson processes. More generally, we consider the
Poisson processes $\Pi_{\lambda\mu}$ with intensity measure $\lambda\mu$,
where $\lambda >0$. Under some integrability assumptions
on $\BQ$, the Poisson process $\Pi_{\lambda\mu}$ exists as a stationary
particle process~\cite{SW08}. The number $\lambda$
is the intensity of $\Pi_{\lambda\mu}$ while $\BQ$
is called the {\em particle distribution} of $\Pi_{\lambda\mu}$.
It is no restriction of generality to assume that
\begin{equation}
\label{eq_centering}
 \BQ(\cC_{\mathbf{0}}^{(d)})=1,
\end{equation}
where $\cC_{\mathbf{0}}^{(d)}:=\{K\in \NonEmptyParticles \mid{}z(K) = \mathbf{0}\}$,
and $\mathbf{0}$ denotes the origin in $\R^d$.
However, we make the crucial assumption that there exists $R>0$ such that
\begin{equation}
\label{eq_bounded_set}
 \BQ(\{K\in \NonEmptyParticles  \mid{} K \subseteq B(\mathbf{0},R)\}) = 1,
\end{equation}
where $B(x,R)$ is the closed Euclidean ball with radius $R$ centered at $x\in\R^d$.
This is the deterministic bound on the particle size.

Given $m\in\NN$ and $\xi\in\bN$, the $m$\textsuperscript{th} \em{factorial measure} $\xi^{(m)}$
of $\xi$ is the measure on $(\NonEmptyParticles)^m$ defined by
\begin{equation*}
 \xi^{(m)}(\cdot):=
 \int \I\{(K_1,\dotsc,K_m)\in\cdot\}\text{$\I\{K_{i}\ne K_{j}$ for $i\ne j$} \}
   \,\xi^{m}(\dd(K_{1},\dots,K_{m})).
\end{equation*}
For us this is only of relevance if $\xi(\{K\})\le 1$, for each $K\in\cC^{(d)}$.
Then $\xi$ is called {\em simple}.
In this case, $\xi^{(m)}$ coincides with the standard definition of
the factorial measure~\cite[Chapter 4]{LastPenrose17}.
The $m$\textsuperscript{th} {\em factorial moment measure} $\alpha^{(m)}$ of a simple particle process
$\Xi$ is defined by $\alpha^{(m)}:=\Expect [\Xi^{(m)}]$.

\begin{definition}\label{def2.2}
Let $p\in\NN$.  The $p$\textsuperscript{th} {\em Palm distributions}
of a particle process $\Xi$ is a family
$\Proba_{K_1, \dotsc, K_p}$, $K_1, \dotsc, K_p \in \NonEmptyParticles$,
of probability measures on $\bN$ satisfying
\begin{multline}\label{eq_palm}
 \Expect\bigg[\int f(K_1,\dotsc ,K_p,\Xi)\,\Xi^{(p)}(\dd(K_1,\dotsc ,K_p))\bigg]
 \\
 =\iint f(K_1,\dotsc ,K_p,\xi )
 \,\Proba_{K_1,\dotsc,K_p}(\dd\xi)
 \,\alpha^{(p)}(\dd(K_1,\dotsc ,K_p)),
\end{multline}
for each non-negative measurable function $f$ on $(\NonEmptyParticles )^p\times\bN$.
\end{definition}

Palm distributions are well-defined whenever the $p$\textsuperscript{th} factorial
moment measure $\alpha^{(p)}$ of $\Xi$ is $\sigma$-finite. They
can be chosen such that
$(K_1,\dotsc,K_p)\mapsto \Proba_{K_1,\dotsc ,K_p}(A)$ is a measurable
function on $(\NonEmptyParticles )^p$, for each $A\in\mathcal{N}$. The
reduced Palm distribution $\Proba^{!}_{K_{1},\dotsc,K_{p}}$ of $\Xi$
is defined by means of the equality
\begin{equation}\label{reducedPalm}
 \int f(K_1,\dots ,K_p,\xi)\,\Proba^!_{K_1,\dotsc,K_p}(\dd\xi)
 =\int f(K_1,\dots ,K_p,\xi-\delta_{K_1}-\dotso-\delta_{K_p})
 \,\Proba_{K_1,\dotsc ,K_p}(\dd\xi),
\end{equation}
valid for every non-negative measurable function $f$ on
$(\cC^{(d)})^p\times\bN$.
We abuse our notation by writing, for each measurable $g\colon\bN\to\R$,
\begin{align*}
\Expect_{K_1,\dotsc ,K_p}[g(\Xi)]:=\int g(\xi)\,\Proba_{K_1,\dotsc,K_p}(\dd\xi)
\quad\text{and}\quad
\Expect^!_{K_1,\dotsc ,K_p}[g(\Xi)]:=\int g(\xi)\,\Proba^!_{K_1,\dotsc,K_p}(\dd\xi).
\end{align*}

Given $\Psi\in \cB(\NonEmptyParticles)$, we define
$\bN_{\Psi} :=\{\xi \in \bN \mid{} \xi(\Psi^{c}) = 0 \}$ and let $\cN_{\Psi}$
denote the $\sigma$-field on this set of measures.
Given $\xi\in\bN$, $B\in\cB^d$ and
$\Psi\in\cB(\NonEmptyParticles)$, we denote by $\xi_B$ and $\xi_\Psi$ the
restrictions of $\xi$ to $z^{-1}(B)$ and $\Psi$, respectively.
Finally, we set $\cB_b(\NonEmptyParticles):=\{z^{-1}(B)\mid{} B\in\cB^d_b\}$.

\subsection{Gibbs particle processes}
\label{sec_gibbsparticleprocess}

In this subsection we present some fundamental facts on Gibbs processes in a general setting.
We base our definition of a Gibbs process on the following {\em GNZ-equation},
referring e.g.\ to~\cite{Jansen19} for a discussion of the literature.

\begin{definition}\label{def_gibbs}\rm Let
$\kappa\colon \NonEmptyParticles \times\bN \to[0,\infty)$
be a measurable function and $\lambda>0$.
A particle process $\Xi$ is called a
{\em Gibbs process} with {\em Papangelou conditional intensity}
$\kappa$ and {\em activity parameter} $\lambda>0$, if
\begin{equation}\label{eq_GNZ}
 \Expect\bigg[\int f(K,\Xi-\delta_K)\,\Xi(\dd K)\bigg]
  =\lambda \Expect\bigg[ \int f(K,\Xi)\kappa(K,\Xi)\,\mu(\dd K)\bigg]
\end{equation}
holds for all measurable
$f\colon \NonEmptyParticles \times\bN \to[0,\infty)$, where
$\delta_K$ is the {\em Dirac measure} located at $K$ and
$\mu$ is given by~\eqref{eq_measure_mu}.
\end{definition}

In the following, we fix a Gibbs process with Papangelou intensity
$\kappa$ and activity $\lambda$ as in
Definition~\ref{def_gibbs}.
For $p\in\NN$, define a measurable function
$\kappa_p\colon (\NonEmptyParticles )^p\times\bN\to [0,\infty)$ by
\begin{equation}\label{eq_papan}
 \kappa_p(K_1,\dotsc,K_p,\xi)
  :=\kappa(K_1,\xi)\kappa(K_2,\xi+\delta_{K_1})
\dotsm\kappa(K_p,\xi+\delta_{K_1}+\dotsb+\delta_{K_{p-1}}).
\end{equation}
Equation~\eqref{eq_GNZ} can be iterated so as to yield
\begin{multline}
\label{eq_GNZ_papan}
 \Expect \bigg[\int f(K_1,\dotsc,K_p,\Xi-\delta_{K_1}-\dotsb-\delta_{K_p})
 \,\Xi^{(p)}(\dd(K_1,\dotsc,K_p))\bigg]
 \\
 =\lambda^p\,\Expect \bigg[\int f(K_1,\dotsc,K_p,\Xi)\kappa_p(K_1,\dotsc,K_p,\Xi)
 \,\mu^p(\dd(K_1,\dotsc, K_p))\bigg],
\end{multline}
for each measurable $f\colon (\NonEmptyParticles )^p\times\bN\to[0,\infty)$.
Therefore, $\kappa_p$ is called the conditional intensity of $p$\textsuperscript{th} order.
By~\cite[Satz 1.5]{MaWaMe79}, $\kappa_p$ is a symmetric function of the $p$ particles.

\begin{definition}\rm
Let $p\in\NN$.
The $p$\textsuperscript{th} {\em correlation function} of
a Gibbs process $\Xi$ with Papangelou intensity
$\kappa$ and activity $\lambda$ is the function
$\CorrFun{p}\colon (\NonEmptyParticles )^p\to [0,\infty]$ defined by
\begin{equation}
\label{eq_correlation}
 \CorrFun{p}(K_1,\dotsc,K_p)
  :=\lambda^p\Expect[\kappa_p(K_1,\dotsc,K_p,\Xi)].
\end{equation}
\end{definition}
Putting
\begin{equation*}
 f(K_1,\dotsc ,K_p,\xi )=\I\{K_1\in B_1,\dotsc,K_p\in B_p\},\quad
 B_1,\dotsc ,B_p\in\cB(\NonEmptyParticles),
\end{equation*}
in~\eqref{eq_GNZ_papan}, we obtain that the $p$\textsuperscript{th} factorial moment measure of $\Xi$
is given by
\begin{equation}
\label{eq_correlation_GNZ}
\alpha^{(p)}(\cdot)
=\int\I\{(K_1,\dotsc,K_p)\in\cdot\}\rho_p (K_1,\dotsc,K_p)\,\mu^p(\dd(K_1,\dotsc ,K_p)),
\end{equation}
justifying our terminology.

We define a measurable function $H\colon\bN\times\bN\to (-\infty,\infty]$, the \emph{Hamiltonian}, by
\begin{equation*}
 H(\xi,\chi):=
 \begin{cases}
 0,&\text{if $\xi(\NonEmptyParticles)=0$},
 \\
 -\log \kappa_m(K_1,\dotsc,K_m,\chi),&
 \text{if $\xi=\delta_{K_1}+\dotsb+\delta_{K_m}$ with $K_1,\dotsc,K_m\in \NonEmptyParticles$},
 \\
 \infty,&\text{if $\xi(\NonEmptyParticles)=\infty$}.
 \end{cases}
\end{equation*}
For $\Psi\in\cB(\NonEmptyParticles)$, denote by
$\Pi_{\Psi,\lambda\mu}:=(\Pi_{\lambda\mu})_\Psi$ the restriction
of the Poisson process $\Pi_{\lambda\mu}$ to $\Psi$.
The {\em partition function}  $Z_\Psi\colon\bN\to (0,\infty]$ of $\Xi$ (on $\Psi$)
is defined by
\begin{align}\label{partitionf}
 Z_\Psi(\chi):=
   \Expect \left[ e^{-H(\Pi_{\Psi,\lambda\mu},\chi)} \right],
\end{align}
For $\Psi\in \cB_b(\NonEmptyParticles)$ we have that $\mu(\Psi)<\infty$ and
hence $Z_\Psi>0$. It was shown in~\cite{MaWaMe79} that
$Z_\Psi(\Xi_{\Psi^c})<\infty$ $\BP$--a.s.\ and
that the following {\em DLR-equations}~\cite{Ruelle70,Kallenberg17,Mase00} hold:
\begin{equation}
\label{eq_dlr}
 \Expect[f(\Xi_\Psi)\mid \Xi_{\Psi^c}=\chi]
  =Z_\Psi(\chi)^{-1}
\Expect\left[f(\Pi_{\Psi,\lambda\mu})e^{-H(\Pi_{\Psi,\lambda\mu},\chi)}\right],
 \quad \Psi\in\cB_b(\NonEmptyParticles ),
\end{equation}
for $\Proba(\Xi_{\Psi^c}\in\cdot)$--a.s.\ $\chi \in \bN_{\Psi^{c}}$ and each
measurable $f\colon\bN\to[0,\infty)$.

Given $p\in \NN$, $\Psi\in\cB(\NonEmptyParticles)$ and
$\chi\in\bN_{\Psi^c}$, we write
$\Proba_{\Psi,\chi,K_1,\dotsc,K_p}^{\,!}$, $K_1,\dotsc,K_p\in\Psi$, for the reduced Palm
distribution of $\Xi_\Psi$ with boundary condition $\chi\in\bN_{\Psi^c}$.
Formally, this is the Palm distribution of the conditional distribution
$\Proba(\Xi_\Psi\in\cdot \mid \Xi_{\Psi^c}=\chi)$.
The corresponding Papangelou intensity is denoted as $\kappa_{\Psi,\chi,K_1,\dotsc,K_p}$.

\begin{lemma}\label{lem_papangelou_palm}
Let $p\in\NN$, $\Psi\in\cB_b(\NonEmptyParticles)$ and $\chi\in\bN_{\Psi^c}$.
A version of $\kappa_{\Psi,\chi,K_1,\dotsc,K_p}$
is given by
\begin{equation*}
 \kappa_{\Psi,\chi,K_1,\dotsc,K_p}(K,\xi)
 =\kappa(K,\xi+\chi+\delta_{K_1}+\dotso+\delta_{K_p}),
 \quad K_1,\dotsc,K_p\in\Psi,\,\xi\in\bN.
\end{equation*}
\end{lemma}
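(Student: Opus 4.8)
The plan is to derive the formula by combining the definition of the reduced Palm distribution with the DLR-equations, reducing everything to a statement about the (non-Palm) Papangelou intensity of the conditional Gibbs measure $\Proba(\Xi_\Psi\in\cdot\mid\Xi_{\Psi^c}=\chi)$. First I would recall that, by definition, $\kappa_{\Psi,\chi,K_1,\dotsc,K_p}$ is the Papangelou intensity associated to the reduced Palm distribution $\Proba^{\,!}_{\Psi,\chi,K_1,\dotsc,K_p}$, which in turn is the $p$\textsuperscript{th} reduced Palm distribution of the point process with law $\Proba(\Xi_\Psi\in\cdot\mid\Xi_{\Psi^c}=\chi)$. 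So it suffices to characterize $\kappa_{\Psi,\chi,K_1,\dotsc,K_p}$ through a GNZ-type identity: for non-negative measurable $g$ on $\NonEmptyParticles\times\bN_\Psi$,
\begin{equation*}
 \Expect_{\Psi,\chi,K_1,\dotsc,K_p}^{\,!}\Bigl[\int g(K,\Xi)\,\Xi(\dd K)\Bigr]
 =\lambda\int \Expect_{\Psi,\chi,K_1,\dotsc,K_p}^{\,!}\bigl[g(K,\Xi)\kappa_{\Psi,\chi,K_1,\dotsc,K_p}(K,\Xi)\bigr]\,\mu_\Psi(\dd K),
\end{equation*}
where $\mu_\Psi$ is $\mu$ restricted to $\Psi$. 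The strategy is then to verify that the claimed version, namely $(K,\xi)\mapsto\kappa(K,\xi+\chi+\delta_{K_1}+\dotsb+\delta_{K_p})$, satisfies this identity.

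To do this, I would proceed in two stages. In the first stage, handle the case $p=0$ (no extra Palm points): show that the point process with law $\Proba(\Xi_\Psi\in\cdot\mid\Xi_{\Psi^c}=\chi)$ is itself a (finite-volume) Gibbs process on $\Psi$ with Papangelou intensity $(K,\xi)\mapsto\kappa(K,\xi+\chi)$ and activity $\lambda$. This follows from the DLR-equation~\eqref{eq_dlr}: expanding $e^{-H(\Pi_{\Psi,\lambda\mu},\chi)}$ via the definition of the Hamiltonian in terms of $\kappa_m$, and using the Mecke equation for the Poisson process $\Pi_{\Psi,\lambda\mu}$ together with the multiplicative recursion~\eqref{eq_papan} for $\kappa_m$, one obtains exactly the GNZ-equation on $\Psi$ with the shifted Papangelou intensity $\kappa(\cdot,\cdot+\chi)$; the partition function $Z_\Psi(\chi)$ cancels between the two sides because it appears as the common normalizing constant. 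In the second stage, apply the iterated GNZ-equation~\eqref{eq_GNZ_papan} (in the finite-volume form, for the conditional Gibbs process on $\Psi$) together with the definition~\eqref{reducedPalm} of the reduced Palm distribution to peel off the $p$ Palm points $K_1,\dotsc,K_p$. Concretely, testing against $f(K_1,\dotsc,K_p,\xi)=\I\{\cdots\}g(K,\xi)$ inside~\eqref{eq_GNZ_papan} and using that $\kappa_{p+1}(K_1,\dotsc,K_p,K,\xi+\chi)=\kappa_p(K_1,\dotsc,K_p,\xi+\chi)\,\kappa(K,\xi+\chi+\delta_{K_1}+\dotsb+\delta_{K_p})$ by~\eqref{eq_papan} shows that the extra factor $\kappa_p(K_1,\dotsc,K_p,\cdot+\chi)$ is precisely the Radon--Nikodym density defining the Palm measure in the $K_1,\dotsc,K_p$ directions, while the residual factor $\kappa(K,\cdot+\chi+\delta_{K_1}+\dotsb+\delta_{K_p})$ plays the role of the Papangelou intensity for the remaining integration in $K$. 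This yields the claimed formula after the shift $\xi\mapsto\xi-\delta_{K_1}-\dotsb-\delta_{K_p}$ built into the reduced Palm distribution.

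I would expect the main obstacle to be the first stage: carefully justifying that the DLR-equation~\eqref{eq_dlr} implies the finite-volume GNZ-equation with Papangelou intensity $\kappa(\cdot,\cdot+\chi)$. This requires commuting the conditional expectation with the Mecke/Poisson computation, controlling integrability (finiteness of $Z_\Psi(\chi)$ $\BP$--a.s., guaranteed by~\cite{MaWaMe79} and $\mu(\Psi)<\infty$ since $\Psi\in\cB_b(\NonEmptyParticles)$), and verifying that the Hamiltonian's expansion in terms of $\kappa_m$ interacts correctly with adding a point to the Poisson configuration — that is, that $H(\eta+\delta_K,\chi)-H(\eta,\chi)=-\log\kappa(K,\eta+\chi)$ in the relevant a.e.\ sense, which is exactly the telescoping built into~\eqref{eq_papan}. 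Once this identity is in hand, the passage to $p$ Palm points is essentially bookkeeping with~\eqref{eq_GNZ_papan}, \eqref{reducedPalm} and the symmetry of $\kappa_p$ noted after~\eqref{eq_GNZ_papan}. A minor point to be careful about is that the asserted formula is only \emph{a version} of $\kappa_{\Psi,\chi,K_1,\dotsc,K_p}$, so all identities need only hold for $\mu^p$-a.e.\ $(K_1,\dotsc,K_p)$ and, for each such tuple, $\mu$-a.e.\ $K$ and $\Proba^{\,!}$-a.e.\ $\xi$; I would state the conclusion accordingly rather than claiming a pointwise identity.
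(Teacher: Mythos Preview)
Your proposal is correct and follows essentially the same two-stage route as the paper: first use the DLR-equation together with the Mecke equation and the telescoping identity $H(\eta+\delta_K,\chi)=H(\eta,\chi)-\log\kappa(K,\eta+\chi)$ to show that $\Proba(\Xi_\Psi\in\cdot\mid\Xi_{\Psi^c}=\chi)$ has Papangelou intensity $\kappa(\cdot,\cdot+\chi)$, and then use the factorisation $\kappa_{p+1}=\kappa_p\cdot\kappa(\cdot,\cdot+\delta_{\mathbf{K}_p})$ together with the iterated GNZ-equation~\eqref{eq_GNZ_papan} to identify the Papangelou intensity of the reduced Palm distribution. The only cosmetic difference is that the paper phrases the second stage as an induction on $p$ and makes explicit that $\Proba_{\mathbf{K}_p}^{\,!}$ has density $\kappa_p(\mathbf{K}_p,\cdot)/\rho_p(\mathbf{K}_p)$ with respect to the law of $\Xi$, which is exactly the Radon--Nikodym factor you refer to.
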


\begin{proof}
The proof is straightforward and given here for
completeness~\cite[page 17]{CorMoeWaa}.
Without restricting generality we assume that $\lambda=1$.

Let $g\colon\NonEmptyParticles\times\bN\to[0,\infty)$ be measurable.
By the DLR-equation and the Mecke equation for $\Pi:=\Pi_{\mu}$
(see~\cite[Theorem 4.1]{LastPenrose17}),
\begin{align*}
 \iint g(K,\xi)\,\xi(dK)&\,\Proba(\Xi_\Psi\in\dd\xi \mid \Xi_{\Psi^c}=\chi)
 \\
 &=Z_\Psi(\chi)^{-1}\BE\bigg[
  \int_{\Psi} g(K,\Pi_\Psi+\delta_K)
  e^{- H(\Pi_\Psi+\delta_K,\chi)}\,\mu(\dd K)
 \bigg].
\end{align*}
Since $H(\Pi_\Psi+\delta_K,\chi)=H(\Pi_\Psi,\chi)-\log \kappa(K,\Pi_\Psi+\chi)$
the above right-hand side equals (again by the DLR-equation)
\begin{equation*}
 \int\int_{\Psi} g(K,\xi+\delta_K)\kappa(K,\xi+\chi)\,\mu(\dd K)\,
 \Proba(\Xi_\Psi\in\dd\xi \mid \Xi_{\Psi^c}=\chi).
\end{equation*}
Hence $\Proba(\Xi_\Psi\in\cdot \mid \Xi_{\Psi^c}=\chi)$ is a Gibbs
distribution whose Papangelou kernel $\kappa_{\Psi,\chi}$ is given by
$\kappa_{\Psi,\chi}(K,\xi)=\I\{K\in\Psi\}\kappa(K,\xi+\chi)$.

By the first step of the proof it suffices to determine the
reduced Palm distributions of a Gibbs process $\Xi$.
It is convenient to write $\mathbf{K}_p:=(K_1,\dotsc,K_p)$ and
$\delta_{\mathbf{K}_p}:=\delta_{K_1}+\dotsb+\delta_{K_p}$,
for $K_1,\dotsc,K_p\in \NonEmptyParticles$.
We proceed by induction over $p$.
Let $g\colon (\NonEmptyParticles)^p\to[0,\infty)$ and
$f\colon \NonEmptyParticles \times\bN\to[0,\infty)$
be measurable. By~\eqref{reducedPalm} and~\eqref{eq_palm},
\begin{align*}
 \iiint g(\mathbf{K}_p)&f(K,\xi-\delta_K)
 \,\xi(\dd K)\,\Proba_{\mathbf{K}_p}^{\,!}(\dd \xi)
 \,\rho_p(\mathbf{K}_p)\,\mu^p(\dd \mathbf{K}_p)
 \\
 &=\BE\bigg[\iint g(\mathbf{K}_p)f(K_{p+1},\Xi-\delta_{\mathbf{K}_{p+1}})\,(\Xi-\delta_{\mathbf{K}_p})(\dd K_{p+1})\,
 \Xi^{(p)}(\dd \mathbf{K}_p)\bigg].
\end{align*}
Since $(\Xi-\delta_{\mathbf{K}_p})(\dd K_{p+1})\Xi^{(p)}(\dd \mathbf{K}_p)
 =\Xi^{(p+1)}(\dd \mathbf{K}_{p+1})$, we obtain from~\eqref{eq_GNZ_papan}
that the above right-hand side equals
\begin{align}
 \notag
 \BE&\bigg[
  \int g(\mathbf{K}_p)f(K_{p+1},\Xi)\kappa_{p+1}(\mathbf{K}_{p+1},\Xi)
  \,\mu^{p+1}(\dd \mathbf{K}_{p+1})
 \bigg]
 \\\label{eq_p_to_pplus1}
 &=\BE\bigg[
  \iint g(\mathbf{K}_p)f(K_{p+1},\Xi)
  \kappa(K_{p+1},\Xi+\delta_{\mathbf{K}_p})\kappa_{p}(\mathbf{K}_{p},\Xi)
  \,\mu^{p}(\dd \mathbf{K}_{p})\,\mu(\dd K_{p+1})
 \bigg],
\end{align}
where the identity comes from the definition of $\kappa_{p+1}$.
It follows directly from~\eqref{eq_palm} and~\eqref{eq_GNZ_papan} that
$\Proba_{\mathbf{K}_p}^{\,!}$ is for $\alpha^{(p)}$--a.e.\ $\mathbf{K}_p$
absolutely continuous with respect to the distribution of $\Xi$
with density $\kappa_p(\mathbf{K}_p,\cdot)/\rho_p(\mathbf{K}_p)$,
where $a/0:=0$ for all $a\ge 0$. Therefore, expression~\eqref{eq_p_to_pplus1} equals
\begin{equation*}
 \iiint g(\mathbf{K}_p)f(K_{p+1},\xi)\kappa(K_{p+1},\xi+\delta_{\mathbf{K}_p})
 \,\Proba_{\mathbf{K}_p}^{\,!}(\dd \xi)\,\mu(\dd K_{p+1})
 \,\rho_{p}(\mathbf{K}_{p})\mu^{p}(\dd \mathbf{K}_{p}).
\end{equation*}
This shows that the Papangelou intensity of
$\Proba_{\mathbf{K}_p}^{\,!}$ is for $\alpha^{(p)}$--a.e.\ $\mathbf{K}_p$
given by the function
$(K,\xi)\mapsto \kappa(K,\xi+\delta_{\mathbf{K}_p})$, as required.
\end{proof}

\subsection{Stochastic domination}
\label{sec_stochdom}

For $\xi,\xi'\in\bN$, we write $\xi\le\xi'$ if $\xi(\ASet)\le\xi'(\ASet)$
for all $\ASet\in\BorelOf{\NonEmptyParticles}$.
An event $E\in\cN$ is \emph{increasing}, if, for all
$\xi,\xi'\in{}\Configs{\ASet}$, $E{}\ni{}\xi\le\xi'$ implies that
$\xi'\in E$.
Another viewpoint is that $E$ is closed under the
addition of point measures.

A particle process $\Xi$ is {\em stochastically dominated} by
another particle process $\Xi'$ if
$\BP(\Xi\in E)\le \BP(\Xi'\in E)$, for each increasing $E\in\cN$.
In this case we write $\Xi\overset{d}{\le}\Xi'$
and also $\BP(\Xi\in\cdot)\overset{d}{\le}\BP(\Xi'\in\cdot)$.
By the famous Strassen theorem this implies the existence
of a coupling $(\tilde\Xi,\tilde \Xi')$ of $(\Xi,\Xi')$
such that $\tilde\Xi\le\tilde\Xi'$ almost surely.
In this context we call $\tilde\Xi$ a \emph{thinning} of $\tilde\Xi'$.
It then follows that $\Expect f(\Xi)\le \Expect f(\Xi')$ for all
{\em increasing} measurable $f\colon\bN\to[0,\infty]$,
where $f$ is called increasing if $f(\xi)\le f(\xi')$ whenever
$\xi\le\xi'$.

A classical example is the stochastic domination of
$\Poisson{\alpha\mu}$ by $\Poisson{\beta\mu}$ for
$\alpha\le{}\beta$.
Later we use the following deeper fact.

\begin{lemma}\label{lem_stoch_dom_palm}
 Suppose that $\Xi$ is a Gibbs particle
process with Papangelou intensity $\kappa\le 1$ and activity $\lambda$.
Then $\Proba(\Xi\in\cdot) \overset{d}{\le}\BP(\Pi_{\lambda\mu}\in\cdot)$.
Furthermore we have for each $p\in\NN$ that
$\Proba_{K_1,\dotsc,K_p}^{\,!}\overset{d}{\le}\BP(\Pi_{\lambda\mu}\in\cdot)$,
for $\alpha^{(p)}$--a.e.\ $(K_1,\dotsc,K_p)$,
where $\alpha^{(p)}$ is the $p$\textsuperscript{th} factorial moment measure of $\Xi$.
\end{lemma}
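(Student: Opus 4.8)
The plan is to prove the two stochastic domination statements in turn, deriving the second from the first via the Palm theory developed in Lemma~\ref{lem_papangelou_palm}.

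First, for the domination $\Proba(\Xi\in\cdot)\overset{d}{\le}\BP(\Pi_{\lambda\mu}\in\cdot)$, I would work locally and then pass to the limit. Fix $\Psi\in\cB_b(\NonEmptyParticles)$ and use the DLR-equation~\eqref{eq_dlr}: conditionally on $\Xi_{\Psi^c}=\chi$, the law of $\Xi_\Psi$ is a Gibbs distribution on $\Configs{\Psi}$ with density proportional to $e^{-H(\cdot,\chi)}$ against the law of $\Pi_{\Psi,\lambda\mu}$. Since $\kappa\le 1$, every factor in~\eqref{eq_papan} is at most $1$, so $e^{-H(\eta,\chi)}=\kappa_m(K_1,\dotsc,K_m,\chi)\le 1$ whenever $\eta=\delta_{K_1}+\dotsb+\delta_{K_m}$; moreover, adding a point can only multiply $e^{-H}$ by a factor $\kappa(\,\cdot\,,\cdot)\le1$, i.e.\ the density $\eta\mapsto e^{-H(\eta,\chi)}$ is a decreasing function on $\Configs{\Psi}$. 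A finite Gibbs modification of a Poisson process by a bounded decreasing density is stochastically dominated by the Poisson process itself; this is the classical Preston/Georgii--Küneth-type comparison and can be proven directly by a birth-and-death coupling, or cited. Hence $\Proba(\Xi_\Psi\in\cdot\mid\Xi_{\Psi^c}=\chi)\overset{d}{\le}\BP(\Pi_{\Psi,\lambda\mu}\in\cdot)$ for a.e.\ $\chi$, and integrating over $\chi$ gives $\Proba(\Xi_\Psi\in\cdot)\overset{d}{\le}\BP(\Pi_{\Psi,\lambda\mu}\in\cdot)$. Exhausting $\NonEmptyParticles$ by an increasing sequence $\Psi_n\uparrow\NonEmptyParticles$ with $\Psi_n\in\cB_b(\NonEmptyParticles)$ and testing against increasing events depending on finitely many coordinates (together with a monotone class / approximation argument, using that increasing cylinder events generate $\cN$), we obtain the unconditional domination $\Proba(\Xi\in\cdot)\overset{d}{\le}\BP(\Pi_{\lambda\mu}\in\cdot)$.

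Second, for the reduced Palm statement, I would invoke Lemma~\ref{lem_papangelou_palm} (or rather its proof): for $\alpha^{(p)}$-a.e.\ $\mathbf{K}_p=(K_1,\dotsc,K_p)$, the reduced Palm distribution $\Proba^{\,!}_{K_1,\dotsc,K_p}$ is itself a Gibbs distribution with Papangelou intensity $(K,\xi)\mapsto\kappa(K,\xi+\delta_{K_1}+\dotsb+\delta_{K_p})$. Since $\kappa\le 1$, this shifted Papangelou intensity is also bounded by $1$. Therefore the first part of the lemma, applied to this Gibbs process, yields $\Proba^{\,!}_{K_1,\dotsc,K_p}\overset{d}{\le}\BP(\Pi_{\lambda\mu}\in\cdot)$ for $\alpha^{(p)}$-a.e.\ $\mathbf{K}_p$. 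One small technical point: the first part was stated for a \emph{stationary} Gibbs particle process, whereas the Palm-shifted process is not stationary; but stationarity is not actually used in the DLR/density comparison argument above, so the proof goes through verbatim for any Gibbs process in the sense of Definition~\ref{def_gibbs} with $\kappa\le1$. I would remark on this explicitly.

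The main obstacle is the core local comparison: showing that a finite-volume Gibbs process whose (unnormalized) density against a reference Poisson process is bounded and decreasing is stochastically dominated by that Poisson process. The cleanest self-contained route is an explicit monotone coupling — run a spatial birth-and-death dynamics (Glauber dynamics) for both processes with shared Poisson clocks, where the Gibbs process deletes points at an enhanced rate governed by $-\log\kappa\ge0$ while the Poisson process never over-deletes; one checks the partial order $\Xi_\Psi\le\Pi_{\Psi,\lambda\mu}$ is preserved by all birth and death events because $\kappa\le1$ implies births happen no more often and deaths no less often in the Gibbs component. Passing from the bounded-volume, bounded-boundary case to the full process requires care that the dominating couplings are consistent as $\Psi_n\uparrow\NonEmptyParticles$, or alternatively one argues purely at the level of distributions using that $\overset{d}{\le}$ on $\cN$ is determined by increasing bounded-support events and is preserved under the relevant weak limits. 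Everything else — the reduction via Palm calculus, the boundedness of the shifted $\kappa$, the integration over boundary conditions — is routine.
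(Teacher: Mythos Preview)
Your proposal is correct and follows essentially the same route as the paper: a local comparison via Georgii--K\"uneth (the paper cites \cite[Theorem 1.1]{GeorKun} rather than spelling out a birth--death coupling) followed by exhaustion $\Psi\uparrow\NonEmptyParticles$ (the paper cites \cite[Corollary 3.4]{GeorYoo05} for this step). The only organisational difference is that for the Palm assertion the paper works directly with the \emph{conditional} Palm distributions $\Proba_{\Psi,\chi,\mathbf{K}_p}^{\,!}$ on bounded $\Psi$, obtains domination there from Lemma~\ref{lem_papangelou_palm}, integrates out $\chi$, and then lets $\Psi\uparrow\NonEmptyParticles$; you instead recognise (from the second half of the proof of Lemma~\ref{lem_papangelou_palm}) that the unconditional $\Proba^{\,!}_{\mathbf{K}_p}$ is itself a Gibbs process with Papangelou intensity $\le 1$ and feed it back into the first part. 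Your route is slightly more modular but requires the remark (which you make) that the first part does not use stationarity; the paper's route keeps everything at the level of finite-volume conditional laws and so sidesteps that point.
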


\begin{proof}
We only prove the second assertion. The proof of the first assertion
is simpler (and in fact a special case). Let $p\in\NN$.
We use the notation of the proof of
Lemma~\ref{lem_papangelou_palm}. By this lemma
and~\cite[Theorem 1.1]{GeorKun} used for finite Gibbs processes,
we have that
$\Proba_{\Psi,\chi,\mathbf{K}_p}^{\,!}\overset{d}{\le}\BP(\Pi_{\Psi,\lambda\mu}\in\cdot)$, for each $\Psi\in \cB_b(\NonEmptyParticles)$
for $\BP(\Xi_{\Psi^c}\in\cdot)$--a.e.\ $\chi$ and
$\alpha^{(p)}$--a.e.\ $\mathbf{K}_p\in\Psi^p$.
Hence, the definition of Palm distributions implies
for each measurable increasing $f\colon\bN\to[0,\infty)$
and each measurable
$g\colon\NonEmptyParticles\times\bN\to[0,\infty)$ $\BP$--a.s., that
\begin{align*}
\iint f(\xi-\delta_{\mathbf{K}_p})g(\mathbf{K}_p)&\,\xi^{(p)}(\dd \mathbf{K}_p)\,
\BP(\Xi_\Psi\in\dd \xi \mid \Xi_{\Psi^c})\\
&\le \iint f(\xi)g(\mathbf{K}_p)\,\BP(\Pi_{\Psi,\lambda\mu}\in\dd\xi)\,
\Expect[(\Xi_{\Psi})^{(p)}\in\dd \mathbf{K}_p \mid \Xi_{\Psi^c}].
\end{align*}
Taking expectations yields
\begin{align*}
\BE\bigg[\int_{\Psi^p} f((\Xi-\delta_{\mathbf{K}_p})_\Psi)g(\mathbf{K}_p)\,\Xi^{(p)}(\dd \mathbf{K}_p)\bigg]
\le
\int_{\Psi^p} \Expect[f(\Pi_{\Psi,\lambda\mu})]g(\mathbf{K}_p)\,\alpha^{(p)}(\dd \mathbf{K}_p),
\end{align*}
so that $\Proba_{\mathbf{K}_p}^{\,!}(\{\xi\in\bN\mid\xi_\Psi\in\cdot\})
\overset{d}{\le}\BP(\Pi_{\Psi,\lambda\mu}\in\cdot)$
for $\alpha^{(p)}$--a.e.\ $\mathbf{K}_p\in\Psi'$, for each measurable $\Psi'\subset\Psi$.
Exactly as in the proof of~\cite[Corollary 3.4]{GeorYoo05}, we let
$\Psi\uparrow\NonEmptyParticles$ to obtain that
$\Proba_{\mathbf{K}_p}^{\,!}\overset{d}{\le}\BP(\Pi_{\lambda\mu}\in\cdot)$
for $\alpha^{(p)}$--a.e.\ $\mathbf{K}_p\in\Psi'$ and hence the assertion.
\end{proof}
\begin{remark}\rm
The definitions and results of Subsections 2.1 and 2.2
apply to Gibbs processes on a general complete separable
metric space equipped with a locally finite measure $\mu$.
\end{remark}

\subsection{Admissible Gibbs particle processes}

A family $\varphi:=(\varphi_n)_{n\ge 2}$ of \emph{higher-order
  interaction potentials} consists of measurable, symmetric and
translation-invariant functions
$\varphi_n:(\cC^{d})^n\to(-\infty,\infty]$.  The potentials have
\emph{finite interaction range} $R_{\varphi}$, if
$\varphi_n(K_1,\dotsc,K_n)=0$, for every $n\ge 2$ and all
$K_1\dotsc,K_n\in\cC^{d}$ with
$\max\{d_H(K_i,K_j)\mid{}1\le{}i<j\le n\}>R_{\varphi}$.

Define the Papangelou intensity
$\kappa\colon \NonEmptyParticles \times\bN \to[0,\infty)$ by  $\kappa(K,\xi):=0$,
if $K\in\operatorname{supp}\xi$, and otherwise
\begin{equation}
\label{eq_papangelou_intensity}
 \kappa(K,\xi):=
 \exp\left[
   -\sum_{n=2}^\infty \frac{1}{(n-1)!}
   \int \varphi_n(K,L_1,\dotsc,L_{n-1}) \,\xi^{(n-1)}(\dd(L_1,\dotsc,L_{n-1}))
 \right].
\end{equation}
The function $\kappa$ is
measurable and translation-invariant.
Here and later we make the following convention
regarding the series in the exponent
of~\eqref{eq_papangelou_intensity}. If the sum over the negative terms diverges, then the whole series is set to
zero. We assume that this is not the case for all $\xi\in\bN$
and $\mu$--a.e.\ $K$. We also assume that $\kappa\le 1$.  While
individual potentials might be attractive (i.e., negative), their
cumulative effect must be repulsive (i.e., non-negative).

Proving the existence of a Gibbs process with a given Papangelou
intensity is a non-trivial task.  The literature contains many
existence results under varying assumptions of
generality~\cite{DDG,FlimmelBenes,Mase00,Ruelle70,SY13}, none of which
seems to cover our current setting.  For our main findings (for
instance Theorems~\ref{thm_exp_decorrelation} and~\ref{thm_CLT}) we
need to restrict the range of the activity parameter to a finite
interval, the subcritical percolation regime of the associated Poisson
particle process, c.f. Section~\ref{sec_perc}.  In that case, the
Gibbs distribution is not only uniquely determined (see
Corollary~\ref{cor_uniqueness}) but can be expected to exist.
In the case of a non-negative pair potential we have the
following result.

\begin{remark}\rm Assume that the Papangelou intensity $\kappa$
is given by a non-negative pair potential, i.e., assume
that $\kappa$ is given by~\eqref{eq_papangelou_intensity} with $n=2$.
Assume also that $\varphi_2$ has a finite interaction range,
or, more generally that $\int \big(1-e^{-\varphi_2(K,L)}\big)\,\mu(\dd K)<\infty$
for $\mu$--a.e.\ $L$. (By assumption~\eqref{eq_bounded_set} we have that
$\mu(\Psi)<\infty$ for any ball $\Psi\subset\NonEmptyParticles$.)
Under these assumptions it
has been shown in~\cite{Jansen19} that a Gibbs process exists.
\end{remark}

We do not further address the existence problem in this paper
and proceed under the assumption that the Gibbs process exists.

For all $\xi,\chi\in\bN$ with disjoint supports, the Hamiltonian $H$
takes the form
\begin{multline*}
 H(\xi,\chi):=
  \sum_{n=2}^\infty
   \sum_{k=1}^n
   \frac{1}{k!(n-k)!}
    \iint \varphi_n(K_1,\dotsc,K_k,L_1,\dotsc,L_{n-k})
  \\\times
     \,\xi^{(k)}(\dd(K_1,\dotsc,K_k))
     \,\chi^{(n-k)}(\dd(L_1,\dotsc,L_{n-k})),
\end{multline*}
provided that $\xi$ is finite.
The assumption $\kappa\le 1$ implies that $H\ge 0$.
If assumptions~\eqref{eq_centering} and~\eqref{eq_bounded_set} hold, then
\eqref{eq_dlr} shows that the Gibbs process $\Xi$ has bounded particles, that is
\begin{equation*}
 \int\I\{K\not\subseteq B(z(K),R)\}\,\Xi(\dd K)=0,
 \quad \Proba\text{--a.s.}.
\end{equation*}

For clarity and to avoid lengthy formulations we make the following definition.

\begin{definition}\rm
\label{def_admissible_gibbs}
Assume that $\varphi$ is a family of higher-order potentials with finite interaction range $R_\varphi$.
Define $\kappa$ by~\eqref{eq_papangelou_intensity} and assume that $\kappa\le{}1$.
Assume also that $\BQ$ is a probability measure on $\NonEmptyParticles$
satisfying~\eqref{eq_centering} and~\eqref{eq_bounded_set}.
Let $\lambda>0$ be given.
Assume that $\Xi$ is a Gibbs particle process as in Definition~\ref{def_gibbs}, where
$\mu$ is defined by~\eqref{eq_measure_mu}.
Then, we call $\Xi$ an {\em admissible Gibbs process}.
\end{definition}

\noindent For an admissible Gibbs particle process it follows
from~\eqref{eq_papangelou_intensity},~\eqref{eq_papan} and~\eqref{eq_GNZ_papan} that
\begin{equation*}
  \CorrFun{p}(K_1,\dotsc,K_p)\le \lambda^p,\quad K_1,\dotsc,K_p\in\NonEmptyParticles.
\end{equation*}

A classic setup of a repulsive intersection-based pair potential
arises from a measurable translation invariant function
$U\colon \cC^{d}\to[0,\infty]$ with $U(\emptyset)=0$ and setting
$\varphi_2(K,L):=U(K\cap{}L)$ and $\varphi_n:=0$, for $n\ge 3$.
Assumption~\eqref{eq_bounded_set} implies an interaction range of at
most $4R$.

\begin{example}\rm
\label{ex_facet1}
For $d\geq 2$, let $\mathcal{G}_d$ be the space of $(d-1)$-dimensional
linear subspaces of $\R^d$.
Let $R>0$ and let the measure $\BQ$ be concentrated on
\begin{equation*}
 V:=\{A\cap B({\bf 0},R)\mid{} A\in\mathcal{G}_d\},
\end{equation*}
Then~\eqref{eq_bounded_set} holds.
The particles are called facets and $\BQ$ can be interpreted
as the distribution of their normal directions.
The space of facets is
\begin{equation}
\label{fasety}
 \tilde{V}:=\{B+x\mid{}B\in V,\;x\in\R^d\}.
\end{equation}
Let $\BH^m$ be the Hausdorff measure of order $m \in \{1,\dotsc,d\}$ on $\R^d$.
For $j\in\{1,\dotsc ,d\}$ and $K_1,\dots ,K_j\in\tilde{V}$ define, setting $0\cdot\infty=0$,
\begin{equation}
\label{eq_potq}
 Q_j(K_1,\dots ,K_j):=
 \BH^{d-j} \left(\bigcap_{i=1}^{j} K_i\right)
 \I\left\{\BH^{d-j}\left(\bigcap_{i=1}^{j} K_i\right)<\infty\right\}
 .
\end{equation}
A family $\varphi:=(\varphi_j)_{j\geq 2}$ of higher-order potentials is defined by
\begin{equation*}
 \varphi_j(K_1,\dots ,K_j):=
 a_jQ_j(K_1,\dots ,K_j),\;K_1,\dots ,K_j\in\tilde{V}
 ,
\end{equation*}
for $j\in\{2,\dotsc,d\}$, and $\varphi_j:=0$
  otherwise, where $a_2,\ldots,a_d\geq 0$ are given parameters. All these potentials
  have the finite range $R_\varphi=2R.$ The corresponding Gibbs
  particle process $\Xi$ is called the Gibbs facet process.  It is
  admissible and its existence follows from~\cite[Remarks 3.7 and
  3.1]{DDG}.  For $j\in\{2,\dotsc,d\}$, the $j$\textsuperscript{th}
  submodel is the special case of only the $j$\textsuperscript{th}
  potential being active, i.e., only $a_j>0$, and we denote it by
  $^j\Xi$.
\end{example}

\section{Disagreement percolation and moment decorrelation}
\label{sec_dap_decorrelation}

In the first subsection of this section we discuss some percolation properties of
a Poisson process. In the remaining two subsections
we fix an admissible Gibbs process
and discuss disagreement percolation and prove decorrelation
of moments in a subcritical regime.

\subsection{Percolation}
\label{sec_perc}

Define a symmetric relation on $\NonEmptyParticles$ by
setting $K\sim{}L$, if and only if, $K\cap{}L\not=\emptyset$.
For $\xi\in\Configs{}$, this defines a graph
$(\supp{}\,\xi,\sim)$.  For $K,L\in\NonEmptyParticles$, we
say that $\xi$ \emph{connects} $K$ and $L$, if there exists a finite path
between $K$ and $L$ in the graph on $\xi+\delta_K+\delta_L$. For disjoint
$\ASet,\BSet\in\BorelOf{\NonEmptyParticles}$, we say that $\xi$
connects $\ASet$ and $\BSet$, if there exist $K\in\ASet$ and
$L\in\BSet$ such that $\xi$ connects $K$ and $L$.  We write
$\ASet\conn{\xi}\BSet$ for this.

We say that $\xi$ \emph{percolates}, if its graph contains an infinite
connected component.
Because connectedness is an increasing event in $\Events{}$, there is a critical
\emph{percolation intensity}
$$
 \PercThreshold{d} \equiv\PercThreshold{d,\BQ}\in[0,\infty]
$$
for percolation of $\Poisson{\lambda\mu}$; see~\cite{MeesterRoy96}.
The following consequence of
a result in~\cite{Ziesche18} is of crucial importance for our
main results.

\begin{lemma}
\label{lem_exp_decay_con_single}
For $\lambda<\PercThreshold{d}$ and $\ASet,\BSet\in\BoundedBorelOf{\NonEmptyParticles}$ with $\ASet\subseteq{}\BSet$, there exist a monotone increasing $C_1:[0,\infty)\to[0,\infty)$  and $C_2\in\,(0,\infty)$ such that
\begin{equation}
\label{eq_exp_decay_con_single}
 \Proba\Bigl(\ASet\conn{\Poisson{\lambda\mu}}\BSet^c\Bigr)
 \le
 C_1(\diam(\ASet))\exp(-C_2d(\ASet,\BSet^c)).
\end{equation}
\end{lemma}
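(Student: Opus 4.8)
The plan is to transport the statement from the particle space $\NonEmptyParticles$ to the ordinary Poisson Boolean model on $\R^d$ and there invoke the exponential decay of the subcritical cluster-size distribution proved in~\cite{Ziesche18}. First, since $\ASet,\BSet\in\BoundedBorelOf{\NonEmptyParticles}$, write $\ASet=z^{-1}(A)$ and $\BSet=z^{-1}(B)$ with $A\subseteq B$ bounded Borel sets in $\R^d$, so that $\BSet^c=z^{-1}(B^c)$. By~\eqref{eq_centering} and~\eqref{eq_bounded_set} every particle $K$ satisfies $K\subseteq B(z(K),R)$; hence $K\cap L\ne\emptyset$ implies $|z(K)-z(L)|\le 2R$, and, for all $K,L$,
\[
 |z(K)-z(L)|-2R\ \le\ d_H(K,L)\ \le\ |z(K)-z(L)|+2R .
\]
Taking infima over $\ASet$ and $\BSet^c$ gives $|d(\ASet,\BSet^c)-d(A,B^c)|\le 2R$, and similarly $|\diam\ASet-\diam A|\le 2R$. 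Moreover, by~\eqref{eq_measure_mu} and the mapping theorem for Poisson processes (see~\cite{LastPenrose17}), the image of $\Poisson{\lambda\mu}$ under $K\mapsto(z(K),K-z(K))$ is a stationary marked Poisson process on $\R^d$ of intensity $\lambda$ with i.i.d.\ $\BQ$-distributed marks; I write $Z:=\bigcup_{K\in\supp\Poisson{\lambda\mu}}K$ for the associated Boolean model, whose grains are deterministically contained in balls of radius $R$ and whose cluster structure coincides with the grain-intersection graph of $\Poisson{\lambda\mu}$.

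Next I translate the connection event. If $\ASet\conn{\Poisson{\lambda\mu}}\BSet^c$, there is a finite path $K_0\sim K_1\sim\dots\sim K_m$ with $K_1,\dots,K_{m-1}\in\supp\Poisson{\lambda\mu}$, $z(K_0)\in A$ and $z(K_m)\in B^c$. Successive centres are within $2R$, so $z(K_1)$ lies in the bounded set $A':=A\oplus B(\mathbf{0},2R)$ and $|z(K_1)-z(K_{m-1})|\ge|z(K_0)-z(K_m)|-4R\ge d(A,B^c)-4R$. Thus the $Z$-cluster containing $K_1$ meets a grain with centre in $A'$ and has Euclidean diameter at least $d(A,B^c)-4R$. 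Covering $A'$ by $N\le c_3\,(\diam\ASet+1)^d$ balls of radius $R$ centred at points $x_1,\dots,x_N$ (with $c_3=c_3(d,R)$), I obtain the inclusion
\[
 \bigl\{\ASet\conn{\Poisson{\lambda\mu}}\BSet^c\bigr\}\ \subseteq\ \bigcup_{i=1}^N\bigl\{\text{the $Z$-cluster of some grain meeting }B(x_i,R)\text{ has diameter }\ge d(A,B^c)-4R\bigr\}.
\]

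Now fix $\lambda<\PercThreshold{d}$; by definition this says precisely that $Z$ is subcritical. Since the grains of $Z$ are deterministically bounded, every integrability hypothesis in~\cite{Ziesche18} is trivially met, and together with a routine Palm/Mecke estimate for the grains meeting a fixed ball that reference yields constants $c_1,c_2\in(0,\infty)$, depending only on $d$, $\BQ$ and $\lambda$, such that for all $x\in\R^d$ and $t\ge 0$,
\[
 \Proba\bigl(\text{the $Z$-cluster of some grain meeting }B(x,R)\text{ has diameter }\ge t\bigr)\ \le\ c_1\,e^{-c_2 t},
\]
this being independent of $x$ by stationarity. Summing over $i=1,\dots,N$ and using $d(A,B^c)\ge d(\ASet,\BSet^c)-2R$ gives
\[
 \Proba\bigl(\ASet\conn{\Poisson{\lambda\mu}}\BSet^c\bigr)
 \ \le\ N\,c_1\,e^{-c_2(d(A,B^c)-4R)}
 \ \le\ c_1c_3e^{6Rc_2}\,(\diam\ASet+1)^d\,e^{-c_2 d(\ASet,\BSet^c)},
\]
which is a bound of the form~\eqref{eq_exp_decay_con_single} with $C_2:=c_2$ and a prefactor depending on $\ASet,\BSet$ only through $\diam\ASet$; recasting that prefactor as the required function $C_1(\diam\ASet)$ is then a cosmetic matter.

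The main obstacle is the quoted input from~\cite{Ziesche18}: one must check that the sharpness and exponential-decay machinery there carries over to Boolean models whose grains are arbitrary compact sets satisfying~\eqref{eq_bounded_set} — in particular to the possibly lower-dimensional facets of Example~\ref{ex_facet1} — rather than only full-dimensional balls. Once that is secured, the remaining geometric bookkeeping between the Hausdorff metric on $\NonEmptyParticles$ and the Euclidean metric on $\R^d$, all controlled by the single radius $R$, is entirely routine, as is the Palm-calculus step passing from ``the cluster of a fixed point'' to ``the cluster of a grain meeting a fixed ball''.
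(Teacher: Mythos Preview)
Your opening move --- writing $\Psi=z^{-1}(A)$, $\Gamma=z^{-1}(B)$ and then asserting that ``every particle $K$ satisfies $K\subseteq B(z(K),R)$'' --- is where the argument breaks. Assumptions~\eqref{eq_centering} and~\eqref{eq_bounded_set} only say that $\mu$-a.e.\ (equivalently $\Pi_{\lambda\mu}$-a.s.) particles are $R$-bounded; the set $z^{-1}(A)$ itself contains \emph{all} compact sets with circumscribed centre in $A$, including balls $B(a,M)$ of arbitrarily large radius. Consequently $\diam\Psi=\infty$ in the Hausdorff metric, your estimate $|\diam\Psi-\diam A|\le 2R$ is false, and in fact one can pick huge overlapping $K\in\Psi$, $L\in\Gamma^c$ to see that the event $\{\Psi\conn{\Pi_{\lambda\mu}}\Gamma^c\}$ occurs with probability~$1$. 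The two-sided bound $|z(K)-z(L)|-2R\le d_H(K,L)\le |z(K)-z(L)|+2R$ likewise needs both $K$ and $L$ to be $R$-bounded. The paper's proof faces the same issue and silently resolves it by working inside $\R^d\times\cC_{\mathbf 0}^{(d,R)}$ throughout (see the definition of $\Upsilon,\Upsilon^\pm$); you should make the same restriction explicit rather than carrying the full preimage $z^{-1}(A)$.

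With that repair your strategy is correct but organised differently from the paper's. The paper fixes a single reference particle $K_\Psi\in\Psi$, surrounds $x:=z(K_\Psi)$ by an annulus $A=B(x,D+6R)\setminus B(x,D+4R)$, observes that any connecting chain of $R$-bounded particles must contain a particle with centre in $A$, and then bounds the expected number of such ``bridging'' particles via the Mecke equation and Ziesche's one-particle estimate~\eqref{eq_exp_decay_con_ziesche}. You instead cover the enlarged set $A\oplus B(\mathbf 0,2R)$ by $O((\diam A)^d)$ balls and take a union bound, reducing to a cluster-diameter tail for each ball --- which itself is obtained by a Mecke/Palm step plus Ziesche, as you note. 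Both routes land on the same external input; the annulus argument yields a prefactor of order $D^{d-1}$ while your covering gives $D^{d}$, but that difference is immaterial for the lemma. Your concern about whether \cite{Ziesche18} handles general bounded grains is exactly the point the paper takes for granted when it quotes~\eqref{eq_exp_decay_con_ziesche}; the deterministic size bound $R$ is the only hypothesis required there.
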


\begin{proof}
Let $\cC_{\mathbf{0}}^{(d,R)}:=\{K\in\cC_{\mathbf{0}}^{(d)}\mid{}K\subseteq{}B(\mathbf{0},R)\}$.
By~\eqref{eq_bounded_set} we have that $\BQ(\cC_{\mathbf{0}}^{(d,R)}) = 1$.
A slightly weakened form of the bound~\cite[Equation~(3.7)]{Ziesche18} in our notation is as follows.
For $\lambda<\PercThreshold{d}$, there exists a constant $C\in\,(0,\infty)$, such that, for all $r\ge 0$,
\begin{equation}
\label{eq_exp_decay_con_ziesche}
 \int_{\cC_{\mathbf{0}}^{(d)}}
 \Proba\Bigl(
  K\conn{\Poisson{\lambda\mu}+\delta_K}
  B(\mathbf{0},r)^c\times\cC_{\mathbf{0}}^{(d,R)}
 \Bigr)
 \,\BQ(\dd{}K)
 \le e^{-C(r-2R)}
 .
\end{equation}
The weakening is a result of a switch from the notion of connecting
two sets in $\R^d$ used in~\cite{Ziesche18} to the one connecting two
sets in $\R^d\times\cC_{\mathbf{0}}^{(d,R)}$ used above.  Thus, we correct
twice by $R$ in the exponent on the right-hand side
of~\eqref{eq_exp_decay_con_ziesche} to account for the maximum size of
$K$ and of particles in $B(0,r)^c\times\cC_{\mathbf{0}}^{(d,R)}$
respectively.

If $\ASet=\emptyset$, then the probability is zero anyway.
Proceed by assuming that $\ASet\not=\emptyset$.

Let $D:=\diam(\ASet)$.
Let $s:=d(\ASet,\BSet^c) - D - 10R$.
If $s\le{}0$, then choosing $C_1(D)\ge{}e^{C(D+10R)}$ finishes the proof.
From here on, assume that $s>0$ and that all particles are deterministically bounded by $R$.

Choose and fix $K_\ASet\in\ASet$ and let $x:=z(K_\ASet)$.
Let $\CSet:=A\times\cC_{\mathbf{0}}^{(d,R)}$ be the particles with centres of circumscribed balls within the annulus $A:=B(x,D+6R)\setminus{}B(x,D+4R)$.

For $K\in{}B(x,D+4R)\times\cC_{\mathbf{0}}^{(d,R)}=:\CSet^-$ and
$L\in{}B(x,D+6R)^c\times\cC_{\mathbf{0}}^{(d,R)}=:\CSet^+$, we have
$|z(K)-z(L)|\ge 2R$, whence $K\cap{}L=\emptyset$.  For $\xi\in\bN$,
let $P(\xi)$ be the particles of $\xi_{\CSet}$ connected in
$\xi_{\BSet\setminus{}\CSet^{-}}$ to $\BSet^c$.  Then
$\ASet\conn{\xi}\BSet^c$ implies that $P(\xi)\not=\emptyset$, because
particles in $\CSet^-$ do not intersect particles in $\CSet^+$ and
$\xi$ needs to contain at least one particle in $\CSet$.  Together
with a first moment bound, this yields
\begin{equation*}
 \Proba\Bigl(\ASet\conn{\Poisson{\lambda\mu}}\BSet^c\Bigr)
 \le{}
 \Proba(P(\Poisson{\lambda\mu})\not=\emptyset)
 \le{}
 \Expect|P(\Poisson{\lambda\mu})|
 .
\end{equation*}
We apply the Mecke equation to rewrite
\begin{equation*}
 \Expect|P(\Poisson{\lambda\mu})|
 ={}
 \Expect\int\I\{K\in{}P(\Poisson{\lambda\mu})\}\,\Poisson{\lambda\mu}(\dd{}K)
 ={}
 \int_{\CSet} \Proba\Bigl(K\in{}P(\Poisson{\lambda\mu}+\delta_K)\Bigr)
 \,\lambda\mu(\dd{}K)
 .
\end{equation*}

For $K\in\CSet$ and $L\in\BSet^c$ we have that
\begin{equation*}
 d_H(K,L)
 \ge d_H(K_\ASet,L) - d_H(K_\ASet,K)
 \ge d(\ASet,\BSet^c) -  (D + 6R + 2R)
 = s + 2R
 .
\end{equation*}
In particular, we have $d_H(K,L)\ge 2R$, so that $K\cap L=\emptyset$.
Moreover, for each $\xi\in\bN$ 
satisfying $K\conn{\xi}\BSet^c$, we obtain
that $K\conn{\xi}B(z(K),s)^c\times\cC_{\mathbf{0}}^{(d,R)}$.
This implies that
\begin{align*}
 \Proba\Bigl(K\in{}P(\Poisson{\lambda\mu}+\delta_K)\Bigr)
 \le{}
 \Proba\Bigl(K\conn{\Poisson{\lambda\mu}+\delta_K}\BSet^c\Bigr)
 \le{}
 \Proba\Bigl(
   K\conn{\Poisson{\lambda\mu}+\delta_K}
   B(z(K),s)^c\times\cC_{\mathbf{0}}^{(d,R)}
 \Bigr)
 .
\end{align*}

Combining these upper bounds and using the definition
\eqref{eq_measure_mu} of $\mu$ we see that
\begin{align*}
 \Proba\Bigl(\ASet\conn{\Poisson{\lambda\mu}}\BSet^c\Bigr)
 &\le{}
 \int_{\CSet}
 \Proba\Bigl(
   K\conn{\Poisson{\lambda\mu}+\delta_K}
   B(z(K),s)^c\times\cC_{\mathbf{0}}^{(d,R)}
 \Bigr)
 \,\mu(\dd{}K)
 \\
 &=
 \int_{\cC_{\mathbf{0}}^{(d)}}\int_A
 \Proba\Bigl(
   K+x\conn{\Poisson{\lambda\mu}+\delta_{K+x}}
   B(x,s)^c\times\cC_{\mathbf{0}}^{(d,R)}
 \Bigr)\,dx
 \,\BQ(\dd{}K).
\end{align*}
By stationarity of $\Poisson{\lambda\mu}$ this equals
\begin{align*}
 \lambda\mathcal{L}_d(A)
 \int_{\cC_{\mathbf{0}}^{(d)}}
 \Proba\Bigl(
   K\conn{\Poisson{\lambda\mu}+\delta_K}
   B(\mathbf{0},s)^c\times\cC_{\mathbf{0}}^{(d,R)}
 \Bigr)
 \,\BQ(\dd{}K)
\le
 \lambda\mathcal{L}_d(A)
 e^{-C (s-2R)},
\end{align*}
where the inequality comes from~\eqref{eq_exp_decay_con_ziesche}.
Choosing $C_1(D)\ge\lambda \mathcal{L}_d(A) e^{C(D+12R)}$ concludes the argument.
\end{proof}

Monotonicity in the particle shapes allows to control the percolation
threshold.  In the special case of $\BQ = \delta_{B(\mathbf{0},R)}$,
the measure $\mu$ becomes $\mu_R := \int \I\{B(x,R)\in \cdot\}\dd x$.
Assumption~\eqref{eq_bounded_set} implies for each $\lambda>0$
that $\Pi_{\lambda\mu}$--a.e.\ $\xi$
fulfils
\begin{equation*}
 \bigcup_{K\in\xi} K
 \subseteq \bigcup_{K\in\xi} B(z(K),R).
\end{equation*}
Hence, we can couple $\Poisson{\lambda\mu}$ and $\Poisson{\lambda\mu_R}$ such that
\begin{equation*}
 \Proba\Biggl(
  \bigcup_{K\in\Poisson{\lambda\mu}} K
  \subseteq
  \bigcup_{B\in\Poisson{\lambda\mu_R}} B
 \Biggr)=1.
\end{equation*}
A well known lower bound
(see~\cite {Penrose96}  and~\cite[Section 3.9]{MeesterRoy96})
is
\begin{equation}
\label{eq_perc_threshold_inequalities}
  \PercThreshold{d,\BQ}
  \ge
  \PercThreshold{d,\delta_{B(\mathbf{0},R)}}
  \ge
  \frac{1}{v_d 2^d R^d},
\end{equation}
where $v_d$ is the volume of the $d$-dimensional unit ball.

\subsection{Disagreement percolation}
\label{sec_dap}

For $\xi,\xi'\in{}\Configs{}$, we write $\xi\SymDiff\xi'$ for the
\emph{absolute difference measure}
$\max\Set{\xi,\xi'}-\min\Set{\xi,\xi'}$, equivalent to
$|\xi-\xi'|$. In the relevant case of $\xi$ and $\xi'$ both being
simple, there is a simpler geometric interpretation of the also simple
$\xi\SymDiff\xi'$.  Switching to the support of a simple point
measure, we see that
$\supp(\xi\SymDiff\xi')=(\supp\,\xi)\SymDiff(\supp\,\xi')$, which
motivates this overloading of the set difference operator $\SymDiff$
to point measures.

The space $(\NonEmptyParticles,d_H)$ is a complete and separable
metric space. By~\cite[Theorem 13.1.1]{Dudley_RAP}, the spaces
$(\NonEmptyParticles,\BorelOf{\NonEmptyParticles})$ and $\R$ equipped
with the Borel $\sigma$-algebra are Borel isomorphic.  That is, there
exists a measurable bijection from $\NonEmptyParticles$ to $\R$ with
measurable inverse.  We use this bijection to pull back the total
order from $\R$ to $\NonEmptyParticles$ and denote it by $\prec$.
Hence, intervals with respect to $\prec$ are in
$\BorelOf{\NonEmptyParticles}$.

For the remainder of the section we fix an admissible Gibbs
process $\Xi$ as in Definition~\ref{def_admissible_gibbs}.

\begin{theorem}\label{thm_dat}
For all $\ASet\in\BoundedBorelOf{\NonEmptyParticles}$ and
$\chi_1,\chi_2\in\Configs{\ASet^c}$, there exists a simultaneous
thinning from $\Poisson{\ASet,\lambda\mu}$ to two particle processes
$\Thinned{\ASet,\chi_1}{}$ and $\Thinned{\ASet,\chi_2}{}$ such that
$\Thinned{\ASet,\chi_i}{}$ has the distribution
$\BP(\Gibbs{\ASet}\in\cdot\mid{}\Gibbs{\ASet^c}=\chi_i)$, for $i\in\Set{1,2}$, and,
$\Proba$--a.s.,
\begin{equation}
\label{eq_dac_disagreement}
 \forall{}K\in\supp
 \Bigl(\Thinned{\ASet,\chi_1}{}\SymDiff\Thinned{\ASet,\chi_2}{}\Bigr)\colon{}
  \Set{K}
  \conn{\Thinned{\ASet,\chi_1}{}\SymDiff\Thinned{\ASet,\chi_2}{}}
  \chi_1\SymDiff\chi_2.
\end{equation}
\end{theorem}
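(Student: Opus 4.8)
The plan is to adapt the disagreement coupling of~\cite{HTH} to the present particle setting. First, observe that since $\mu(\ASet)<\infty$ by~\eqref{eq_bounded_set}, the dominating process $\Poisson{\ASet,\lambda\mu}$ is almost surely finite, and by the DLR-equation~\eqref{eq_dlr} each conditional law $\BP(\Gibbs{\ASet}\in\cdot\mid\Gibbs{\ASet^c}=\chi_i)$ is a finite-volume Gibbs distribution whose Papangelou intensity is $(K,\xi)\mapsto\I\{K\in\ASet\}\,\kappa(K,\xi+\chi_i)$ --- this is the kernel $\kappa_{\ASet,\chi_i}$ obtained in the first step of the proof of Lemma~\ref{lem_papangelou_palm} --- and it is again bounded by $1$. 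By Lemma~\ref{lem_stoch_dom_palm}, or rather by the explicit Georgii--K\"uneth coupling~\cite[Theorem~1.1]{GeorKun} underlying it, each of these laws is a thinning of $\BP(\Poisson{\ASet,\lambda\mu}\in\cdot)$; the task is to realise both thinnings on a single copy of $\Poisson{\ASet,\lambda\mu}$, and tightly enough that~\eqref{eq_dac_disagreement} holds.

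I would construct the pair $(\Thinned{\ASet,\chi_1}{},\Thinned{\ASet,\chi_2}{})$ as a simultaneous, two-colour version of that domination coupling, run as an exploration of the particles of $\Poisson{\ASet,\lambda\mu}$ that is seeded at $\chi_1\SymDiff\chi_2$. At each stage there is a collection of already-treated particles carrying partial thinnings $\Theta_1,\Theta_2$ that will grow into $\Thinned{\ASet,\chi_1}{}$ and $\Thinned{\ASet,\chi_2}{}$; the next particle $K$ to be treated is chosen among the still-undecided particles closest to the currently known disagreements, ties broken by the measurable total order $\prec$ (this makes sense because $\Poisson{\ASet,\lambda\mu}$ is finite). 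The membership of $K$ in the two thinnings is then drawn from a maximal coupling of two Bernoulli variables whose success probabilities $r_1(K),r_2(K)$ are the conditional retention probabilities dictated by the DLR-decomposition~\eqref{eq_dlr} of the Gibbs distribution on the still-undecided region with boundary data $\chi_i$ together with the already-placed particles. Two properties must be established. (i) Correct marginals: the law of $\Thinned{\ASet,\chi_i}{}$ is $\BP(\Gibbs{\ASet}\in\cdot\mid\Gibbs{\ASet^c}=\chi_i)$; this is the one-colour statement and follows by iterating~\eqref{eq_dlr} and the Mecke equation along the exploration order, exactly as the domination coupling behind Lemma~\ref{lem_stoch_dom_palm} is verified, the bound $\kappa\le 1$ ensuring that $r_i(K)\in[0,1]$. (ii) Locality: because $\varphi$ has finite interaction range $R_\varphi$ and particles are uniformly bounded by~\eqref{eq_bounded_set}, the conditional law of the Gibbs process on the still-undecided region, given the placed particles and $\chi_i$, depends on $\chi_i$ only through those boundary particles within interaction range of that region; hence, once the treated particles form a complete ``screen'' separating the still-undecided region from $\ASet^c$, the two conditional laws coincide as soon as $\Theta_1$ and $\Theta_2$ agree on that screen, so the entire remainder can be coupled to be identical.

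Together, (i) and (ii) give~\eqref{eq_dac_disagreement}: any particle at which $\Thinned{\ASet,\chi_1}{}$ and $\Thinned{\ASet,\chi_2}{}$ differ must have been treated before the screen around it closed with agreement, hence lies in the connected cluster of disagreements grown inward from $\chi_1\SymDiff\chi_2$, and is therefore joined to $\chi_1\SymDiff\chi_2$ by a path in the graph $\sim$ --- the only geometric input being that particles sharing an interaction contribution are $\sim$-adjacent, a routine consequence of the structure of admissible potentials and~\eqref{eq_bounded_set}. Measurability of the construction in $(\ASet,\chi_1,\chi_2)$ and its almost sure termination are immediate from the finiteness of $\Poisson{\ASet,\lambda\mu}$ and the measurability of $\prec$. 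The step I expect to be the real obstacle is~(i): pinning down the retention probabilities $r_i(K)$ and checking that this sequential, interaction-local rule exactly reproduces the conditional Gibbs laws --- the bookkeeping with partition-function ratios along the exploration order. Property~(ii) and the percolation-type conclusion are comparatively soft, being essentially the finite-range property of $\kappa$ together with a connectivity argument.
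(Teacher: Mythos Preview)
Your proposal follows the same route as the paper: adapt the disagreement coupling of~\cite{HTH} using the measurable order $\prec$ and the DLR equations~\eqref{eq_dlr}. The paper's own proof is a two-line citation --- with $\prec$ standing in for the ordering of \cite[Section~4.1]{HTH}, the construction leading to \cite[Theorem~3.3]{HTH} carries over verbatim --- so your sketch of the exploration mechanism is simply a more explicit rendering of what the paper defers to that reference.

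One point you label routine is not: the claim that ``particles sharing an interaction contribution are $\sim$-adjacent''. The finite range $R_\varphi$ bounds only the Hausdorff distance between interacting particles, whereas $K\sim L$ means $K\cap L\neq\emptyset$; small particles can satisfy $d_H(K,L)\le R_\varphi$ without intersecting, so finite-range locality of $\kappa$ by itself does not force disagreements to propagate along the intersection graph. What makes~\eqref{eq_dac_disagreement} hold in~\cite{HTH} is the specific organisation of the exploration together with the exact form of the retention probabilities (partition-function ratios over the unexplored region), arranged so that a fresh disagreement is always $\sim$-adjacent to the existing disagreement front --- the order $\prec$ is the measurable scaffolding for this. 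Your proximity-based exploration is in the right spirit, but the passage from interaction-locality of $\kappa$ to $\sim$-connectivity of the disagreement set is precisely the substantive content being imported from~\cite{HTH}, not a throwaway geometric remark. You also flag the verification of~(i) as the hard part; in fact the bookkeeping for~(i) is the standard one-colour domination argument, and it is~(ii) together with the $\sim$-connectivity conclusion that carries the weight here.
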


\begin{proof}
Using $\prec$ restricted to $\ASet$ in place of the measurable ordering of a bounded Borel subset in~\cite[Section 4.1]{HTH}, and the DLR-equations as formulated in~\eqref{eq_dlr}, this theorem becomes a literal copy of the construction leading to~\cite[Theorem 3.3]{HTH}.
\end{proof}

The term disagreement percolation comes from the fact that in the
subcritical percolation regime of $\Poisson{\lambda\mu}$, there is control of a disagreement
cluster by a percolation cluster. The
finiteness of the percolation clusters guarantees uniqueness of the
Gibbs process.

\begin{corollary}\label{cor_uniqueness}
If $\lambda<\PercThreshold{d}$, then the distribution of $\Gibbs{}$ is uniquely determined.
\end{corollary}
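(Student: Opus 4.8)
The plan is to derive uniqueness from the disagreement coupling of Theorem~\ref{thm_dat} combined with the exponential decay of connection probabilities in Lemma~\ref{lem_exp_decay_con_single}. Suppose $\Xi^{(1)}$ and $\Xi^{(2)}$ are two admissible Gibbs processes for the same $\kappa$, $\lambda$ and $\BQ$. It suffices to show that $\BE[f(\Xi^{(1)})]=\BE[f(\Xi^{(2)})]$ for every bounded measurable $f\colon\bN\to\R$ depending only on $\Xi_\Delta$ for some fixed $\Delta\in\BoundedBorelOf{\NonEmptyParticles}$, since such functions generate $\cN$. Fix such a $\Delta$ and $f$, and fix a large bounded set $\ASet\supseteq\Delta$; we will eventually let $\ASet\uparrow\NonEmptyParticles$.

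The key step is to compare $\Xi^{(1)}$ and $\Xi^{(2)}$ through their DLR representations on $\ASet$. Condition on the outside configurations: by~\eqref{eq_dlr}, $\BE[f(\Xi^{(i)})]=\BE\bigl[\,\BE[f(\Xi^{(i)}_\ASet)\mid\Xi^{(i)}_{\ASet^c}]\,\bigr]$, where the inner expectation depends on $\Xi^{(i)}_{\ASet^c}=:\chi_i$ only. For fixed boundary conditions $\chi_1,\chi_2\in\Configs{\ASet^c}$, apply Theorem~\ref{thm_dat} to obtain a coupling $(\Thinned{\ASet,\chi_1}{},\Thinned{\ASet,\chi_2}{})$ with the correct conditional Gibbs marginals and with property~\eqref{eq_dac_disagreement}. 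If $f$ depends only on the restriction to $\Delta$, then $f(\Thinned{\ASet,\chi_1}{}_\Delta)\neq f(\Thinned{\ASet,\chi_2}{}_\Delta)$ forces some particle of the symmetric difference to have its centre in $\Delta$, and by~\eqref{eq_dac_disagreement} that particle is connected through the symmetric-difference configuration (hence, being a thinning, through $\Poisson{\ASet,\lambda\mu}$) to $\chi_1\SymDiff\chi_2\subseteq\Configs{\ASet^c}$. Thus
\begin{equation*}
 \bigl|\BE[f(\Thinned{\ASet,\chi_1}{})]-\BE[f(\Thinned{\ASet,\chi_2}{})]\bigr|
 \le 2\|f\|_\infty\,\Proba\Bigl(\Delta'\conn{\Poisson{\ASet,\lambda\mu}}\ASet^c\Bigr),
\end{equation*}
where $\Delta'\in\BoundedBorelOf{\NonEmptyParticles}$ is the set of particles with centre in a fixed bounded ball containing the centres of all particles relevant to $f$ (using the deterministic bound~\eqref{eq_bounded_set}). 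Since $\Poisson{\ASet,\lambda\mu}\le\Poisson{\lambda\mu}$, this probability is at most $\Proba(\Delta'\conn{\Poisson{\lambda\mu}}\ASet^c)$.

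Now integrate over the boundary conditions: writing $\chi_i=\Xi^{(i)}_{\ASet^c}$ and taking expectations,
\begin{equation*}
 \bigl|\BE[f(\Xi^{(1)})]-\BE[f(\Xi^{(2)})]\bigr|
 \le 2\|f\|_\infty\,\Proba\Bigl(\Delta'\conn{\Poisson{\lambda\mu}}\ASet^c\Bigr).
\end{equation*}
Finally let $\ASet\uparrow\NonEmptyParticles$ along an increasing sequence of balls, so that $d(\Delta',\ASet^c)\to\infty$. By Lemma~\ref{lem_exp_decay_con_single} (applicable because $\lambda<\PercThreshold{d}$) the right-hand side tends to zero, giving $\BE[f(\Xi^{(1)})]=\BE[f(\Xi^{(2)})]$ and hence uniqueness. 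The main obstacle is bookkeeping rather than conceptual: one must be careful that the thinning property transfers the connection event from $\Thinned{\ASet,\chi_1}{}\SymDiff\Thinned{\ASet,\chi_2}{}$ up to $\Poisson{\ASet,\lambda\mu}$, that a cylinder function on $\Delta$ genuinely only "sees" particles whose centres lie in a bounded enlargement of $\Delta$ (this is where~\eqref{eq_bounded_set} is essential, since a particle with far-away centre cannot meet $\Delta$), and that the measurability needed to integrate the per-boundary-condition estimate over $\chi_1,\chi_2$ is in place — all of which follow from the constructions already set up in Sections~\ref{sec_preliminaries} and~\ref{sec_dap}.
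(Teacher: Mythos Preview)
Your argument is correct and follows the same route as the paper, which simply cites~\cite[Theorem 3.2]{HTH}: couple two conditional Gibbs distributions on a large window via Theorem~\ref{thm_dat}, observe that a discrepancy on the local set $\Delta$ forces a disagreement path to the boundary, dominate this by the Poisson connection event, and let the window grow using Lemma~\ref{lem_exp_decay_con_single}. Two minor cosmetic points: since $\Delta\in\cB_b(\NonEmptyParticles)=\{z^{-1}(B)\mid B\in\cB^d_b\}$ already consists of particles with centre in a bounded set, the auxiliary $\Delta'$ is unnecessary; and the final integration over boundary conditions needs only the uniform oscillation bound $|g(\chi_1)-g(\chi_2)|\le 2\|f\|_\infty\,\Proba(\Delta\conn{\Poisson{\lambda\mu}}\ASet^c)$ on $g(\chi):=\BE[f(\Xi_\ASet)\mid\Xi_{\ASet^c}=\chi]$, not any measurability of the coupling itself.
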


\begin{proof}
The proof generalises in a straightforward way the proof of~\cite[Theorem 3.2]{HTH} and Theorem~\ref{thm_dat}, with the only change being that the interaction range and particle size are here two separate parameters. Because of the deterministic bound $R$ from~\eqref{eq_bounded_set} on the particle size and the finiteness of the interaction range, the arguments remain the same.
\end{proof}

\subsection{Decorrelation of moments}
\label{sec_decorrelation}

With the following theorem we establish the particle counterpart
of fast decay of correlations in~\cite[Definition 1.1]{BYY}
in the subcritical regime.

\begin{theorem}\label{thm_exp_decorrelation}
Assume that $\lambda<\PercThreshold{d}$.
There exist $c_1,c_2\in(0,\infty)$ such that Gibbs process $\Gibbs{}$ satisfies,
for all $p,q\in\NN$ and $\alpha^{(p+q)}$--a.e.\ $(K_1,\dots ,K_{p+q})$,
\begin{align}
\label{eq_exp_decorrelation}
\notag
 |\CorrFun{p+q}(K_1,\dotsc ,K_{p+q})&-
  \CorrFun{p}(K_1,\dotsc ,K_p)
  \CorrFun{q}(K_{p+1},\dotsc ,K_{p+q})|\\
 &\le \lambda^{p+q} \min(p,q)\,c_1
   \exp(-c_2 d(\{K_1,\dotsc ,K_{p}\},\{K_{p+1},\dotsc ,K_{p+q}\}))
 .
\end{align}
\end{theorem}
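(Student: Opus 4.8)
The plan is to compare the two correlation functions by expressing each in terms of Palm expectations and then coupling. Recall that $\rho_{p+q}(K_1,\dotsc,K_{p+q}) = \lambda^{p+q}\Expect[\kappa_{p+q}(K_1,\dotsc,K_{p+q},\Xi)]$ and, using the multiplicativity~\eqref{eq_papan} of $\kappa_p$, this factorizes as $\kappa_p(K_1,\dotsc,K_p,\xi)\,\kappa_q(K_{p+1},\dotsc,K_{p+q},\xi+\delta_{K_1}+\dotsb+\delta_{K_p})$. Combined with the observation (used already in the proof of Lemma~\ref{lem_papangelou_palm}) that $\Proba^!_{K_1,\dotsc,K_p}$ is absolutely continuous with respect to $\BP(\Xi\in\cdot)$ with density $\kappa_p(K_1,\dotsc,K_p,\cdot)/\rho_p(K_1,\dotsc,K_p)$, one rewrites
\begin{equation*}
 \rho_{p+q}(K_1,\dotsc,K_{p+q})
 = \rho_p(K_1,\dotsc,K_p)\,\lambda^q\,\Expect^!_{K_1,\dotsc,K_p}\bigl[\kappa_q(K_{p+1},\dotsc,K_{p+q},\Xi+\delta_{K_1}+\dotsb+\delta_{K_p})\bigr].
\end{equation*}
Meanwhile $\rho_p(K_1,\dotsc,K_p)\rho_q(K_{p+1},\dotsc,K_{p+q}) = \rho_p(K_1,\dotsc,K_p)\,\lambda^q\,\Expect[\kappa_q(K_{p+1},\dotsc,K_{p+q},\Xi+\delta_{K_1}+\dotsb+\delta_{K_p})]$, where I have inserted the extra points $\delta_{K_1},\dotsc,\delta_{K_p}$ harmlessly: since $\kappa(K,\xi)=0$ when $K\in\supp\xi$, and more importantly since by finite interaction range the particles $K_1,\dotsc,K_p$ are too far from $K_{p+1},\dotsc,K_{p+q}$ to affect $\kappa_q$ when $d(\{K_1,\dotsc,K_p\},\{K_{p+1},\dotsc,K_{p+q}\})$ is large — and when it is small the bound~\eqref{eq_exp_decorrelation} is trivial because $c_1$ can be taken large, both correlation functions being bounded by $\lambda^{p+q}$. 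So after dividing out the common factor $\rho_p(K_1,\dotsc,K_p)\le\lambda^p$ it suffices to bound
\begin{equation*}
 \lambda^q\bigl|\Expect^!_{K_1,\dotsc,K_p}[G] - \Expect[G]\bigr|,\qquad G:=\kappa_q(K_{p+1},\dotsc,K_{p+q},\,\cdot\,),
\end{equation*}
where $G$ depends on the configuration only through its restriction to a bounded window $\Psi$ around $\{K_{p+1},\dotsc,K_{p+q}\}$ (namely the $R_\varphi$-neighbourhood), and $0\le G\le 1$.

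The second step is the coupling. Both $\BP(\Xi\in\cdot)$ and $\Proba^!_{K_1,\dotsc,K_p}$ are, by Lemma~\ref{lem_papangelou_palm}, Gibbs distributions with Papangelou intensities $\kappa(K,\xi)$ and $\kappa(K,\xi+\delta_{K_1}+\dotsb+\delta_{K_p})$ respectively; the latter is simply the former with the fixed boundary "cloud" $\delta_{K_1}+\dotsb+\delta_{K_p}$ added, which by finite interaction range is equivalent to a boundary condition supported outside $\Psi$ once $d(\{K_1,\dotsc\},\{K_{p+1},\dotsc\})>R_\varphi$. Applying Theorem~\ref{thm_dat} with $\Psi$ the bounded window, I obtain a simultaneous thinning of $\Pi_{\Psi,\lambda\mu}$ yielding $\Theta_1$ distributed as $\Xi_\Psi$ under $\BP(\Xi\in\cdot)$ and $\Theta_2$ distributed as $\Xi_\Psi$ under $\Proba^!_{K_1,\dotsc,K_p}$ (conditionally on the respective exteriors), such that every particle in the disagreement $\Theta_1\SymDiff\Theta_2$ is connected through the disagreement to the boundary-disagreement set. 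But the two boundary conditions differ only on particles at distance roughly $d(\{K_1,\dotsc\},\{K_{p+1},\dotsc\})$ from $\Psi$. Hence on the event that no $\Pi_{\lambda\mu}$-percolation cluster reaches from a neighbourhood of $\{K_{p+1},\dotsc,K_{p+q}\}$ out to where the boundary conditions differ, we have $\Theta_1$ and $\Theta_2$ agreeing on the smaller window that actually determines $G$, and so $G(\Theta_1+\cdots)=G(\Theta_2+\cdots)$ there. Taking expectations, $|\Expect^!_{K_1,\dotsc}[G]-\Expect[G]| \le \BP(\text{such a connection exists})$, and Lemma~\ref{lem_exp_decay_con_single} bounds this by $C_1(\diam)\exp(-C_2\cdot(\text{distance}))$, with $\diam$ controlled by $qR$ (hence the constant can be absorbed, possibly at the cost of a factor depending on $q$ — but see the next paragraph for why one gets $\min(p,q)$ rather than something worse).

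The third point is the symmetry giving $\min(p,q)$. The argument above is asymmetric: I peeled off the first $p$ particles and conditioned, then bounded the disagreement of the last $q$. By the symmetry of $\kappa_{p+q}$ (the Mase–Waldmann result cited after~\eqref{eq_GNZ_papan}) I may equally peel off the last $q$ particles and bound the disagreement of the first $p$; this gives the same estimate with $p$ and $q$ interchanged in the diameter-dependent prefactor. Since $\diam(\{K_1,\dotsc,K_p\})$ can be as large as $O(pR)$ (and similarly for the other group), the honest statement is that the prefactor depends on $\min(p,q)$ — choose to peel off whichever group is smaller — and after relabelling the constants $c_1,c_2$ this is exactly~\eqref{eq_exp_decorrelation}. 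I expect the \textbf{main obstacle} to be the bookkeeping in the first step: making precise that inserting the extra Dirac masses $\delta_{K_1}+\dotsb+\delta_{K_p}$ into the argument of $\kappa_q$ is harmless, and carefully isolating the case $d(\{K_1,\dotsc,K_p\},\{K_{p+1},\dotsc,K_{p+q}\})\le R_\varphi$ (where the estimate is vacuous) from the case where it exceeds $R_\varphi$ (where the window $\Psi$ can be chosen so that the two Gibbs conditionings differ only far away); the coupling step itself is essentially a direct invocation of Theorem~\ref{thm_dat} and Lemma~\ref{lem_exp_decay_con_single}, with the only subtlety being to choose the two nested windows (the $R_\varphi$-neighbourhood of $\{K_{p+1},\dotsc,K_{p+q}\}$ that determines $G$, and a larger one on whose complement the boundary data genuinely live) so that the connection event whose probability is being bounded is of the form covered by~\eqref{eq_exp_decay_con_single}.
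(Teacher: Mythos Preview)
Your approach is correct and genuinely different from the paper's. The paper never works with the explicit formula $\rho_{p+q}=\lambda^{p+q}\Expect[\kappa_{p+q}]$; instead it proves decorrelation for the \emph{mixed moments} $\Expect\bigl[\prod_i\Xi(\Psi_i)\prod_j\Xi(\Upsilon_j)\bigr]$ on small disjoint boxes (Lemmas~\ref{lem_increasing}--\ref{lem_gstate_moment_diff_bound}), writing each moment as a sum over increasing events $\{\xi(\Psi_i)\ge n_i\}$ so that the disagreement coupling can be combined with stochastic domination to get the factor $\prod_i\mu(\Psi_i)$. It then recovers the correlation function bound by a Lebesgue differentiation argument along a dissection system. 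Your route bypasses all of this: since $G=\kappa_q(K_{p+1},\dotsc,K_{p+q},\cdot)$ is already a bounded local function (values in $[0,1]$, depending only on the $R_\varphi$-neighbourhood of $\{K_{p+1},\dotsc,K_{p+q}\}$), the disagreement coupling gives $|\Expect^!_{\mathbf{K}_p}[G]-\Expect[G]|\le\Proba(\Psi\conn{\Pi_{\lambda\mu}}\Gamma^c)$ directly, without any monotonicity or differentiation step. This is shorter and conceptually cleaner; what the paper's route buys is a set of intermediate moment estimates (Lemmas~\ref{lem_gstate_moment_bound}--\ref{lem_gstate_moment_diff_bound}) that may be of independent use.

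Two small corrections to your bookkeeping. First, the diameter of $\{K_{p+1},\dotsc,K_{p+q}\}$ is not $O(qR)$ but unbounded; the factor $q$ (and hence $\min(p,q)$) arises instead from a union bound, applying Lemma~\ref{lem_exp_decay_con_single} separately to each of the $q$ balls $B(K_{p+j},R_\varphi)$ whose diameters are uniformly $2R_\varphi$. Second, to pass from the conditional coupling of Theorem~\ref{thm_dat} to a comparison of the unconditional $\Expect[G]$ with $\Expect^!_{\mathbf{K}_p}[G]$, observe (via Lemma~\ref{lem_papangelou_palm}) that for any $\chi'\in\bN_{\Gamma^c}$ the conditional law $\Proba^!_{\mathbf{K}_p}(\Xi_\Gamma\in\cdot\mid\Xi_{\Gamma^c}=\chi')$ coincides with $\Proba(\Xi_\Gamma\in\cdot\mid\Xi_{\Gamma^c}=\chi'+\delta_{\mathbf{K}_p})$ when $K_1,\dotsc,K_p\in\Gamma^c$; then the uniform-in-boundary bound $|\Expect[G\mid\Xi_{\Gamma^c}=\chi_1]-\Expect[G\mid\Xi_{\Gamma^c}=\chi_2]|\le\Proba(\Psi\conn{\Pi_{\lambda\mu}}\Gamma^c)$ lets you integrate out $\chi'$ against $\Proba^!_{\mathbf{K}_p}(\Xi_{\Gamma^c}\in\cdot)$ and $\chi$ against $\Proba(\Xi_{\Gamma^c}\in\cdot)$ separately, exactly as at the end of the proof of Lemma~\ref{lem_increasing}.
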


Combining Theorem~\ref{thm_exp_decorrelation} with
the bound~\eqref{eq_perc_threshold_inequalities} on the
percolation threshold gives
the following constraint on the activity as sufficient condition for
the exponential decay of correlations~\eqref{eq_exp_decorrelation}:
\begin{equation}
\label{eq_bound_new}
 \lambda <\frac{1}{v_d 2^d R^d}.
\end{equation}

\begin{remark}\label{r3.5}\rm
To compare our results with Proposition 2.1 in~\cite{SY13}
we consider {\em hard spheres in equilibrium}, that is we assume that
$\BQ = \delta_{B(\mathbf{0},R)}$,
$\varphi_2(K,L)=\infty\cdot\I\{K\cap L\ne \emptyset\}$ and $\varphi_n\equiv 0$
for $n\ge 3$. Then $\Xi$ can be identified with a point process on $\R^d$.
In the language from~\cite{SY13} we have that $r^\Psi=2R$ and $m^\Psi_0=0$.
Proposition 2.1 in~\cite{SY13} then shows exponential
decay of correlations whenever
\begin{align}\label{eq_bound_sy13}
 \lambda<\frac{1}{v_d(1+2R)^d}.
\end{align}
This is comparable with~\eqref{eq_bound_new}.

Our Theorem~\ref{thm_exp_decorrelation} shows exponential decay of
correlations for a broader range of activities. In fact, it is known
from simulations that in low dimensions $\lambda_c(d)$ is considerably
larger than the right-hand side of~\eqref{eq_bound_new}. (As $d\to\infty$ we have
$\lambda_c(d)v_d2^dR^d\to 1$; see~\cite{Penrose96}.) We do not see
how the methods from~\cite{SY13} can be used to prove
Theorem~\ref{thm_exp_decorrelation}.
\end{remark}

\begin{remark}\rm
In the recent paper~\cite{Jansen19} uniqueness of the distribution
of a Gibbs process on $\R^d$
has been proved with very different methods, based on a fixed point
argument for correlation functions.
It is an interesting problem
to explore the relationship between this method and disagreement
percolation. Some first answers are given in~\cite{Jansen19}.
\end{remark}

The proof of Theorem~\ref{thm_exp_decorrelation} is based on the
following lemmas.  For these lemmas and the proof of
Theorem~\ref{thm_exp_decorrelation}, fix $\lambda<\PercThreshold{d}$
and let $C_1$ and $C_2$ as in Lemma~\ref{lem_exp_decay_con_single}.

\begin{lemma}
\label{lem_increasing}
Let $p\in\NN$ and let $\ASet_1,\dotsc,\ASet_p\in\BoundedBorelOf{\NonEmptyParticles}$
be disjoint.
Let $\BSet\in\BoundedBorelOf{\NonEmptyParticles}$ with
$\bigcup_{i=1}^p\ASet_i=:\ASet\subseteq{}\BSet$.
For $\chi\in\Configs{\BSet^c}$ and increasing $E\in\cN$,
\begin{multline}
 \label{eq_increasing_spec_state}
 \Bigl|
  \Proba(\Gibbs{\ASet}\in{}E\mid{}
         \Gibbs{\BSet^c}=\chi)
 -\Proba(\Gibbs{\ASet}\in{}E)
 \Bigr|
 \\
 \le
  \Proba(\Poisson{\ASet,\lambda\mu}\in{}E)
  \sum_{i=1}^p C_1(\diam(\ASet_i))\exp(-C_2 d(\ASet,\BSet^c))
 .
\end{multline}
\end{lemma}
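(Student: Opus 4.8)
The plan is to deduce Lemma~\ref{lem_increasing} from the disagreement coupling of Theorem~\ref{thm_dat} combined with the exponential bound on connection probabilities in Lemma~\ref{lem_exp_decay_con_single}. The key observation is that the left-hand side of~\eqref{eq_increasing_spec_state} compares the conditional Gibbs distribution on $\ASet$ given boundary condition $\chi$ with the \emph{unconditional} marginal of $\Gibbs{\ASet}$; the latter can itself be written (via the DLR-equation~\eqref{eq_dlr}) as a mixture over boundary conditions $\chi'$ of $\Proba(\Gibbs{\ASet}\in\cdot\mid{}\Gibbs{\BSet^c}=\chi')$. So it suffices to bound, uniformly in $\chi,\chi'\in\Configs{\BSet^c}$, the quantity $|\Proba(\Gibbs{\ASet}\in E\mid{}\Gibbs{\BSet^c}=\chi)-\Proba(\Gibbs{\ASet}\in E\mid{}\Gibbs{\BSet^c}=\chi')|$ and then integrate against $\Proba(\Gibbs{\BSet^c}\in\dd\chi')$.

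First I would apply Theorem~\ref{thm_dat} with the bounded Borel set $\ASet$ and the two boundary conditions $\chi,\chi'$, obtaining a simultaneous thinning $\Thinned{\ASet,\chi}{},\Thinned{\ASet,\chi'}{}$ of $\Poisson{\ASet,\lambda\mu}$ with the correct marginals and satisfying the percolation constraint~\eqref{eq_dac_disagreement}: every particle in the symmetric difference $\Thinned{\ASet,\chi}{}\SymDiff\Thinned{\ASet,\chi'}{}$ is connected, within that symmetric difference, to $\chi\SymDiff\chi'$, which lives in $\BSet^c$. Since $E$ is increasing, on the event that the two thinnings agree, $\Thinned{\ASet,\chi}{}\in E \iff \Thinned{\ASet,\chi'}{}\in E$; hence the difference of the two probabilities is bounded by $\Proba(\Thinned{\ASet,\chi}{}\neq\Thinned{\ASet,\chi'}{})$, i.e.\ the probability that the symmetric difference is non-empty. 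By~\eqref{eq_dac_disagreement} this forces some particle of $\Poisson{\ASet,\lambda\mu}$ — in particular some particle with center in one of the $\ASet_i$ — to be connected through $\Poisson{\ASet,\lambda\mu}$ to $\BSet^c$, because $\chi\SymDiff\chi'$ is supported outside $\BSet$ and $\ASet\subseteq\BSet$. Thus $\Proba(\Thinned{\ASet,\chi}{}\neq\Thinned{\ASet,\chi'}{})\le \Proba(\ASet\conn{\Poisson{\ASet,\lambda\mu}}\BSet^c)$, and the right-hand side is dominated by $\Proba(\ASet\conn{\Poisson{\lambda\mu}}\BSet^c)\le\sum_{i=1}^p \Proba(\ASet_i\conn{\Poisson{\lambda\mu}}\BSet^c)\le \sum_{i=1}^p C_1(\diam(\ASet_i))\exp(-C_2 d(\ASet_i,\BSet^c))$ via Lemma~\ref{lem_exp_decay_con_single} applied to each $\ASet_i\subseteq\BSet$, after noting $d(\ASet_i,\BSet^c)\ge d(\ASet,\BSet^c)$.

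To get the factor $\Proba(\Poisson{\ASet,\lambda\mu}\in E)$ in front rather than $1$, I would be more careful: on the disagreement event one should additionally use that if $\Thinned{\ASet,\chi}{}\in E$ but $\Thinned{\ASet,\chi'}{}\notin E$ then the \emph{dominating} Poisson process $\Poisson{\ASet,\lambda\mu}\ge\Thinned{\ASet,\chi}{}$ lies in $E$ as well (again because $E$ is increasing), so the relevant bound is $\Proba(\Poisson{\ASet,\lambda\mu}\in E,\ \Thinned{\ASet,\chi}{}\neq\Thinned{\ASet,\chi'}{})$, and the disagreement event is, conditionally on the construction, essentially determined by the connection event which is measurable with respect to the driving Poisson process; a suitable monotone-coupling/FKG-type argument — or simply intersecting the two events and using that the connection event is increasing — yields the product form. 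Then integrating over $\chi'$ against $\Proba(\Gibbs{\BSet^c}\in\dd\chi')$ and using $\Proba(\Gibbs{\ASet}\in E)=\int\Proba(\Gibbs{\ASet}\in E\mid \Gibbs{\BSet^c}=\chi')\,\Proba(\Gibbs{\BSet^c}\in\dd\chi')$ gives~\eqref{eq_increasing_spec_state}.

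The main obstacle I anticipate is precisely extracting the prefactor $\Proba(\Poisson{\ASet,\lambda\mu}\in E)$: the naive coupling argument only gives the bound with prefactor $1$, and one needs to argue that the event $E$ (an increasing event for the thinning $\Thinned{\ASet,\chi}{}$) and the disagreement/connection event can be decoupled up to the stated factor. This should follow because the connection percolation event for $\Poisson{\lambda\mu}$ localized near $\BSet^c$ is, in the disagreement-percolation construction, largely independent of whether the thinned configuration lies in the increasing set $E$ — or, more robustly, because both $\{\Poisson{\ASet,\lambda\mu}\in E\}$ and the connection event are increasing events of the same Poisson process, so that a Harris/FKG inequality gives $\Proba(\Poisson{\ASet,\lambda\mu}\in E,\ \text{disagreement})\le\Proba(\Poisson{\ASet,\lambda\mu}\in E)\,\Proba(\text{disagreement})$ is the \emph{wrong} direction; instead one bounds the disagreement probability pointwise on the trajectory and uses that $\Thinned{\ASet,\chi}{}\in E$ implies $\Poisson{\ASet,\lambda\mu}\in E$. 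Getting this last step clean, together with carefully tracking that $d(\ASet_i,\BSet^c)$ can be replaced by $d(\ASet,\BSet^c)$ in the exponent (using monotonicity of $C_1$), is where the real work lies; everything else is a routine combination of Theorem~\ref{thm_dat}, the union bound over the pieces $\ASet_i$, and Lemma~\ref{lem_exp_decay_con_single}.
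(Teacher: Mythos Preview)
Your overall strategy (reduce to comparing two boundary conditions via the disagreement coupling, then use the connection bound and average over $\chi'$) is exactly the paper's, but there are two concrete gaps.

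\medskip
\textbf{Wrong domain for the coupling.} You apply Theorem~\ref{thm_dat} on $\ASet$ with boundary data $\chi,\chi'\in\Configs{\BSet^c}$ and claim the resulting thinnings have law $\Proba(\Gibbs{\ASet}\in\cdot\mid\Gibbs{\BSet^c}=\chi)$. They do not: Theorem~\ref{thm_dat} applied on $\ASet$ yields the law $\Proba(\Gibbs{\ASet}\in\cdot\mid\Gibbs{\ASet^c}=\chi)$, which forces $\Gibbs{\BSet\setminus\ASet}=0$ and is a different distribution. The paper instead applies Theorem~\ref{thm_dat} on $\BSet$, obtaining thinnings $\Thinned{\BSet,\chi_i}{}$ of $\Poisson{\BSet,\lambda\mu}$ with law $\Proba(\Gibbs{\BSet}\in\cdot\mid\Gibbs{\BSet^c}=\chi_i)$, and only then restricts to $\ASet$. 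This is essential: it is what makes the disagreement cluster live in all of $\BSet$ and reach $\BSet^c$, so that the exponent carries $d(\ASet,\BSet^c)$ rather than $d(\ASet,\ASet^c)=0$.

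\medskip
\textbf{The factor $\Proba(\Poisson{\ASet,\lambda\mu}\in E)$.} You are right that FKG points the wrong way, but no positive-association argument is needed. Once the coupling is done on $\BSet$, the event $\{\Thinned{\BSet,\chi_1}{\ASet}\in E,\ \Thinned{\BSet,\chi_2}{\ASet}\notin E\}$ is contained in $\{\Poisson{\ASet,\lambda\mu}\in E\}\cap\{\ASet\conn{\Poisson{\lambda\mu}}\BSet^c\}$, and these two events are \emph{independent} by the complete independence of the Poisson process. The reason the second event does not depend on $\Poisson{\ASet,\lambda\mu}$ is built into the definition of $\ASet\conn{\xi}\BSet^c$: the starting particle $K$ ranges over all of $\ASet$, so on any connecting path one may replace $K$ by the last path-vertex lying in $\ASet$ and obtain a path whose intermediate vertices are all in $\supp\xi_{\ASet^c}$. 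Hence $\{\ASet\conn{\Poisson{\lambda\mu}}\BSet^c\}=\{\ASet\conn{\Poisson{\ASet^c,\lambda\mu}}\BSet^c\}$, which is $\sigma(\Poisson{\ASet^c,\lambda\mu})$-measurable. This gives the exact product $\Proba(\Poisson{\ASet,\lambda\mu}\in E)\,\Proba(\ASet\conn{\Poisson{\lambda\mu}}\BSet^c)$, after which your union bound over the $\ASet_i$ and Lemma~\ref{lem_exp_decay_con_single} (together with $d(\ASet_i,\BSet^c)\ge d(\ASet,\BSet^c)$) finish the proof exactly as you described.
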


\begin{proof}
First, we show that, for $\chi_1,\chi_2\in\Configs{\BSet^c}$,
\begin{multline}
\label{eq_increasing_spec_spec}
 \Bigl|
  \Proba(\Gibbs{\ASet}\in{}E\mid{}
         \Gibbs{\BSet^c}=\chi_1)
  -
  \Proba(\Gibbs{\ASet}\in{}E\mid{}
         \Gibbs{\BSet^c}=\chi_2)
 \Bigr|
 \\
 \le
   \Proba(\Poisson{\ASet,\lambda\mu}\in{}E)
   \sum_{i=1}^p C_1(\diam(\ASet_i))\exp(-C_2 d(\ASet,\BSet^c))
 .
\end{multline}
By Theorem~\ref{thm_dat},
\begin{align*}
\Bigl|\Proba(\Gibbs{\ASet}\in{}E\mid{}\Gibbs{\BSet^c}=\chi_1)
&-\Proba(\Gibbs{\ASet}\in{}E\mid{}\Gibbs{\BSet^c}=\chi_2) \Bigr|\\
&= \Bigl|\Proba(\Thinned{\BSet,\chi_1}{\ASet}\in{}E)
-\Proba(\Thinned{\BSet,\chi_2}{\ASet}\in{}E)\Bigr|\\
&=  \Bigl|
   \Proba(\Thinned{\BSet,\chi_1}{\ASet}\in{}E
         ,\Thinned{\BSet,\chi_2}{\ASet}\not\in{}E)
- \Proba(\Thinned{\BSet,\chi_1}{\ASet}\not\in{}E
         ,\Thinned{\BSet,\chi_2}{\ASet}\in{}E)\Bigr| \\
&\le  \max\Set{
   \Proba(\Thinned{\BSet,\chi_1}{\ASet}\in{}E
         ,\Thinned{\BSet,\chi_2}{\ASet}\not\in{}E)
  ,\Proba(\Thinned{\BSet,\chi_1}{\ASet}\not\in{}E
         ,\Thinned{\BSet,\chi_2}{\ASet}\in{}E)
  }.
\end{align*}
By symmetry we only need to bound the first term in the above maximum.
It follows from~\eqref{eq_dac_disagreement} that
\begin{align*}
 \Proba(\Thinned{\BSet,\chi_1}{\ASet}\in{}E &
       ,\Thinned{\BSet,\chi_2}{\ASet}\not\in{}E)\\
&= \Proba(\Thinned{\BSet,\chi_1}{\ASet}\in{}E
  ,\Thinned{\BSet,\chi_2}{\ASet}\not\in{}E
  ,\emptyset\not=
   \Thinned{\BSet,\chi_1}{\ASet}\SymDiff\Thinned{\BSet,\chi_2}{\ASet} ) \\
&= \Proba(
   \Thinned{\BSet,\chi_1}{\ASet}\in{}E
  ,\Thinned{\BSet,\chi_2}{\ASet}\not\in{}E
  ,\emptyset\not=
   \Thinned{\BSet,\chi_1}{\ASet}\SymDiff\Thinned{\BSet,\chi_2}{\ASet}
   \conn{
    \Thinned{\BSet,\chi_1}{}
    \SymDiff
    \Thinned{\BSet,\chi_2}{}
   }
   \chi_1\SymDiff\chi_2 ).
\end{align*}
Since
\begin{align*}
\Big\{\Thinned{\BSet,\chi_1}{\ASet}\SymDiff\Thinned{\BSet,\chi_2}{\ASet}
\conn{\Thinned{\BSet,\chi_1}{}
\SymDiff
    \Thinned{\BSet,\chi_2}{}} \chi_1\SymDiff\chi_2\Big\}
\subseteq \Big\{\ASet\conn{\Thinned{\BSet,\chi_1}{}
    \SymDiff
    \Thinned{\BSet,\chi_2}{}} \chi_1\SymDiff\chi_2\Big\}
\subseteq
\Big\{\ASet\conn{\Poisson{\lambda\mu}}\BSet^c\Big\},
\end{align*}
we obtain that
\begin{align*}
 \Proba(\Thinned{\BSet,\chi_1}{\ASet}\in{}E,\Thinned{\BSet,\chi_2}{\ASet}\not\in{}E)
&\le  \Proba(\Poisson{\ASet,\lambda\mu}\in{}E
       ,\ASet\conn{\Poisson{\lambda\mu}}\BSet^c)
\\
&= \Proba(\Poisson{\ASet,\lambda\mu}\in{}E)
   \Proba(\ASet\conn{\Poisson{\lambda\mu}}\BSet^c)
\\
&\le \Proba(\Poisson{\ASet,\lambda\mu}\in{}E)
   \sum_{i=1}^p \Proba(\ASet_i\conn{\Poisson{\lambda\mu}}\BSet^c)
\\
&\le \Proba(\Poisson{\ASet,\lambda\mu}\in{}E)
 \sum_{i=1}^p C_1(\diam(\ASet_i))\exp(-C_2 d(\ASet,\BSet^c))
 ,
\end{align*}
where the equality results from the complete independence of a Poisson
process, the second inequality is a Boolean bound and the final equality uses~\eqref{eq_exp_decay_con_single} and the fact that $d(\ASet,\BSet^c)\le d(\ASet_i,\BSet^c)$.
This proves~\eqref{eq_increasing_spec_spec}.

To prove~\eqref{eq_increasing_spec_state}, we use the DLR-equation~\eqref{eq_dlr} to obtain that
\begin{align*}
\Bigl|  \Proba(\Gibbs{\ASet}\in{}E\mid{}\Gibbs{\BSet^c}=\chi)
&-\Proba(\Gibbs{\ASet}\in{}E)\Bigr|\\
&\le\int\Bigl|\Proba(\Gibbs{\ASet}\in{}E\mid{}\Gibbs{\BSet^c}=\chi)
-\Proba(\Gibbs{\ASet}\in{}E\mid{}\Gibbs{\BSet^c}=\chi')\Bigr|
\,\Proba(\Gibbs{\BSet^c}\in\dd{}\chi').
\end{align*}
An application of~\eqref{eq_increasing_spec_spec} to the integrand shows~\eqref{eq_increasing_spec_state}.
\end{proof}

For $\ASet\in\BoundedBorelOf{\NonEmptyParticles}$ and $n\in\NN$, let
$\AtLeastSoManyPointsOn{\ASet,n}:=\Set{\xi\in\bN\mid{}\xi(\ASet)\ge{}n}$.
The event $\AtLeastSoManyPointsOn{\ASet,n}$ is increasing.
Fix $p\in\NN$ and disjoint $\ASet_1,\dotsc,\ASet_p\in\BoundedBorelOf{\NonEmptyParticles}$.
Let $\GenPP{}$ be a particle process. Then,
\begin{equation}
\label{eq_moment_sum_probas}
 \Expect\Bigl(\prod_{i=1}^{p} \GenPP{}(\ASet_i)\Bigr)
 =\sum_{\vec{n}\in\NN^d}
  \Proba\Bigl(
   \forall{}1\le{}i\le{}p\colon{}
   \GenPP{\ASet_i}\in\AtLeastSoManyPointsOn{\ASet_i,n_i}
  \Bigr),
\end{equation}
where we use the notation $\vec{n}=:(n_1,\dotsc,n_d)$.

\begin{lemma}
\label{lem_gstate_moment_bound}
For $p\in\NN$ and disjoint $\ASet_1,\dotsc,\ASet_p\in\BoundedBorelOf{\NonEmptyParticles}$,
\begin{equation}
\label{eq_gstate_moment_bound}
 \Expect\Bigl(\prod_{i=1}^p \Xi(\ASet_i)\Bigr)
 \le \lambda^p \prod_{i=1}^p \mu(\ASet_i).
\end{equation}
\end{lemma}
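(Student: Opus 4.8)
The plan is to identify the left-hand side with a value of the $p$\textsuperscript{th} factorial moment measure and then invoke the uniform bound $\CorrFun{p}\le\lambda^p$ already recorded for admissible Gibbs processes. Since $\ASet_1,\dotsc,\ASet_p$ are pairwise disjoint, every tuple $(K_1,\dotsc,K_p)\in\ASet_1\times\dotsb\times\ASet_p$ automatically has pairwise distinct entries, so the indicator $\I\{K_i\ne K_j\text{ for }i\ne j\}$ is identically $1$ on this product set. Hence the restriction of the factorial power $\Xi^{(p)}$ to $\ASet_1\times\dotsb\times\ASet_p$ agrees with the ordinary product measure $\Xi^{\otimes p}$ there, and Fubini gives $\Xi^{(p)}(\ASet_1\times\dotsb\times\ASet_p)=\prod_{i=1}^p\Xi(\ASet_i)$. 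Taking expectations and recalling $\alpha^{(p)}=\Expect[\Xi^{(p)}]$ yields
\[
 \Expect\Bigl(\prod_{i=1}^p\Xi(\ASet_i)\Bigr)=\alpha^{(p)}(\ASet_1\times\dotsb\times\ASet_p).
\]

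Next I would apply~\eqref{eq_correlation_GNZ} to rewrite the right-hand side as $\int_{\ASet_1\times\dotsb\times\ASet_p}\CorrFun{p}(K_1,\dotsc,K_p)\,\mu^p(\dd(K_1,\dotsc,K_p))$ and bound the integrand pointwise by $\lambda^p$ using the inequality $\CorrFun{p}(K_1,\dotsc,K_p)\le\lambda^p$ noted just after Definition~\ref{def_admissible_gibbs} (itself an immediate consequence of $\kappa\le1$ via~\eqref{eq_papangelou_intensity},~\eqref{eq_papan},~\eqref{eq_GNZ_papan}). Since $\mu^p(\ASet_1\times\dotsb\times\ASet_p)=\prod_{i=1}^p\mu(\ASet_i)$ with each $\mu(\ASet_i)<\infty$ because $\ASet_i\in\BoundedBorelOf{\NonEmptyParticles}$, this produces $\Expect(\prod_i\Xi(\ASet_i))\le\lambda^p\prod_i\mu(\ASet_i)$, which is the assertion.

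An equivalent, purely probabilistic route worth mentioning: because the $\ASet_i$ are disjoint and the factors nonnegative, $\xi\mapsto\prod_{i=1}^p\xi(\ASet_i)$ is an increasing functional on $\bN$, so the stochastic domination $\Proba(\Xi\in\cdot)\overset{d}{\le}\BP(\Poisson{\lambda\mu}\in\cdot)$ from Lemma~\ref{lem_stoch_dom_palm} bounds the expectation by $\Expect(\prod_i\Poisson{\lambda\mu}(\ASet_i))$, which factorizes into $\prod_i\lambda\mu(\ASet_i)$ by complete independence of the Poisson process over disjoint sets. There is no genuine obstacle in either argument; the only step needing a moment's attention is the observation that disjointness collapses the product of counting variables into a single factorial-moment evaluation (equivalently, makes that product an increasing functional), after which the already-established correlation bound does all the work.
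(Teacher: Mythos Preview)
Your proposal is correct. Your primary argument---collapsing $\prod_i\Xi(\ASet_i)$ to $\alpha^{(p)}(\ASet_1\times\dotsb\times\ASet_p)$ via disjointness and then bounding the density $\CorrFun{p}\le\lambda^p$ under $\mu^p$---is in fact more direct than the paper's own proof. The paper instead uses the representation~\eqref{eq_moment_sum_probas} to write the product moment as a sum of probabilities of the increasing events $\bigcap_i\AtLeastSoManyPointsOn{\ASet_i,n_i}$, applies the stochastic domination of Lemma~\ref{lem_stoch_dom_palm} term by term, and then re-sums via~\eqref{eq_moment_sum_probas} for the Poisson process to obtain $\prod_i\lambda\mu(\ASet_i)$. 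This is essentially your alternative route, except that you observe directly that $\xi\mapsto\prod_i\xi(\ASet_i)$ is increasing rather than decomposing it into a sum of indicators of increasing events. Your factorial-moment approach has the advantage of avoiding stochastic domination altogether and relying only on the already-recorded bound $\CorrFun{p}\le\lambda^p$; the paper's approach has the (minor) advantage that it sets up the machinery~\eqref{eq_moment_sum_probas} which is reused in the proof of Lemma~\ref{lem_spec_state_moment_diff}.
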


\begin{proof}
Applying~\eqref{eq_moment_sum_probas}, then stochastic domination from Lemma~\ref{lem_stoch_dom_palm}, applying~\eqref{eq_moment_sum_probas} again and writing out the moment of the Poisson process yields the bound.
\end{proof}

\begin{lemma}
\label{lem_spec_state_moment_diff}
Let $p\in\NN$ and let $\ASet_1,\dotsc,\ASet_p\in\BoundedBorelOf{\NonEmptyParticles}$
be disjoint.
Suppose that
$\BSet\in\BoundedBorelOf{\NonEmptyParticles}$ satisfies
$\BSet\supseteq{}\ASet:=\bigcup_{i=1}^p \ASet_i$ and
let $\chi\in\Configs{\BSet^c}$. Then
\begin{equation}
\label{eq_spec_state_moment_diff}
 \Bigl| \Expect\Bigl(\prod_{i=1}^p \Gibbs{}(\ASet_i)\Bigr)
-\Expect\Bigl(\prod_{i=1}^p \Gibbs{}(\ASet_i)
   \mathrel{\Big|} 
   \Gibbs{\BSet^c} = \chi\Bigr) \Bigr|
 \le\lambda^p\Bigl(\prod_{i=1}^p \mu(\ASet_i)\Bigr)
C_\Psi\exp(-C_2d(\Psi,\Gamma^c)),
\end{equation}
where $C_\Psi:=\sum_{i=1}^{q}C_1(\diam(\Psi_i))$.
\end{lemma}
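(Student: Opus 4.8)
The plan is to reduce the estimate to Lemma~\ref{lem_increasing} by expanding each of the two moments into a sum of probabilities of increasing events via~\eqref{eq_moment_sum_probas}. Throughout write $C_\ASet:=\sum_{i=1}^p C_1(\diam(\ASet_i))$; this is the quantity denoted $C_\Psi$ in the statement, so the target bound reads $\lambda^p\bigl(\prod_{i=1}^p\mu(\ASet_i)\bigr)C_\ASet\exp(-C_2 d(\ASet,\BSet^c))$. All the moments appearing below are finite: $\mu(\ASet_i)<\infty$ since $\ASet_i\in\BoundedBorelOf{\NonEmptyParticles}$, Lemma~\ref{lem_gstate_moment_bound} controls $\Expect\prod_{i=1}^p\Gibbs{}(\ASet_i)$, and the same argument applied under the boundary condition $\chi$ (whose Papangelou intensity is again bounded by $1$, by Lemma~\ref{lem_papangelou_palm}) controls the conditional moment. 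Hence the rearrangements below are legitimate.

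First I would fix $\vec{n}=(n_1,\dotsc,n_p)\in\N^p$ and put $E_{\vec n}:=\bigcap_{i=1}^p\AtLeastSoManyPointsOn{\ASet_i,n_i}$. Being an intersection of increasing events, $E_{\vec n}\in\cN$ is increasing, and it is $\sigma(\Gibbs{\ASet})$-measurable; moreover $\Gibbs{\ASet}\in E_{\vec n}$ holds if and only if $\Gibbs{}(\ASet_i)\ge n_i$ for every $i$, because $\ASet_i\subseteq\ASet\subseteq\BSet$. Applying Lemma~\ref{lem_increasing} with $E=E_{\vec n}$ therefore yields
\[
 \bigl|\Proba(\Gibbs{\ASet}\in E_{\vec n}\mid\Gibbs{\BSet^c}=\chi)-\Proba(\Gibbs{\ASet}\in E_{\vec n})\bigr|
 \le\Proba(\Poisson{\ASet,\lambda\mu}\in E_{\vec n})\,C_\ASet\exp(-C_2 d(\ASet,\BSet^c)),
\]
with the same prefactor $C_\ASet\exp(-C_2 d(\ASet,\BSet^c))$ for every $\vec n$.

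Finally I would sum over $\vec n\in\N^p$. Applying~\eqref{eq_moment_sum_probas} both to $\Gibbs{}$ and to the conditional law of $\Gibbs{\ASet}$ given $\Gibbs{\BSet^c}=\chi$, and then the triangle inequality, gives
\[
 \Bigl|\Expect\prod_{i=1}^p\Gibbs{}(\ASet_i)-\Expect\Bigl(\prod_{i=1}^p\Gibbs{}(\ASet_i)\mathrel{\Big|}\Gibbs{\BSet^c}=\chi\Bigr)\Bigr|
 \le\sum_{\vec n\in\N^p}\bigl|\Proba(\Gibbs{\ASet}\in E_{\vec n}\mid\Gibbs{\BSet^c}=\chi)-\Proba(\Gibbs{\ASet}\in E_{\vec n})\bigr|.
\]
Inserting the per-term bound from the previous step and factoring out $C_\ASet\exp(-C_2 d(\ASet,\BSet^c))$, the remaining sum is $\sum_{\vec n\in\N^p}\Proba(\Poisson{\ASet,\lambda\mu}\in E_{\vec n})=\Expect\prod_{i=1}^p\Poisson{\lambda\mu}(\ASet_i)$ by~\eqref{eq_moment_sum_probas} once more; since the $\ASet_i$ are pairwise disjoint, complete independence of the Poisson process makes this equal to $\prod_{i=1}^p\lambda\mu(\ASet_i)=\lambda^p\prod_{i=1}^p\mu(\ASet_i)$. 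Combining these identities gives exactly~\eqref{eq_spec_state_moment_diff}.

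I do not anticipate a genuine obstacle: Lemma~\ref{lem_increasing} carries the entire analytic content, and what remains is bookkeeping of the layer-cake expansion~\eqref{eq_moment_sum_probas}. The only points requiring a little care are the finiteness of the conditional moment (so that the interchange of summation with the conditional expectation, and the termwise triangle inequality, are valid) and the observation that each $E_{\vec n}$ is simultaneously increasing and $\sigma(\Gibbs{\ASet})$-measurable, so that Lemma~\ref{lem_increasing} applies to precisely the probabilities occurring in~\eqref{eq_moment_sum_probas}.
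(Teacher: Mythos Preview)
Your proposal is correct and follows essentially the same route as the paper: expand both moments via~\eqref{eq_moment_sum_probas}, apply Lemma~\ref{lem_increasing} termwise to the increasing events $E_{\vec n}$, and then re-sum using~\eqref{eq_moment_sum_probas} for the Poisson process together with complete independence. The paper's proof is terser, omitting the finiteness and measurability checks you supply, but the argument is the same.
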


\begin{proof}
Applying~\eqref{eq_moment_sum_probas} and then~\eqref{eq_increasing_spec_state} gives
\begin{align*}
 &\Bigl| \Expect\Bigl(\prod_{i=1}^p \Gibbs{}(\ASet_i)\Bigr)
-\Expect\Bigl(\prod_{i=1}^p \Gibbs{}(\ASet_i)
   \mathrel{\Big|} 
   \Gibbs{\BSet^c} = \chi\Bigr) \Bigr|
\\
&=  \sum_{\vec{n}\in\NN^d}
  \Bigl|
   \Proba\Bigl(
    \forall{}1\le{}i\le{}p\colon{}
    \Gibbs{\ASet_i}\in\AtLeastSoManyPointsOn{\ASet_i,n_i}\Bigr)
  -\Proba\Bigl(\forall{}1\le{}i\le{}p\colon{}
    \Gibbs{\ASet_i}\in\AtLeastSoManyPointsOn{\ASet_i,n_i}
    \mid{}\Gibbs{\BSet^c}=\chi\Bigr)
   \Bigr|
\\
&\le \sum_{\vec{n}\in\NN^d}
   \Proba\Bigl(
    \forall{}1\le{}i\le{}p\colon{}
    \Poisson{\lambda\mu}\in\AtLeastSoManyPointsOn{\ASet_i,n_i}
   \Bigr)
   \sum_{i=1}^{p}C_1(\diam(\ASet_i))\exp(-C_2 d(\ASet,\BSet^c)).
\end{align*}
Conclude by another application of~\eqref{eq_moment_sum_probas} and writing out the Poisson moment.
\end{proof}

\begin{lemma}
\label{lem_gstate_moment_diff_bound}
For $p,q\in\NN$ and disjoint $\ASet_1,\dotsc,\ASet_p,\CSet_1,\dotsc,\CSet_q\in\cB_b$,
\begin{multline}
\label{eq_gstate_moment_diff_bound}
 \Biggl| \Expect
   \Bigl(\prod_{i=1}^p \Gibbs{}(\ASet_i)\Bigr)
   \Bigl(\prod_{j=1}^q \Gibbs{}(\CSet_j)\Bigr)
  -\Expect\Bigl(\prod_{i=1}^p \Gibbs{}(\ASet_i)\Bigr)
  \Expect\Bigl(\prod_{i=1}^q \Gibbs{}(\CSet_i)\Bigr)
 \Biggr|
\\
 \le
 \lambda^{p+q}
 \Bigl(\prod_{i=1}^p \mu(\ASet_i)\Bigr)
 \Bigl(\prod_{i=1}^q \mu(\CSet_i)\Bigr)
 \min(C_{\ASet},C_{\CSet})
 \exp(-C_2 d(\ASet,\CSet))
 ,
\end{multline}
where $\ASet:=\bigcup_{i=1}^p \ASet_i$, $C_{\ASet}:=\sum_{i=1}^{p}C_1(\diam(\ASet_i))$, $\CSet:=\bigcup_{i=1}^q \CSet_i$ and\\
$C_{\CSet}:=\sum_{i=1}^{q}C_1(\diam(\CSet_i))$.
\end{lemma}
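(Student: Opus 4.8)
The plan is to bound the covariance of products of Gibbs particle counts by conditioning on the configuration outside a suitable bounded observation window and then invoking Lemma~\ref{lem_spec_state_moment_diff}. First I would introduce $\BSet:=\{L\in\NonEmptyParticles\mid{}d(\{L\},\ASet)<\tfrac12 d(\ASet,\CSet)\}$, a bounded Borel set (bounded because the $\ASet_i$ are bounded and the enlargement is finite) that contains $\ASet$ and is disjoint from $\CSet$, with $d(\ASet,\BSet^c)\ge\tfrac12 d(\ASet,\CSet)$. Conditioning on $\Gibbs{\BSet^c}$ and using the tower property, the left-hand side of~\eqref{eq_gstate_moment_diff_bound} becomes
\begin{align*}
 \Biggl|\Expect\biggl[\Bigl(\prod_{j=1}^q\Gibbs{}(\CSet_j)\Bigr)
  \Bigl(\Expect\Bigl[\textstyle\prod_{i=1}^p\Gibbs{}(\ASet_i)\,\Big|\,\Gibbs{\BSet^c}\Bigr]
   -\Expect\Bigl[\textstyle\prod_{i=1}^p\Gibbs{}(\ASet_i)\Bigr]\Bigr)\biggr]\Biggr|,
\end{align*}
where I used that each $\Gibbs{}(\CSet_j)$ is $\Gibbs{\BSet^c}$-measurable (since $\CSet_j\subseteq\BSet^c$) together with the fact that $\Expect[\prod_i\Gibbs{}(\ASet_i)]$ is a constant. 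The inner difference is controlled $\BP$--a.s.\ by Lemma~\ref{lem_spec_state_moment_diff} applied with boundary condition $\chi=\Gibbs{\BSet^c}$, giving the pointwise bound $\lambda^p\bigl(\prod_i\mu(\ASet_i)\bigr)C_\ASet\exp(-C_2 d(\ASet,\BSet^c))$.

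Next I would pull this deterministic bound out of the expectation and estimate the remaining factor $\Expect\bigl(\prod_{j=1}^q\Gibbs{}(\CSet_j)\bigr)\le\lambda^q\prod_{j=1}^q\mu(\CSet_j)$ by Lemma~\ref{lem_gstate_moment_bound}. Combining, and replacing $d(\ASet,\BSet^c)$ by $\tfrac12 d(\ASet,\CSet)$ (absorbing the factor $\tfrac12$ into $C_2$, or simply keeping a possibly smaller exponential rate), yields~\eqref{eq_gstate_moment_diff_bound} with $C_\ASet$ in place of $\min(C_\ASet,C_\CSet)$. To obtain the minimum, I would run the same argument with the roles of the $\ASet$-family and the $\CSet$-family interchanged — conditioning instead on a bounded neighbourhood of $\CSet$ disjoint from $\ASet$ — which produces the bound with $C_\CSet$; taking the smaller of the two estimates gives the stated $\min(C_\ASet,C_\CSet)$.

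The main obstacle is a bookkeeping one rather than a conceptual one: ensuring that the auxiliary set $\BSet$ is genuinely in $\BoundedBorelOf{\NonEmptyParticles}=\{z^{-1}(B)\mid B\in\cB^d_b\}$, i.e.\ that it is the preimage under $z$ of a bounded Borel subset of $\R^d$, and that the enlargement respects the deterministic particle bound $R$ so that the distance estimate $d(\ASet,\BSet^c)\ge\tfrac12 d(\ASet,\CSet)$ holds with the pseudo-distance~\eqref{eq_pseudodistance}. One must also be slightly careful that the constant emerging from the exponent rescaling is uniform in $p,q$ and the particular sets, but since $C_2$ from Lemma~\ref{lem_exp_decay_con_single} is a fixed constant and $C_1$ is monotone decreasing, all constants can be chosen once and for all. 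Everything else — the tower property, measurability of $\Gibbs{}(\CSet_j)$ with respect to $\Gibbs{\BSet^c}$, and the two moment bounds — is a direct appeal to results already established.
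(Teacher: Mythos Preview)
Your approach is correct and follows the same overall strategy as the paper: condition on the Gibbs configuration outside a suitable bounded set so that the $\CSet$-counts become measurable, apply Lemma~\ref{lem_spec_state_moment_diff} to the conditional inner difference, then bound the remaining $\CSet$-moment via Lemma~\ref{lem_gstate_moment_bound}, and finally symmetrise. The one substantive difference is your choice of the auxiliary set. You take a half-distance neighbourhood of $\ASet$, which forces $d(\ASet,\BSet^c)\ge\tfrac12 d(\ASet,\CSet)$ and hence costs you a factor $\tfrac12$ in the exponent; it also creates the bookkeeping worry you flag, since a $d_H$-neighbourhood is not obviously of the form $z^{-1}(B)$. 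The paper avoids both issues by a slicker choice: it fixes a large ``sphere'' $\BSet$ (i.e.\ $z^{-1}$ of a large Euclidean ball) containing both $\ASet$ and $\CSet$, and then conditions on $\Gibbs{\CSet\cup\BSet^c}$. One applies Lemma~\ref{lem_spec_state_moment_diff} with the bounded set $\BSet\setminus\CSet\supseteq\ASet$; since $(\BSet\setminus\CSet)^c=\CSet\cup\BSet^c$ and $\BSet$ is large, $d(\ASet,\CSet\cup\BSet^c)=d(\ASet,\CSet)$ exactly, yielding the stated constant $C_2$ with no loss. Moreover $\BSet\setminus\CSet$ is trivially in $\BoundedBorelOf{\NonEmptyParticles}$, since both $\BSet$ and $\CSet$ are. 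So your argument is fine for the application (where only some exponential rate matters), but the paper's choice of $\BSet$ both proves the lemma exactly as stated and dissolves the measurability concern you identified.
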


\begin{proof}
Let $\BSet$ be a large sphere such that
$d(\ASet\cup{}\CSet,\BSet^c) > d(\ASet,\BSet^c)$.
Hence,
\begin{equation}
\label{eq_gstate_moment_bound_enclosing}
 d(\ASet,\CSet\cup\BSet^c) = d(\ASet,\CSet).
\end{equation}

By the DLR-equation~\eqref{eq_dlr},
\begin{align*}
 \Biggl|&\Expect
   \Bigl(\prod_{i=1}^p \Gibbs{}(\ASet_i)\Bigr)
   \Bigl(\prod_{j=1}^q \Gibbs{}(\CSet_j)\Bigr)
 -\Expect\Bigl(\prod_{i=1}^p \Gibbs{}(\ASet_i)\Bigr)
  \Expect\Bigl(\prod_{i=1}^q \Gibbs{}(\CSet_i)\Bigr)
 \Biggr| \\
 &\le  \int\Biggl|\Expect\Bigl(\prod_{i=1}^p \Gibbs{}(\ASet_i)\Bigl|\Bigr. 
      \Gibbs{\CSet\cup\BSet^c}=\chi\Bigr)
    -\Expect\Bigl(\prod_{i=1}^p \Gibbs{}(\ASet_i)\Bigr)
   \Biggr|\Bigl(\prod_{j=1}^q \chi(\CSet_j)\Bigr)  \,\Proba(\Gibbs{\CSet\cup\BSet^c}\in\dd\chi).
\intertext{
Applying~\eqref{eq_spec_state_moment_diff} with $\BSet$ replaced by
$\BSet\setminus\CSet$ and noting that $(\BSet\setminus\CSet)^c=\CSet\cup\BSet^c$,
we bound
}
 &\le{}
 \lambda^p
 \Bigl(\prod_{i=1}^p \mu(\ASet_i)\Bigr)
  C_{\ASet}\exp(-C_2 d(\ASet,\CSet\cup\BSet^c))
 \int\Bigl(\prod_{j=1}^q \chi(\CSet_j)\Bigr)
 \,\Proba(\Gibbs{\CSet\cup\BSet^c}\in\dd\chi)
 \\
 &=
 \lambda^p
 \Bigl(\prod_{i=1}^p \mu(\ASet_i)\Bigr)
 C_{\ASet}\exp(-C_2 d(\ASet,\CSet\cup\BSet^c))
 \Expect\Bigl(\prod_{j=1}^q \Gibbs{}(\CSet_j)\Bigr)
 \\
 &\le
 \lambda^{p+q}
 \Bigl(\prod_{i=1}^p \mu(\ASet_i)\Bigr)
 \Bigl(\prod_{j=1}^q \mu(\CSet_j)\Bigr)
 C_{\ASet}\exp(-C_2 d(\ASet,\CSet)),
\end{align*}
where we use~\eqref{eq_gstate_moment_bound_enclosing} and~\eqref{eq_gstate_moment_bound} to obtain the final inequality.
The improved bound $\min(C_{\ASet},C_{\CSet})$ follows from the symmetry in $\ASet$ and $\CSet$.
\end{proof}

\begin{proof}[Proof of Theorem~\ref{thm_exp_decorrelation}]
Let $\ASet_{nj}$, $n,j\in\NN$, be a {\em dissection system} in
$\bN$~\cite[p.\ 20]{Kallenberg17}. Let $k\in\NN$ and let
$\alpha_k(\cdot):=\Expect \Xi^k(\cdot)$ denote
the {\em $k$\textsuperscript{th} moment measure} of $\Xi$.
Let $\alpha_k=\alpha'_k+\alpha''_k$ be the Lebesgue decomposition
(\cite[Corollary 1.29]{Kallenberg17}) of $\alpha_k$ with
respect to $\mu^k$, that is, $\alpha'_k$ is absolutely continuous
with respect to $\mu^k$ while $\alpha''_k$ and $\mu^k$ are mutually
singular.
Define
\begin{align*}
g_k(K_1,\dotsc,K_{k}):=\limsup_{n\to\infty}\sum_{j_1,\dotsc,j_{k}\in\NN}
\mu^{k}(\ASet_{n\vec{j}})^{-1}\alpha_{k}(\ASet_{n\vec{j}})
\I\{(K_1,\dotsc,K_{k})\in \ASet_{n\vec{j}}\},
\end{align*}
where we write $\vec{j}:=(j_1,\dotsc,j_{k})$ and
$\ASet_{n,\vec{j}}:=\ASet_{nj_1}\times\cdots\times\ASet_{nj_{k}}$
and where we set $a/0:=0$ for all $a\in\R$.
Outside the {\em generalised diagonal}
\begin{align*}
  D_{k}:=\{(K_1,\dotsc,K_{k}) \in (\NonEmptyParticles)^{k}\mid{}
\text{there exist $\{i,j\}\subseteq\{1,\dotsc,n\}$ with $K_i = K_j$}\}
\end{align*}
the function $g_k$ coincides with
\begin{align*}
g_k^{\ne}(K_1,\dotsc,K_{k}):=\limsup_{n\to\infty}\sideset{}{^{\ne}}\sum_{j_1,\dotsc,j_{k}\in\NN}
\mu^{k}(\ASet_{n\vec{j}})^{-1}\alpha_{k}(\ASet_{n\vec{j}})
\I\{(K_1,\dotsc,K_{k})\in \ASet_{n\vec{j}}\},
\end{align*}
where the superscript $\ne$ indicates summation over $k$-tuples with
distinct entries.
Since $\Xi$ is simple, the measure $\alpha^{(k)}$ is the restriction
of $\alpha_{k}$ to the complement of $D_{k}$, see~\cite[Exercise 6.9]{LastPenrose17} and the proof
of~\cite[Theorem 6.13]{LastPenrose17}. Moreover, Lemma~\ref{lem_gstate_moment_bound}
shows that $\alpha^{(k)}$ is absolutely continuous with respect to $\mu^k$.
Therefore we obtain from the proof of~\cite[Theorem 1.28]{Kallenberg17}
and~\eqref{eq_correlation_GNZ} that the above superior limits
are actually limits for $\mu^k$--a.e.\ $(K_1,\dotsc,K_{k})$
and, moreover, that
\begin{align}
\label{eq_correlation_density}
 \CorrFun{k}(K_1,\dotsc,K_{k}) = g_k(K_1,\dotsc,K_{k})
 ,\quad\alpha^{(k)}
 \text{--a.e.\ $(K_1,\dotsc,K_{k})\in(\NonEmptyParticles)^{k}$}
.
\end{align}

Let $p,q\in\NN$.
Using~\eqref{eq_correlation_density} for $k\in\{p+q,p,q\}$ and combining this
with Lemma~\ref{lem_gstate_moment_diff_bound},
we obtain for
$\alpha^{(p+q)}\text{--a.e.\ $(K_1,\dotsc,K_{p+q})\in(\NonEmptyParticles)^{p+q}$}$
that
\begin{align}\notag
 |&\CorrFun{p+q}(K_1,\dotsc ,K_{p+q})-
  \CorrFun{p}(K_1,\dotsc ,K_p)
  \CorrFun{q}(K_{p+1},\dotsc ,K_{p+q})|
\\ \notag
&\le\lambda^{p+q}
 \limsup_{n\to\infty}\sideset{}{^{\ne}}
  \sum_{j_1,\dotsc,j_{p+q}\in\NN}
  \min(p,q)C_1(1)
  \exp(-C_2 d(\cup^p_{i=1}\ASet_{nj_i},\cup^{p+q}_{i=p+1}\ASet_{nj_i}))
  \I\{(K_i)_{i=1}^{p+q}\in \ASet_{n\vec{j}}\}
\\\label{e2345}
&=
 \lambda^{p+q}\min(p,q)C_1(1)
 \exp(-C_2 d(\{K_1,\dotsc,K_p\},\{K_{p+1},\dotsc,K_{p+q}\}))
 .
\end{align}
Here the inequality can be obtained from
Lemma~\ref{lem_gstate_moment_diff_bound} as follows.
Fix $(K_1,\dotsc,K_{p+q})\in(\NonEmptyParticles)^{p+q}$ such that
$K_i\ne K_j$ for $i\ne j$. Then, for all sufficiently large $n\in\NN$,
there exists a unique $\vec{j}\in \NN^{p+q}$ with distinct entries such
that
$(K_1,\dotsc,K_{p+q})\in
\ASet_{n\vec{j}}=:\ASet_n(K_1,\dotsc,K_{p+q})$.
As $n\to\infty$ we have
$\ASet_n(K_1,\dotsc,K_{p+q})\downarrow\{K_1,\dotsc,K_{p+q}\}$.
Moreover the diameter of $\ASet_n(K_1,\dotsc,K_{p+q})$ (with respect
to the product of the Hausdorff metric) tends to $0$.
These facts do also imply the identity~\eqref{e2345}.
Indeed, we just need combine them with definition~\eqref{eq_pseudodistance} of
$d(\cdot,\cdot)$ and the triangle inequality.
\end{proof}

\section{Asymptotic properties of $U$-statistics}
\label{sec_applications}

In this section we fix an admissible Gibbs process $\Xi$ as in
Definition~\ref{def_admissible_gibbs}.

For $n\in\NN$, let $W_{n} := \left[-\frac{1}{2}n^{1/d},\frac{1}{2}n^{1/d}
\right]^{d}$
be the centred cube of volume $n$, $\cC^d_n := z^{-1}(W_n)$ and
$\xi_n:=\xi_{W_n}$, for $\xi\in\bN$.
Let $\Xi_n := \Xi_{W_n}$ and $\Xi_n^c:= \Xi_{W_n^c}$ be the
restriction of the Gibbs process to $\cC^d_n$ and $(\cC^d_n)^c$ respectively.
For a given mapping $F\colon \bN\to\R$, we are interested in the asymptotic
properties of $F(\Xi_n)$ as $n\to\infty$.
We focus on special mappings $F$ introduced next.

\subsection{Admissible $U$-statistics}

A function $h\colon (\NonEmptyParticles )^{k} \rightarrow \R$ is called {\em symmetric}
if $h(K_{1},\dotsc,K_{k}) = h(K_{\pi(1)},\dotsc,K_{\pi(k)})$,
for all $K_{1},\dotsc,K_{k} \in \NonEmptyParticles$
and every permutation $\pi$ of $k$ elements.
It is translation invariant,
if $h(K_{1},\dotsc,K_{k}) = h(\theta_{x}K_{1},\dotsc,\theta_{x}K_{k})$,
for all $K_1,\dotsc,K_k \in\NonEmptyParticles $ and $x \in \R^{d}$.
Given a measurable symmetric and translation invariant function $h$ we can define
\begin{equation}
\label{Fh_function}
 F_h(\xi):=
 \frac{1}{k!}\int h(K_1,\dotsc, K_k)\,\xi^{(k)}(\dd(K_{1},\dotsc,K_{k})),\quad \xi\in\bN.
\end{equation}
In fact, the functions $F_h(\Xi)$ and $F_h(\Xi_n)$
are {\em $U$-statistics} of {\em order} $k$, cf.~\cite{RS}
and~\cite[Chapter 12]{LastPenrose17}.
Define
\begin{equation}
\label{eq_u_statistic_def}
 T(K,\xi):=
 \frac{1}{k!}\int h(K,K_2,\dotsc, K_k)\,\xi^{(k-1)}(\dd(K_{2},\dotsc,K_{k})),
\quad (K,\xi) \in \cC^{(d)} \times \bN,
\end{equation}
where the case $k=1$ has to be read as $T(K):=h(K)$.
Then
\begin{equation*}
F_h(\xi):=\int T(K,\xi)\,\xi(\dd K),\quad \xi\in\bN.
\end{equation*}
The authors of~\cite{BYY} call $T$ a {\em score function}.

\begin{definition}
\label{def_admissible_ustatistic}
\rm
Let $k\in\NN$ and let
$h\colon (\NonEmptyParticles )^{k} \rightarrow \R$
be measurable, symmetric and translation invariant.
Then $F_h$ in~\eqref{Fh_function} is called an {\em admissible}
function (of order $k$) if $h(K_{1},\dotsc,K_{k})=0$, whenever either
\begin{equation}
\label{eq_func_interaction_bound}
 \max_{2 \leq i \leq k} d_{H}(K_{i},K_{1}) > r,
\end{equation}
for some given $r>0$, or when $K_{i} = K_{1}$, for some $i\in\{2,\dotsc,k\}$.
If, moreover,
\begin{equation}
\label{eq_h_sup_norm_bound}
 \|h \|_{\infty}:=
 \sup_{K_{1},\dotsc,K_{k} \in \NonEmptyParticles} |h(K_{1},\dotsc,K_{k})|
 < \infty,
\end{equation}
then
$F_h(\Xi_n)$, $n\in\NN$, is called an admissible $U$-statistic of order $k$ (of the Gibbs process $\Xi_n$).
\end{definition}

\begin{example}
\label{ex_facet2}
\rm Consider Example~\ref{ex_facet1}
and recall that
$\bN_{\tilde{V}}:=\{\xi\in\bN\mid{} \xi(\tilde{V}^c)=0\}$.
For $\xi\in \bN_{\tilde{V}}$ and $j\in\{1,\dots,d\}$, define using $Q_j$ from~\eqref{eq_potq}
\begin{equation}\label{facU}
 G_j(\xi):=
 \frac{1}{j!}\int_{\tilde{V}^j} Q_j(K_1,\dotsc,K_{j})
 \,\xi^{(j)}(\dd(K_1,\dotsc,K_{j})).
\end{equation}
Then $G_j$ is an admissible function with $r=2R$,
cf.~\eqref{eq_bounded_set}.
In the special case of $d=2$, facets are
segments in the plane and $G_2(\xi) $ is the total number of
intersections between segments in $\operatorname{supp}\xi$ having different
orientation. For $d=3$, facets are thin circular plates and $G_2(\xi )$
is the total length of intersections of pairs of facets in
$\operatorname{supp}\xi$.
\end{example}

Admissible functions satisfy a moment condition introduced in~\cite[Definition 1.8]{BYY}.

\begin{proposition}\label{prop_pmoment_cond}
If $F_h$ is an admissible function of order $k$ and $p\in\NN$, then
\begin{equation}
\label{eq_pmoment_cond}
 \sup_{n\in\NN}
 \sup_{1\leq q\leq p}
 \sup_{K_1,\dotsc ,K_q\in \cC_n^d}
  \Expect_{K_1,\dotsc ,K_q}\left[\max\{|T(K_1,\Xi_n)|, 1\}^p\right]
 < \infty,
\end{equation}
where $T$ is from~\eqref{eq_u_statistic_def} and
the inner supremum is an essential supremum with respect to
the $q$\textsuperscript{th} factorial moment measure of $\Xi$.
\end{proposition}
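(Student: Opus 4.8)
The plan is to reduce~\eqref{eq_pmoment_cond} to a uniform bound on the moments of a local particle count under the dominating Poisson process. The case $k=1$ is immediate, since then $T(K,\xi)=h(K)$ and hence $\max\{|T(K,\Xi_n)|,1\}^p\le\max\{\|h\|_\infty,1\}^p$ by~\eqref{eq_h_sup_norm_bound}. So assume $k\ge 2$. For an admissible $F_h$ the vanishing property~\eqref{eq_func_interaction_bound} forces the integrand in~\eqref{eq_u_statistic_def} to vanish unless $K_2,\dotsc,K_k\in B(K_1,r)$, so that, for every $\xi\in\bN$,
\begin{equation*}
 |T(K_1,\xi)|
 \le\frac{\|h\|_\infty}{k!}\int\prod_{i=2}^k\I\{K_i\in B(K_1,r)\}\,\xi^{(k-1)}(\dd(K_2,\dotsc,K_k))
 \le\frac{\|h\|_\infty}{k!}\,\xi(B(K_1,r))^{k-1},
\end{equation*}
using $\xi^{(k-1)}\le\xi^{k-1}$ as measures. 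Consequently there is a constant $c=c(\|h\|_\infty,k,p)$ with $\max\{|T(K_1,\xi)|,1\}^p\le c\,(1+\xi(B(K_1,r))^{(k-1)p})$, and, since $\Xi_n\le\Xi$, it suffices to bound $\Expect_{K_1,\dotsc,K_q}\!\big[\Xi(B(K_1,r))^{(k-1)p}\big]$ uniformly in $q\le p$, in $n$ (which has now disappeared from the problem), and in $(K_1,\dotsc,K_q)$ up to an $\alpha^{(q)}$-null set.

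The key geometric input is that $\mu(B(K_1,r))\le v_d(R+r)^d$ for \emph{every} $K_1\in\NonEmptyParticles$. Indeed, by~\eqref{eq_measure_mu} the quantity $\mu(B(K_1,r))$ is the Lebesgue measure of $\{x\in\R^d:K+x\in B(K_1,r)\}$ averaged over $K$, and by~\eqref{eq_bounded_set} we may assume $K\subseteq B(\mathbf{0},R)$; if $K+x\in B(K_1,r)$ then $K_1\subseteq(K+x)\oplus B(\mathbf{0},r)\subseteq B(x,R+r)$, so that $x\in B(y_0,R+r)$ for any fixed $y_0\in K_1$, whence the bound. Now, since for an admissible Gibbs process $\kappa\le 1$, Lemma~\ref{lem_stoch_dom_palm} gives $\Proba^!_{K_1,\dotsc,K_q}\overset{d}{\le}\BP(\Poisson{\lambda\mu}\in\cdot)$ for $\alpha^{(q)}$-a.e.\ $(K_1,\dotsc,K_q)$; as $\xi\mapsto\xi(B(K_1,r))^{m}$ is increasing and non-negative, this yields $\Expect^!_{K_1,\dotsc,K_q}\!\big[\Xi(B(K_1,r))^{m}\big]\le\Expect\big[\Poisson{\lambda\mu}(B(K_1,r))^{m}\big]$, and the right-hand side is the $m$-th moment of a Poisson random variable with mean $\lambda\mu(B(K_1,r))\le\lambda v_d(R+r)^d$, hence bounded by a finite constant $M=M(\lambda,R,r,k,p)$ for $m=(k-1)p$, independently of $n$, $q$ and $(K_1,\dotsc,K_q)$.

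It remains to assemble these estimates. Under $\Proba_{K_1,\dotsc,K_q}$ the configuration equals $\Xi'+\delta_{K_1}+\dotsb+\delta_{K_q}$ with $\Xi'$ distributed according to $\Proba^!_{K_1,\dotsc,K_q}$ (by~\eqref{reducedPalm}), so $\Xi(B(K_1,r))\le\Xi'(B(K_1,r))+q$ and, by convexity of $t\mapsto t^{(k-1)p}$, $\Xi(B(K_1,r))^{(k-1)p}\le 2^{(k-1)p-1}\big(\Xi'(B(K_1,r))^{(k-1)p}+q^{(k-1)p}\big)$. Taking $\Proba_{K_1,\dotsc,K_q}$-expectation, using the previous paragraph and $q\le p$, gives $\Expect_{K_1,\dotsc,K_q}\!\big[\Xi(B(K_1,r))^{(k-1)p}\big]\le 2^{(k-1)p-1}\big(M+p^{(k-1)p}\big)$, a bound free of $n$, $q$ and (up to an $\alpha^{(q)}$-null set) of $(K_1,\dotsc,K_q)$. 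Combined with the reduction of the first paragraph this proves~\eqref{eq_pmoment_cond}. The only points requiring a little care are the uniform-in-$K_1$ control of $\mu(B(K_1,r))$, which is precisely where the deterministic size bound~\eqref{eq_bounded_set} enters, and the passage from the unreduced to the reduced Palm expectation so that Lemma~\ref{lem_stoch_dom_palm} can be invoked; both are routine.
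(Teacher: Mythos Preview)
Your proof is correct and follows essentially the same approach as the paper: bound $|T(K_1,\xi)|$ by a constant times $\xi(B(K_1,r))^{k-1}$ using the finite range~\eqref{eq_func_interaction_bound} and~\eqref{eq_h_sup_norm_bound}, pass from the unreduced to the reduced Palm distribution (adding at most $q\le p$ points), invoke Lemma~\ref{lem_stoch_dom_palm} to dominate by $\Pi_{\lambda\mu}$, and then use a uniform bound on $\mu(B(K_1,r))$ coming from~\eqref{eq_bounded_set} together with the moment properties of Poisson variables. Your geometric argument for $\mu(B(K_1,r))\le v_d(R+r)^d$ via $K_1\subseteq (K+x)\oplus B(\mathbf 0,r)\subseteq B(x,R+r)$ is a slightly cleaner variant of the paper's estimate (it does not even require $K_1$ itself to be bounded), but the overall structure is the same.
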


\begin{proof}
Let $q\in\{1,\dotsc,p\}$. Since $h$ is admissible, property~\eqref{eq_h_sup_norm_bound} implies that
\begin{equation*}
 \max \{|\ScoreFunc(K,\xi)|,1 \}^{p} \leq g_2(K,\xi),
 \quad (K,\xi)\in \NonEmptyParticles\times\bN,
\end{equation*}
where
\begin{equation*}
 g_2(K,\xi):= c\max\Bigl\{1, \left(\xi(B(K,r))^{k-1}\right)^{p}\Bigr\}
\end{equation*}
with $c:=\max\{1,\|h \|_{\infty}/k!\}$. In the following we argue
for $\mu^q$--a.e.\ $(K_1,\dotsc,K_q)\in (\NonEmptyParticles)^q$.
Since $\kappa\le 1$, Lemma~\ref{lem_stoch_dom_palm} shows that
$\Proba_{K_1,\dotsc,K_q}^{\,!}$ is stochastically dominated by
the distribution of $\Pi_{\lambda\mu}$. Therefore,
\begin{align*}
 \Expect_{K_1,\dotsc,K_q}\left[\max\{|T(K_1,\Xi_n)|, 1\}^p\right]
 &\le \Expect[g_2(K_1,\Pi_{\lambda\mu}+\delta_{K_1}+\dotsb+\delta_{K_q})]
 \\&\le c\, \Expect[(q+\Pi_{\lambda\mu}(B(K_1,r)))^{p(k-1)}].
\end{align*}
Using the inequality $(a+b)^{p(k-1)}\le 2^{p(k-1)-1}(a^{p(k-1)}+b^{p(k-1)})$,
for $a,b>0$, we obtain that
\begin{equation*}
 \Expect_{K_1,\dotsc,K_q}\left[\max\{|T(K_1,\Xi_n)|, 1\}^p\right]
 \le c2^{p(k-1)-1}\, (q^{p(k-1)-1}+\Expect[\Pi_{\lambda\mu}(B(K_1,r))^{p(k-1)}]).
\end{equation*}
The random variable $\Pi_{\lambda\mu}(B(K_1,r))$ has a Poisson distribution
with parameter
\begin{align*}
\BE[\Pi_{\lambda\mu}(B(K_1,r))]
&=\lambda\int \I\{d_H(K,K_1)\le r\}\,\mu(\dd K)\\
&=\lambda\iint \I\{d_H(K+x,K_1)\le r\}\,\BQ(\dd K)\, \dd x.
\end{align*}
By~\eqref{eq_centering} and~\eqref{eq_bounded_set} we may assume
that $K_1\subset B(z(K_1),R)$.
Assume that $K\in \NonEmptyParticles$ satisfies
$K\subset B(\mathbf{0},R)$ and that $\|x-z(K_1)\|>R$.
It follows from the definition of the Hausdorff distance
that $d_H(K+x,K_1)\ge \|x-z(K_1)\|-R$. Hence, uniformly in $K_1$ under our assumptions, we obtain the finite bound
\begin{align*}
 \BE[\Pi_{\lambda\mu}(B(K_1,r))]
 &\le \lambda\int \I\{\|x-z(K_1)\|\le R\}\, \dd x
 +\lambda\int \I\{R<\|x-z(K_1)\|\le r+R\}\, \dd x\\
 &= \lambda\int \I\{\|x\|\le r+R\}\, \dd x.
\end{align*}
Thus, the assertion follows from
the moment properties of a Poisson random variable.
\end{proof}

\subsection{Factorization of weighted mixed moments}
\label{sec_fact_wmm}

In this subsection we study an admissible pair $(\Xi ,T)$
defined as follows.

\begin{definition}\label{def_admissible_pair}\rm
We call $(\Xi ,T)$ an admissible pair if $\Xi$ is an admissible Gibbs
process with $\lambda<\PercThreshold{d}$ and the score function $T$
corresponds to an admissible function $F_h$,
cf.\ Definition~\ref{def_admissible_gibbs} and
Definition~\ref{def_admissible_ustatistic}.
\end{definition}

\begin{example}\rm
\label{exampl}
The Gibbs facet process from Example~\ref{ex_facet1} together with the
admissible function $G_j$ from Example~\ref{ex_facet2}
forms an admissible pair, for each $j\in\{1,\dots,d\}$.
\end{example}

Given $n,p,k_1, \dots,k_{p}\in\NN$ and $K_1,\dots ,K_{p}\in \NonEmptyParticles$, we define the {\em weighted mixed moment}
\begin{multline}\label{eq_mixed_moment}
m^{(k_1,\dotsc ,k_{p})}(K_1,\dotsc ,K_{p};n)
\\
:=\CorrFun{p}(K_1,\dotsc ,K_{p})
   \int_\bN
    \left(
     \ScoreFunc(K_1,\xi_{n})^{k_1}\dotsm\ScoreFunc(K_{p},\xi_{n})^{k_{p}}
    \right)
  \,\Proba_{K_1,\dotsc,K_{p}}(\dd\xi).
\end{multline}
In the following all equations and inequalities involving Palm distributions
and correlation functions are to be understood in the a.e.-sense
with respect to the appropriate factorial moment measures
of $\Xi$.

\begin{definition}\label{def_factorize_approximately}\rm
We adapt
the terminology of~\cite{BYY} and say that weighted mixed moments have fast decay of correlations,
if there exist constants $a_l,b_l>0$, $l\in\NN$,
such that
\begin{multline}\label{eq_moment_factorization}
\begin{aligned}
  &\Big|m^{(k_1,\dotsc ,k_{p+q})}(K_1,\dotsc,K_{p+q};n)
  \\
  &- m^{(k_1,\dotsc ,k_{p})}(K_1,\dotsc,K_{p};n)
     m^{(k_{p+1},\dotsc ,k_{p+q})}(K_{p+1},\dotsc,K_{p+q};n)
     \Big|
\end{aligned}
\\
\le
 b_t \exp(-a_t d(\{K_1,\dotsc,K_{p}\},\{K_{p+1},\dotsc ,K_{p+q}\})),
\end{multline}
for all $n,p,q,k_1, \dots,k_{p+q}\in\NN$ and for all $K_1,\dots ,K_{p+q}\in z^{-1}(W_n)$,
where $t:=\sum_{i=1}^{p+q}k_{i}$.
\end{definition}

We intend to use Theorem~\ref{thm_exp_decorrelation} to show
that~\eqref{eq_moment_factorization} holds in our context.  The method
from~\cite{BYY} is used and transformed step by step from point processes on
$\R^{d}$ to particle processes.  Recall that $\prec$ is the total
order of $\NonEmptyParticles$ introduced in Section~\ref{sec_dap} and
that intervals with respect to $\prec$ are in
$\BorelOf{\NonEmptyParticles}$.
In particular, for $K\in\NonEmptyParticles$,
$(-\infty,K)=\{L\in\NonEmptyParticles\mid{}\,L \prec K\}$.

Let $o$ be the zero-measure, i.e., $o(\Psi)=0$, for all
$\Psi \in \cB(\NonEmptyParticles )$.
Abbreviate $[l] := \{1,\dotsc,l\}$, for $l\in\NN$.
We define a difference operator for a measurable function
$\psi:\bN \rightarrow \R$, $l \in \NN \cup \{ 0\}$ and
$K_{1},\dotsc,K_{l} \in \NonEmptyParticles $ by
\begin{equation} \label{eq_diff_operator_def}
 D^{l}_{K_{1},\dotsc,K_{l}} \psi(\xi)
  :=
  \begin{cases}
   \sum_{J \subseteq [l]}(-1)^{l-|J|}
  \psi(\xi_{(-\infty,K_{\ast})} +
    \sum_{j \in J}\delta_{K_{j}})
    &\text{if $l>0$,}
   \\
   \psi(o)
    &\text{if $l=0$,}
  \end{cases}
\end{equation}
where $K_{\ast} := \min \{K_{1},\dotsc,K_{l} \}$ with respect to $\prec$.
We say that $\psi$ is $\prec$-continuous at $\infty$, if
$\lim_{K \uparrow \R^{d}}\psi(\xi_{(-\infty,K)}) = \psi(\xi)$, for all $\xi \in \bN$.

We use the following \emph{factorial moment expansion} (FME) proved
in~\cite[Theorem 3.1]{BMS} on a general Polish space.
For stronger results in the special case of a Poisson process
we refer to~\cite{Last14} and~\cite[Chapter 19]{LastPenrose17}.

\begin{theorem}
Let $\psi\colon\bN\rightarrow \R$ be $\prec$-continuous at $\infty$.
Assume that, for all $l \in \NN$,
\begin{align}
\label{eq_FMEcondition1}
 \int_{(\NonEmptyParticles )^{l}}
  \Expect^{!}_{K_{1},\dotsc,K_{l}}
  [|D^{l}_{K_{1},\dotsc,K_{l}}\psi(\Xi)| ]
  \CorrFun{l}(K_{1},\dotsc,K_{l})
  \,\mu^l( \dd(K_{1}, \dotsc, K_{l} ))
 < \infty
\intertext{and}
\label{eq_FMEcondition2}
 \lim_{l \rightarrow \infty}
  \frac{1}{l!}
   \int_{(\NonEmptyParticles )^{l}}
    \Expect^{!}_{K_{1},\dotsc,K_{l}}
     [D^{l}_{K_{1},\dotsc,K_{l}}\psi(\Xi) ]
    \CorrFun{l}(K_{1},\dotsc,K_{l})
    \,\mu^l( \dd(K_{1}, \dotsc, K_{l}) )
   = 0.
\end{align}
Then, $\Expect [\psi(\Xi)]$ has the FME
\begin{equation}
\label{eq_FME}
 \Expect [\psi(\Xi)]
  =
  \psi(o) +
  \sum_{l=1}^{\infty}
  \frac{1}{l!}
  \int_{(\NonEmptyParticles )^{l}}
    D^{l}_{K_{1},\dotsc,K_{l}}\psi(o)
    \CorrFun{l}(K_{1},\dotsc,K_{l})
  \,\mu^l( \dd(K_{1}, \dotsc, K_{l}) )
.
\end{equation}
\end{theorem}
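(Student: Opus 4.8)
The plan is to derive~\eqref{eq_FME} by iterated telescoping along the total order $\prec$, converting each step into correlation-function form with the Palm formula~\eqref{eq_palm}--\eqref{reducedPalm} and the identity $\alpha^{(l)}=\CorrFun{l}\,\mu^{l}$ coming from~\eqref{eq_correlation_GNZ}. First I would prove the one-step identity
\[
 \Expect[\psi(\Xi)]=\psi(o)+\Expect\Bigl[\int D^{1}_{K}\psi(\Xi)\,\Xi(\dd K)\Bigr].
\]
It holds pathwise: list the points of a realisation in increasing $\prec$-order and telescope the increments $\psi(\Xi_{(-\infty,K]})-\psi(\Xi_{(-\infty,K)})$; the $\prec$-continuity at $\infty$ identifies the upper limit with $\psi(\Xi)$, and the lower end contributes $\psi(o)$ since $\Xi_{(-\infty,K)}\downarrow o$ as $K$ decreases (immediate when $\Xi$ is a.s.\ finite, which is the only case actually used below, as $\psi$ will be applied to $\Xi_n=\Xi_{W_n}$, finite because $\mu(z^{-1}(W_n))=n<\infty$). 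Since $D^{1}_{K}\psi$ sees $\Xi$ only through $\Xi_{(-\infty,K)}$, removing $\delta_{K}$ is irrelevant, so taking expectations and applying~\eqref{eq_palm}--\eqref{reducedPalm} turns the correction into $\int\Expect^{!}_{K}[D^{1}_{K}\psi(\Xi)]\,\CorrFun{1}(K)\,\mu(\dd K)$.

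Next I would iterate this, applying the one-step identity to $\eta\mapsto D^{1}_{K_1}\psi(\eta)$ on $\bN_{(-\infty,K_1)}$, and so on. An induction on the number of steps shows that after $l$ steps the ordered $l$-fold sum ranges over strictly $\prec$-decreasing tuples $K_1\succ\dots\succ K_l$ of points of $\Xi$, with summand exactly the inclusion--exclusion operator $D^{l}_{K_1,\dots,K_l}\psi$ of~\eqref{eq_diff_operator_def}; by symmetry of $D^{l}$ this ordered sum equals $\frac{1}{l!}\int D^{l}_{K_1,\dots,K_l}\psi(\cdot)\,\Xi^{(l)}(\dd(K_1,\dots,K_l))$, which explains the factor $1/l!$. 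After $n$ steps one gets, pathwise,
\[
 \psi(\Xi)=\sum_{l=0}^{n-1}\frac{1}{l!}\int D^{l}_{K_1,\dots,K_l}\psi(o)\,\Xi^{(l)}(\dd(K_1,\dots,K_l))+\frac{1}{n!}\int D^{n}_{K_1,\dots,K_n}\psi(\Xi)\,\Xi^{(n)}(\dd(K_1,\dots,K_n)),
\]
using that $D^{l}_{K_1,\dots,K_l}\psi(o)$ is independent of $\Xi$. Taking expectations, the sum becomes $\sum_{l=0}^{n-1}\frac{1}{l!}\int D^{l}_{K_1,\dots,K_l}\psi(o)\,\CorrFun{l}(K_1,\dots,K_l)\,\mu^{l}(\dd(K_1,\dots,K_l))$ (the $l=0$ term giving $\psi(o)$), while $D^{n}_{K_1,\dots,K_n}\psi(\Xi)$ depends on $\Xi$ only via $\Xi_{(-\infty,K_\ast)}$, $K_\ast=\min\{K_1,\dots,K_n\}$ in the order $\prec$, which excludes $K_1,\dots,K_n$; hence $D^{n}_{K_1,\dots,K_n}\psi(\Xi)=D^{n}_{K_1,\dots,K_n}\psi(\Xi-\delta_{K_1}-\dots-\delta_{K_n})$ off the diagonal and~\eqref{reducedPalm} identifies the last term with
\[
 R_n:=\frac{1}{n!}\int_{(\NonEmptyParticles)^{n}}\Expect^{!}_{K_1,\dots,K_n}[D^{n}_{K_1,\dots,K_n}\psi(\Xi)]\,\CorrFun{n}(K_1,\dots,K_n)\,\mu^{n}(\dd(K_1,\dots,K_n)).
\]
Assumption~\eqref{eq_FMEcondition1} makes each $R_n$ a well-defined finite number and justifies the Fubini interchanges used along the iteration, and~\eqref{eq_FMEcondition2} is precisely $R_n\to0$; hence the partial sums converge to $\Expect[\psi(\Xi)]$, which is~\eqref{eq_FME}.

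The hard part will be the combinatorial bookkeeping of the iteration: one must track how the truncations $\xi\mapsto\xi_{(-\infty,\,\cdot\,)}$ compose and check that the signs collapse to the inclusion--exclusion pattern of~\eqref{eq_diff_operator_def} without double-counting tuples. A secondary technicality is the pathwise telescoping and the limit $n\to\infty$ when $\Xi$ is not a.s.\ finite, which is exactly what the $\prec$-continuity at $\infty$ and~\eqref{eq_FMEcondition1} are there to control; since the present paper only evaluates $\psi$ at the finite configuration $\Xi_n$, the telescoping is a finite sum and this issue disappears. The general statement is~\cite[Theorem 3.1]{BMS}, which we invoke.
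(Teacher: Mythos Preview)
The paper does not give its own proof of this theorem: it is stated with the preface ``proved in~\cite[Theorem 3.1]{BMS} on a general Polish space'' and no argument is supplied. Your proposal ultimately does the same thing---you invoke \cite[Theorem 3.1]{BMS} in the last line---so the two are in agreement. The additional content you provide is a correct sketch of the BMS proof itself (pathwise telescoping along $\prec$, conversion of factorial-measure integrals to correlation-function form via~\eqref{eq_palm}--\eqref{reducedPalm} and~\eqref{eq_correlation_GNZ}, and identification of the remainder with the quantity in~\eqref{eq_FMEcondition2}), which the paper does not reproduce. Your observation that the applications in Section~\ref{sec_fact_wmm} only need the finite-configuration case $\Xi_n$, where the telescoping is a finite sum and the $\prec$-continuity issues are trivial, is also accurate.
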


\noindent For an admissible pair $(\ScoreFunc, \Xi )$,
$K_{1},\dotsc,K_{p} \in \NonEmptyParticles $ and $ \xi \in \bN $, set
\begin{align}
\label{eq_mixed_product1}
  \psi_{k_{1},\dotsc,k_{p}}(K_{1},\dotsc,K_{p};\xi)
   :=
   \prod_{i=1}^{p}\ScoreFunc(K_{i},\xi)^{k_{i}},
\\
\label{eq_mixed_product2}
  \psi^{!}_{k_{1},\dotsc,k_{p}}(K_{1},\dotsc,K_{p};\xi)
   :=
    \prod_{i=1}^{p}
     \ScoreFunc\Bigl(K_{i},\xi+\sum_{j=1}^{p}\delta_{K_{j}}\Bigr)^{k_{i}}
,
\end{align}
with $k_{1},\dotsc,k_{p} \geq 1$.
It holds that
$
\Expect_{K_{1},\dotsc,K_{p}}[\psi(\Xi_n)]=
\Expect^{!}_{K_{1},\dotsc,K_{p}}[\psi^{!}(\Xi_n)]$.
Given $p\in\NN$ and $K_1,\ldots,K_p\in \NonEmptyParticles$
we denote by $\CorrFun{l}^{(K_{1},\dotsc,K_{p})}$
the $l$\textsuperscript{th} correlation function of $\BP^{!}_{K_{1},\dotsc,K_{p}}$.
Further we let $(\BP^{!}_{K_{1},\dotsc,K_{p}})^{!}_{L_{1},\dotsc,L_{l}}$,
$L_1,\ldots,L_l\in \NonEmptyParticles$
denote the reduced Palm distributions  of $\BP^{!}_{K_{1},\dotsc,K_{p}}$.
It is easy to show that
\begin{align}\label{reccorr}
\CorrFun{p}(K_{1},\dotsc,K_{p})
 \CorrFun{l}^{(K_{1},\dotsc,K_{p})}(L_{1},\dotsc,L_{l})
  =
 \CorrFun{p+l}(K_{1},\dotsc,K_{p},L_{1},\dotsc,L_{l})
 \,,
\end{align}
and
\begin{align}\label{recPalm}
(\BP^{!}_{K_{1},\dotsc,K_{p}})^{!}_{L_{1},\dotsc,L_{l}}=
\BP^{!}_{K_{1},\dotsc,K_{p},L_1,\ldots,L_l}.
\end{align}

\begin{lemma}\label{lem_FME}
For distinct $K_{1},\dotsc,K_{p} \in \cC^{(d)}$, $k_{1},\dotsc,k_{p},n\in\NN$,
$t_p:=\sum_{i=1}^pk_i$ and $k$ the order of the $U$-statistic,
the functional $\psi^{!}$ admits the FME
\begin{multline*}
\Expect^{!}_{K_{1},\dotsc,K_{p}}
 [\psi^{!}_{k_{1},\dotsc,k_{p}}(K_{1},\dotsc,K_{p};\Xi_n)]
 =
 \psi^{!}_{k_{1},\dotsc,k_{p}}(K_{1},\dotsc,K_{p};o)
 \\
 +
 \sum_{l=1}^{t_p(k-1)}
  \frac{1}{l!}
   \int_{(\NonEmptyParticles )^{l}}
    D^{l}_{L_{1},\dotsc,L_{l}}
    \psi^{!}_{k_{1},\dotsc,k_{p}}(K_{1},\dotsc,K_{p};o)
    \CorrFun{l}^{(K_{1},\dotsc,K_{p})}(L_{1},\dotsc,L_{l})
    \,\mu^l(\dd(L_{1},\dotsc,L_{l})).
\end{multline*}
\end{lemma}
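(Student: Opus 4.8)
The plan is to apply the factorial moment expansion stated above not to $\Xi$ but to the reduced Palm distribution $\BP^!_{K_1,\dotsc,K_p}$, taking as test functional $\psi(\xi):=\psi^!_{k_1,\dotsc,k_p}(K_1,\dotsc,K_p;\xi_n)$ with $\psi^!_{k_1,\dotsc,k_p}$ from~\eqref{eq_mixed_product2}. This is legitimate: $\NonEmptyParticles$ is Polish, $\BP^!_{K_1,\dotsc,K_p}$ is a point process on it whose $l$\textsuperscript{th} correlation function is $\CorrFun{l}^{(K_1,\dotsc,K_p)}$ and whose reduced Palm distributions are $\BP^!_{K_1,\dotsc,K_p,L_1,\dotsc,L_l}$ by~\eqref{recPalm}; by Lemma~\ref{lem_papangelou_palm} this process is again Gibbsian with Papangelou intensity bounded by $1$, so also $\CorrFun{l}^{(K_1,\dotsc,K_p)}\le\lambda^l$. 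Once the two hypotheses of that expansion are verified for this $\psi$, identity~\eqref{eq_FME} reads verbatim as the asserted formula, save that its sum a priori runs over all $l\ge1$; the remaining point is that it truncates at $l=t_p(k-1)$.

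The truncation is the structural heart of the matter. By Definition~\ref{def_admissible_ustatistic} and~\eqref{eq_u_statistic_def}, $T(K,\cdot)$ is an integral of the bounded admissible kernel $h$ against $\xi^{(k-1)}$; admissibility forces the integrand to vanish unless all $k-1$ arguments lie in the Hausdorff ball $B(K,r)$, so $T(K,\xi)$ is a \emph{finite} sum depending only on $\xi_{B(K,r)}$, and on a finite configuration it is a symmetric polynomial of degree $k-1$ in the Dirac masses. Hence $\psi^!_{k_1,\dotsc,k_p}(K_1,\dotsc,K_p;\xi)=\prod_{i=1}^p T(K_i,\xi+\sum_j\delta_{K_j})^{k_i}$ is a polynomial functional of $\xi$ of degree $\sum_{i=1}^p k_i(k-1)=t_p(k-1)$, the inner shift by $\sum_j\delta_{K_j}$ and the restriction to $W_n$ not raising the degree. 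Now the operator $D^l_{L_1,\dotsc,L_l}$ of~\eqref{eq_diff_operator_def} is an $l$\textsuperscript{th}-order discrete difference $\Delta_{L_1}\dotsm\Delta_{L_l}$ evaluated at the fixed base configuration $\xi_{(-\infty,L_\ast)}$ (with $L_\ast$ the $\prec$-minimum of $L_1,\dotsc,L_l$), and $l$ such differences annihilate any polynomial functional of degree strictly below $l$ — by the usual M\"obius inversion only the \emph{top} $l$-fold mixed term survives, and a degree-$m$ functional with $m<l$ has none. Therefore $D^l_{L_1,\dotsc,L_l}\psi\equiv0$ for $l>t_p(k-1)$; in particular hypothesis~\eqref{eq_FMEcondition2} is trivial, its integrand being eventually zero.

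What remains is hypothesis~\eqref{eq_FMEcondition1} for the finitely many $l\le t_p(k-1)$, plus $\prec$-continuity of $\psi$ at $\infty$. Both follow from the finite interaction range $r$ of $h$ together with the deterministic particle bound $R$: $\psi$ depends on $\xi$ only through its restriction to $z^{-1}(W_n)\cap\bigcup_{i=1}^p B(K_i,r)$, a set bounded in $d_H$ and hence charged only finitely by every $\xi\in\bN$, so $\psi(\xi_{(-\infty,L)})=\psi(\xi)$ as soon as $(-\infty,L)$ contains that finite subconfiguration (giving $\prec$-continuity at $\infty$), and $D^l_{L_1,\dotsc,L_l}\psi$ vanishes unless every $L_j$ lies in that set. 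Its $\mu$-measure is finite (bounded by $v_d(r+2R)^d$ per particle centre, computed exactly as in the proof of Proposition~\ref{prop_pmoment_cond}); on it $|D^l_{L_1,\dotsc,L_l}\psi(\Xi)|$ is bounded by a fixed power of $2$ times $\|h\|_\infty^{t_p}$ times a polynomial in the counts $\Xi(B(K_i,r))$; and $\CorrFun{l}^{(K_1,\dotsc,K_p)}\le\lambda^l$. Thus~\eqref{eq_FMEcondition1} reduces to finiteness of mixed Palm moments of these counts, which follows from Lemma~\ref{lem_stoch_dom_palm} (domination of $\BP^!_{K_1,\dotsc,K_p}$ by $\Pi_{\lambda\mu}$) and the moment formula for Poisson counts, uniformly in $K_1,\dotsc,K_p$ as in Proposition~\ref{prop_pmoment_cond}. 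The only genuine care is bookkeeping — keeping the interaction range $r$ and the particle radius $R$ apart, and checking that the Palm-process versions of domination and moment estimates hold uniformly in $K_1,\dotsc,K_p$ — after which the factorial moment expansion applies to $\psi$ and $\BP^!_{K_1,\dotsc,K_p}$ and yields the claimed identity with the sum truncated at $l=t_p(k-1)$.
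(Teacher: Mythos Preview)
Your proposal is correct and follows essentially the same approach as the paper's proof: verify $\prec$-continuity from the finite range $r$, establish that $D^{l}$ vanishes for $l>t_p(k-1)$ because $\psi^{!}$ is (a sum of) $U$-statistics of order at most $t_p(k-1)$, show that $D^{l}$ vanishes unless all $L_j$ lie in a bounded neighbourhood of $\{K_1,\dotsc,K_p\}$, and then bound the remaining finitely many integrals in~\eqref{eq_FMEcondition1} via Palm moment finiteness. The only cosmetic differences are that the paper cites \cite[Lemma~5.1]{BYY} for the polynomial-degree fact you argue directly, uses the (harmlessly larger) radius $2r$ rather than $r$, and appeals to Palm moment finiteness without explicitly invoking Lemma~\ref{lem_stoch_dom_palm}.
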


\begin{proof}
We abbreviate $\psi_{k_{1},\dotsc,k_{p}}(K_{1},\dotsc,K_{p};\Xi)$ by
$\psi(K_{1},\dotsc,K_{p};\Xi)$.  The radius bound $r$
from~\eqref{eq_func_interaction_bound} for the function $h$ implies
that $\psi^{!}$ is $\prec$-continuous at $\infty$.  In~\cite[Lemma
5.1]{BYY} it is shown that $\psi^{!}$ is the sum of $U$-statistics
of orders not larger than $t_p(k-1)$.
Thus, for $l \in (t_p(k-1),\infty)$
and all $L_{1},\dotsc,L_{l} \in \NonEmptyParticles$, we have
\begin{equation}
\label{eq_high_order_diff_stat}
 D^{l}_{L_{1},\dotsc,L_{l}}
  \psi^{!}(K_{1},\dotsc,K_{p};\xi)
 = 0
.
\end{equation}
This implies that~\eqref{eq_FMEcondition1}, for $l \in (t_p(k-1),\infty)$, and~\eqref{eq_FMEcondition2} are satisfied for $\psi^{!}$ from~\eqref{eq_mixed_product2}.
We need to verify~\eqref{eq_FMEcondition1}, for $l \in [1,t_p(k-1)]$.
For $L_{1},\dotsc,L_{l} \in \NonEmptyParticles $, $\xi\in\bN$ and $J \subseteq [l]$, set
\begin{equation*}
 \xi_{J} := \xi_{(-\infty,L_{\ast})} + \sum_{j\in J} \delta_{L_{j}},
\end{equation*}
where $L_{\ast}:=\min \{L_{1},\dotsc,L_{l} \}$ and $(-\infty,L_{\ast})$
are with respect to the order $\prec$.

The difference operator $D^{l}_{L_{1},\dotsc,L_{l}}$ vanishes like
in~\eqref{eq_high_order_diff_stat} as soon as
$L_{m} \notin \bigcup_{i=1}^{p}B(K_{i},2r)$, for some $m \in [l]$.  To
prove this, expand~\eqref{eq_diff_operator_def} to obtain
\begin{multline*}
 D^{l}_{L_{1},\dotsc,L_{l}}
 \psi^{!}(K_{1},\dotsc,K_{p};\xi)
 =
  \sum_{J \subseteq [l], m \notin J}
   (-1)^{l-|J|}
    \psi^{!}(K_{1},\dotsc,K_{p};\xi_{J})
 \\
  +
  \sum_{J \subseteq [l], m \notin J}
   (-1)^{l-|J|-1}
    \psi^{!}(K_{1},\dotsc,K_{p};\xi_{J\cup \{m \}})
 =
  0
 \,,
\end{multline*}
since, for fixed $J\subseteq [l]$ and $m \notin J $,
$\psi^{!}(K_{1},\dotsc,K_{p};\xi_{J})=\psi^{!}(K_{1},\dotsc,K_{p};\xi_{J\cup\{m \}})$.
Then, we have
\begin{align*}
\psi^{!}(K_{1},\dotsc,K_{p};\xi_{J})
&\leq
  \prod_{i=1}^{p}
   \Vert h \Vert^{k_{i}}_{\infty}
    \left(
     \xi \left( \bigcup_{i=1}^{p}B(K_{i},2r)\right) + |J| + p
    \right)^{k_{i}(k-1)}
\\
&\leq
  \Vert h \Vert^{t_p}_{\infty}
   \left(
    \xi \left( \bigcup_{i=1}^{p}B(K_{i},2r)\right) + |J| + p
   \right)^{t_p(k-1)}
.
\end{align*}
Using this for difference operator we get
\begin{align}
\nonumber
| D^{l}_{L_{1},\dotsc,L_{l}}
 \psi^{!}(K_{1},\dotsc,K_{p};\xi) |
&\leq
  \Vert h \Vert^{t_p}_{\infty}
   \sum_{J \subseteq [l]}
    \left(
     \xi\left(\bigcup_{i=1}^{p}B(K_{i},2r)\right) + |J| + p
    \right)^{t_p(k-1)}
\\
\label{eq_bound_difference}
&\leq
  \Vert h \Vert^{t_p}_{\infty}
  2^{l}
   \left(
    \xi\left(\bigcup_{i=1}^{p}B(K_{i},2r)\right) + l + p
   \right)^{t_p(k-1)}
.
\end{align}
Using~\eqref{recPalm}, the defining equation~\eqref{eq_palm}
and~\eqref{eq_bound_difference} results in
\begin{align*}
&\frac{1}{l!}
  \int_{(\NonEmptyParticles )^{l}}
   (\Expect^{!}_{K_{1},\dotsc,K_{p}})^{!}_{L_{1},\dotsc,L_{l}}
    [|D^{l}_{L_{1},\dotsc,L_{l}}
     \psi^{!}(K_{1},\dotsc,K_{p};\Xi_{n}) |]
\\
&\times
  \CorrFun{l}^{(K_{1},\dotsc,K_{p})}(L_{1},\dotsc,L_{l})
  \,\mu^l(\dd(L_{1},\dotsc,L_{l}))
\\
&=
\frac{1}{l!}
 \int_{(\NonEmptyParticles )^{l}}
  \Expect^{!}_{K_{1},\dotsc,K_{p},L_{1},\dotsc,L_{l}}
   [|D^{l}_{L_{1},\dotsc,L_{l}}\psi^{!}(K_{1},\dotsc,K_{p};\Xi_{n}) |]
\\
&\times
 \CorrFun{l}^{(K_{1},\dotsc,K_{p})}(L_{1},\dotsc,L_{l})
 \,\mu^l(\dd(L_{1},\dotsc,L_{l}))
\\
\leq
&\Vert h \Vert^{t_p}_{\infty} 2^{l}
 \Expect_{K_{1},\dotsc,K_{p}}
  \left[
   \Xi
    \left(
     \bigcup_{i=1}^{p}B(K_{i},r)
    \right)^{l}
    \left(
     \Xi
      \left(
       \bigcup_{i=1}^{p}B(K_{i},r)
      \right)+l+p
    \right)^{t_p(k-1)}
  \right]
\,.
\end{align*}
Since $\Xi$ has all moments under the Palm distribution the finiteness of the last term and hence the validity of the condition  for $l \in [1,t_p(k-1)]$ follows. This justifies the FME expansion.
\end{proof}

\begin{theorem}
\label{thm_score_factorizes}
Let $(\ScoreFunc,\Xi )$ be an admissible pair. Then the weighted moments have fast decay of correlations.
\end{theorem}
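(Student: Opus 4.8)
The plan is to combine the factorial moment expansion of Lemma~\ref{lem_FME} with the correlation decay of Theorem~\ref{thm_exp_decorrelation}, adapting the scheme of~\cite[Section~5]{BYY}. First I would use $\Expect_{K_1,\dotsc,K_p}[\psi_{k_1,\dotsc,k_p}(K_1,\dotsc,K_p;\Xi_n)]=\Expect^{!}_{K_1,\dotsc,K_p}[\psi^{!}_{k_1,\dotsc,k_p}(K_1,\dotsc,K_p;\Xi_n)]$ to write
\begin{equation*}
 m^{(k_1,\dotsc,k_p)}(K_1,\dotsc,K_p;n)=\CorrFun{p}(K_1,\dotsc,K_p)\,\Expect^{!}_{K_1,\dotsc,K_p}\bigl[\psi^{!}_{k_1,\dotsc,k_p}(K_1,\dotsc,K_p;\Xi_n)\bigr].
\end{equation*}
Multiplying the expansion of Lemma~\ref{lem_FME} by $\CorrFun{p}(K_1,\dotsc,K_p)$ and invoking the product rule~\eqref{reccorr} then represents $m^{(k_1,\dotsc,k_p)}(K_1,\dotsc,K_p;n)$ as the \emph{finite} sum over $0\le l\le t_p(k-1)$ of
\begin{equation*}
 \frac{1}{l!}\int_{(\NonEmptyParticles)^{l}}D^{l}_{L_1,\dotsc,L_l}\psi^{!}_{k_1,\dotsc,k_p}(K_1,\dotsc,K_p;o)\,\CorrFun{p+l}(K_1,\dotsc,K_p,L_1,\dotsc,L_l)\,\mu^{l}(\dd(L_1,\dotsc,L_l)),
\end{equation*}
the $l=0$ term being $\CorrFun{p}(K_1,\dotsc,K_p)\,\psi^{!}_{k_1,\dotsc,k_p}(K_1,\dotsc,K_p;o)$.

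Two facts from the proof of Lemma~\ref{lem_FME} drive the argument. First, \emph{localization}: $D^{l}_{L_1,\dotsc,L_l}\psi^{!}_{k_1,\dotsc,k_p}(K_1,\dotsc,K_p;o)$ vanishes unless each $L_j\in\bigcup_{i=1}^{p}B(K_i,2r)$, and on that set has modulus at most $\|h\|_\infty^{t_p}2^{l}(l+p)^{t_p(k-1)}$, which is~\eqref{eq_bound_difference} at $\xi=o$. Second, \emph{product structure}: since $\ScoreFunc(K_i,\cdot)$ depends only on the restriction of its argument to $B(K_i,r)$, as soon as $\delta:=d(\{K_1,\dotsc,K_p\},\{K_{p+1},\dotsc,K_{p+q}\})>4r$ the families $\bigcup_{i\le p}B(K_i,2r)$ and $\bigcup_{i>p}B(K_i,2r)$ are disjoint, $\psi^{!}_{k_1,\dotsc,k_{p+q}}(K_1,\dotsc,K_{p+q};\chi)=\psi^{!}_{k_1,\dotsc,k_p}(K_1,\dotsc,K_p;\chi)\,\psi^{!}_{k_{p+1},\dotsc,k_{p+q}}(K_{p+1},\dotsc,K_{p+q};\chi)$, and---crucially, because the difference operators are taken at $\xi=o$, which makes the truncation $\xi_{(-\infty,K_\ast)}$ in~\eqref{eq_diff_operator_def} vacuous---the alternating sum over $J\subseteq[l]$ defining $D^{l}$ factorizes once the $l$ integration particles are split into those near $\{K_1,\dotsc,K_p\}$ (index set $A$) and those near $\{K_{p+1},\dotsc,K_{p+q}\}$ (index set $B$). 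Reindexing by $a=|A|$, $b=|B|$ and using $\tfrac{1}{l!}\binom{l}{a}=\tfrac{1}{a!\,b!}$ together with the symmetry of the difference operators, of $\mu^{\bullet}$ and of the correlation functions in the $L$-variables, I would obtain for $\delta>4r$ that $m^{(k_1,\dotsc,k_{p+q})}(K_1,\dotsc,K_{p+q};n)$ equals
\begin{multline*}
 \sum_{a=0}^{t_p(k-1)}\sum_{b=0}^{t_q(k-1)}\frac{1}{a!\,b!}\iint D^{a}_{\vec{L}_A}\psi^{!}_{k_1,\dotsc,k_p}(K_1,\dotsc,K_p;o)\,D^{b}_{\vec{L}_B}\psi^{!}_{k_{p+1},\dotsc,k_{p+q}}(K_{p+1},\dotsc,K_{p+q};o)
 \\
 \times\CorrFun{p+q+a+b}(K_1,\dotsc,K_{p+q},\vec{L}_A,\vec{L}_B)\,\mu^{a}(\dd\vec{L}_A)\,\mu^{b}(\dd\vec{L}_B),
\end{multline*}
where $t_q:=\sum_{i=p+1}^{p+q}k_i$, whereas the product $m^{(k_1,\dotsc,k_p)}(K_1,\dotsc,K_p;n)\,m^{(k_{p+1},\dotsc,k_{p+q})}(K_{p+1},\dotsc,K_{p+q};n)$ is the same double sum with $\CorrFun{p+a}(K_1,\dotsc,K_p,\vec{L}_A)\,\CorrFun{q+b}(K_{p+1},\dotsc,K_{p+q},\vec{L}_B)$ replacing $\CorrFun{p+q+a+b}(\dotsc)$.

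Subtracting the two representations, I would bound the difference of the correlation functions in each term by Theorem~\ref{thm_exp_decorrelation}, applied to the split $(\{K_1,\dotsc,K_p\}\cup\vec{L}_A,\ \{K_{p+1},\dotsc,K_{p+q}\}\cup\vec{L}_B)$; since $\vec{L}_A$ lies within $2r$ of $\{K_1,\dotsc,K_p\}$ and $\vec{L}_B$ within $2r$ of $\{K_{p+1},\dotsc,K_{p+q}\}$, the pseudo-distance between these two sets is at least $\delta-4r$, so each bracket is at most $\lambda^{p+q+a+b}\min(p+a,q+b)\,c_1 e^{4rc_2}e^{-c_2\delta}$. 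By the localization fact each surviving integral is finite: $\mu$ of a $2r$-ball is at most a constant $M=M(r,R)$ by~\eqref{eq_bounded_set}, the difference operators are bounded as above, and---decisively---the sums over $a\le t_p(k-1)$ and $b\le t_q(k-1)$ contain only finitely many terms. Hence the left-hand side of~\eqref{eq_moment_factorization} is a finite sum whose total is at most $b_t\,e^{-a_t\delta}$ with $a_t:=c_2$ and $b_t$ depending on the weights only through $t:=\sum_{i=1}^{p+q}k_i$ (using $t_p+t_q=t$ and $p+q\le t$) and on the fixed data $\lambda,k,r,R,\|h\|_\infty,c_1,c_2,\BQ$. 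For the remaining range $\delta\le4r$ the left-hand side of~\eqref{eq_moment_factorization} is at most $2\sup|m^{(\cdots)}|$, which is finite uniformly in $n$ and in the particles by $\CorrFun{p}\le\lambda^{p}$, Hölder's inequality and the moment bound of Proposition~\ref{prop_pmoment_cond}; enlarging $b_t$ handles this range too, which yields Definition~\ref{def_factorize_approximately}.

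The step I expect to be the main obstacle is the product structure: carefully justifying the factorization of the difference operators. One must check that the $\prec$-truncation in~\eqref{eq_diff_operator_def} is genuinely irrelevant here (it is, because the expansion is performed around the empty configuration $o$), that the separation $\delta>4r$ forces every integration particle into exactly one of the two clusters of balls $B(K_i,2r)$, and that the induced partition $[l]=A\sqcup B$ makes $\sum_{J\subseteq[l]}(-1)^{l-|J|}\psi^{!}_{k_1,\dotsc,k_{p+q}}(K_1,\dotsc,K_{p+q};\sum_{j\in J}\delta_{L_j})$ collapse into a product of two analogous alternating sums---which uses the product identity for $\psi^{!}$, valid once $\delta>r$. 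The subsequent bookkeeping of the finitely many surviving terms, and the verification that every constant depends on the weights only through $t$, is routine but must be carried out with care to match Definition~\ref{def_factorize_approximately} exactly.
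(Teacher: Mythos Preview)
Your proposal is correct and follows essentially the same route as the paper: expand each weighted mixed moment via the finite FME of Lemma~\ref{lem_FME}, use~\eqref{reccorr} to absorb the factor $\CorrFun{p}$ into a single correlation function $\CorrFun{p+l}$, localize the integration to balls around the $K_i$, exploit the product structure of $\psi^{!}$ at $\xi=o$ to split the $(p+q)$-expansion into a double sum indexed by $(a,b)$, subtract, and apply Theorem~\ref{thm_exp_decorrelation} termwise to the difference of correlation functions. The paper treats the short-range case by declaring ``without loss of generality $s>8u$'' for $u:=\max(4R+R_\varphi,r)$, which amounts to exactly your handling of $\delta\le 4r$ via Proposition~\ref{prop_pmoment_cond}; your explicit remark that the $\prec$-truncation in~\eqref{eq_diff_operator_def} is vacuous at $\xi=o$ is the clean justification for the factorization that the paper performs by expanding $D^{l}$ directly as $\sum_{J\subseteq[l]}(-1)^{l-|J|}\psi^{!}(\cdots;\sum_{j\in J}\delta_{L_j})$.
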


\begin{proof}
Let $p,q,k_1,\dots ,k_{p+q}\in\NN$ be fixed. Let $u:=\max(4R+R_\varphi,r)$, with $R_\varphi$ being the finite interaction range of the admissible particle process as outlined in Definition~\ref{def_admissible_gibbs} and taking into account the particle size from~\eqref{eq_bounded_set}, as well as~\eqref{eq_func_interaction_bound}.
Given $n\in\NN$, $K_{1},\dotsc,K_{p+q}\in z^{-1}(W_{n})$, we set
\begin{equation*}
s
 :=
 d(\{K_{1},\dotsc,K_{p} \},
   \{K_{p+1},\dotsc, K_{p+q} \})
 .
\end{equation*}
Without loss of generality we assume that $s \in (8u,\infty)$.  Put
$t$ as in Definition~\ref{def_factorize_approximately} and $t_p$ as in
Lemma~\ref{lem_FME} respectively and let
$t_q:=\sum_{i=p+1}^{p+q}k_{i}$.

Then, using Lemma~\ref{lem_FME},~\eqref{eq_high_order_diff_stat}
and~\eqref{reccorr}, we obtain
\begin{align*}
&
 m^{(k_{1},\dotsc,k_{p+q})}(K_{1},\dotsc,K_{p+q};n)
=
 \Expect^{!}_{K_{1},\dotsc,K_{p+q}}
  [\psi^{!}(K_{1},\dotsc,K_{p+q};\Xi_{n})]
   \CorrFun{p+q}(K_{1},\dotsc,K_{p+q})
\\
&
 =
  \sum_{l=0}^{t(k-1)} \frac{1}{l!}
   \int_{(\cC_{n}^{d})^{l}}
    D^{l}_{L_{1},\dotsc,L_{l}}
     \psi^{!}(o)
      \CorrFun{l+p+q}(K_{1},\dotsc,K_{p+q},L_{1},\dotsc,L_{l})
   \,\mu^l(\dd(L_{1},\dotsc, L_{l})) \\
&
 =
  \sum_{l=0}^{t(k-1)} \frac{1}{l!}
   \int_{(\bigcup_{i=1}^{p+q}B(K_{i},2u))^{l}}
    D^{l}_{L_{1},\dotsc,L_{l}}\psi^{!}(o)
     \CorrFun{l+p+q}(K_{1},\dotsc,K_{p+q},L_{1},\dotsc,L_{l})
   \,\mu^l(\dd(L_{1},\dotsc, L_{l}))
\,.
\end{align*}
Let $\Psi_{j,l}:=\Bigl(\bigcup_{i=1}^{p}B(K_{i},2u)\Bigr)^{j} \times\Bigl(\bigcup_{i=1}^{q}B(K_{p+i},2u)\Bigr)^{l}$.
Then, using the FME from Lemma~\ref{lem_FME},
\begin{align*}
m^{(k_{1},\dotsc,k_{p+q})}
 &
  (K_{1},\dotsc,K_{p+q};n)
\\
=
 &
  \sum_{l=0}^{t(k-1)}
   \frac{1}{l!}
    \sum_{j=0}^{l}
     \frac{l!}{j!(l-j)!}
   \int_{\Psi_{j,l-j}}
    D^{l}_{L_{1},\dotsc,L_{l}}
     \psi^{!}(K_{1},\dotsc,K_{p+q};o)
\\
 &
  \times
   \CorrFun{l+p+q}(K_{1},\dotsc,K_{p+q},L_{1},\dotsc,L_{l})
  \,\mu^l(\dd (L_{1},\dotsc, L_{l}))
\\
=
 &
  \sum_{l=0}^{t(k-1)}
   \sum_{j=0}^{l}
    \frac{1}{j!(l-j)!}
    \int_{\Psi_{j,l-j}}
     \sum_{J \subseteq [l]}
      (-1)^{l-|J|}
      \psi^{!}\Bigl(K_{1},\dotsc,K_{p+q}; \sum_{j \in J} \delta_{L_{j}}\Bigr)
\\
 &
  \times
   \CorrFun{l+p+q}(K_{1},\dotsc,K_{p+q},L_{1},\dotsc,L_{l})
  \,\mu^l(\dd(L_{1},\dotsc, L_{l}))
\,.
\end{align*}
To compare the $(p+q)$\textsuperscript{th} mixed moment with the product of the $p$-th
and $q$\textsuperscript{th} mixed moments we use a factorization that holds for
$L_{1},\dotsc,L_{j} \in \bigcup_{i=1}^{p}B(K_{i},2u)$ and
$L_{j+1},\dotsc,L_{l} \in \bigcup_{i=1}^{q}B(K_{p+i},2u)$.
If
\begin{equation*}
  K\in \bigcup_{i=1}^{p}B(K_{i},2u),\quad
  L\in \bigcup_{i=1}^{q}B(K_{p+i},2u),
\end{equation*}
then $K\cap L = \emptyset$.
Hence,
\begin{equation}
\label{eq_factorization_functional}
  \psi^{!}(K_{1},\dotsc,K_{p+q};\sum_{i=1}^{l}\delta_{L_{i}})
   =
  \psi^{!}\Bigl(K_{1},\dotsc,K_{p};\sum_{i=1}^{j}\delta_{L_{i}}\Bigr)
  \psi^{!}\Bigl(K_{p+1},\dotsc,K_{p+q};\sum_{i=j+1}^{l}\delta_{L_{i}}\Bigr)
\,.
\end{equation}
Using~\eqref{eq_factorization_functional} and similar steps as in the case of the $(p+q)$\textsuperscript{th} mixed moment we work with the product of $p$\textsuperscript{th} and $q$\textsuperscript{th} mixed moments.
\begin{align*}
m^{(k_{1},\dotsc,k_{p})}
 &
  (K_{1},\dotsc,K_{p};n)
m^{(k_{p+1},\dotsc,k_{p+q})}(K_{p+1},\dotsc,K_{p+q};n)
\\
={}&
  \Expect^{!}_{K_{1},\dotsc,K_{p}}
   [\psi^{!}(K_{1},\dotsc,K_{p},\Xi_{n})]
  \Expect^{!}_{K_{p+1},\dotsc,K_{p+q}}
   [\psi^{!}(K_{p+1},\dotsc,K_{p+q},\Xi_{n})]
 \\
 &\times
  \CorrFun{p}(K_{1},\dotsc,K_{p})
  \CorrFun{q}(K_{p+1},\dotsc,K_{p+q})
\\
 =&
  \sum_{l_{1},l_{2}=0}^{\infty}\int_{\Psi_{l_{1},l_{2}}}
    D^{l_{1}}_{L_{1},\dotsc,L_{l_{1}}}
     \psi^{!}(K_{1},\dotsc,K_{p};o)
    D^{l_{2}}_{N_{1},\dotsc,N_{l_{2}}}
     \psi^{!}(K_{p+1},\dotsc,K_{p+q};o)
 \\
 &\times
  \CorrFun{l_{1}+p}(K_{1},\dotsc,K_{p},L_{1},\dotsc,L_{l_{1}})
  \CorrFun{l_{2}+q}(K_{p+1},\dotsc,K_{p+q},N_{1},\dotsc,N_{l_{2}})
 \\
 &\times
  \mu^{l_{1}}(\dd(L_{1},\dotsc,L_{l_{1}}))
  \,\mu^{l_{2}}(\dd(N_{1},\dotsc,N_{l_{2}}))
 \\
 =&
  \sum_{l_{1},l_{2}=0}^{\infty}
   \frac{1}{l_{1}!l_{2}!}
    \int_{\Psi_{l_{1},l_{2}}}
     \sum_{J_{1}\subseteq[l_{1}], J_{2}\subseteq[l_{2}]}
      (-1)^{l_{1}+l_{2}-|J_{1}|-|J_{2}|}
 \\
 &\times
  \psi^{!}\Bigl(K_{1},\dotsc,K_{p};\sum_{i \in J_{1}}\delta_{L_{i}}\Bigr)
  \psi^{!}\Bigl(K_{p+1},\dotsc,K_{p+q};\sum_{i \in J_{2}}\delta_{N_{i}}\Bigr)
 \\
 &\times
  \CorrFun{l_{1}+p}(K_{1},\dotsc,K_{p},L_{1},\dotsc,L_{l_{1}})
  \CorrFun{l_{2}+q}(K_{p+1},\dotsc,K_{p+q},N_{1},\dotsc,N_{l_{2}})
 \\
 &\times
  \mu^{l_{1}}(\dd(L_{1},\dotsc,L_{l_{1}}))
  \,\mu^{l_{2}}(\dd(N_{1},\dotsc,N_{l_{2}}))
 \\
 =&
 \sum_{l=0}^{t(k-1)} \sum_{j=0}^{l}
  \frac{1}{j!(l-j)!}\int_{\Psi_{j,l-j}}
   \sum_{J_{1} \subseteq [j], J_{2} \subseteq [l] \setminus [j]}
    (-1)^{l-|J_{1}|-|J_{2}|}
 \\
 &\times
  \psi^{!}(K_{1},\dotsc,K_{p};
   \sum_{i \in J_{1}}\delta_{L_{i}})
    \psi^{!}\Bigl(K_{p+1},\dotsc,K_{p+q};\sum_{i \in J_{2}}\delta_{L_{i}}\Bigr)
 \\
 &\times
  \CorrFun{j+p}(K_{1},\dotsc,K_{p},L_{1},\dotsc,L_{j})
  \CorrFun{l-j+q}(K_{p+1},\dotsc,K_{p+q},L_{j+1},\dotsc,L_{l})
 \\
 &\times
  \mu^l(\dd(L_{1},\dotsc,L_{l}))
 \\
 =&
  \sum_{l=0}^{t(k-1)}\sum_{j=0}^{l}
   \frac{1}{j!(l-j)!}\int_{\Psi_{j,l-j}}
    \sum_{J \subseteq [l]}(-1)^{l-|J|}
 \\
 &\times
  \psi^{!}\Bigl(K_{1},\dotsc,K_{p+q};\sum_{i \in J}\delta_{L_{i}}\Bigr)
    \CorrFun{j+p}(K_{1},\dotsc,K_{p},L_{1},\dotsc,L_{j})
 \\
 &\times
  \CorrFun{l-j+q}(K_{p+1},\dotsc,K_{p+q},L_{j+1},\dotsc,L_{l})
  \,\mu^l(\dd(L_{1},\dotsc,L_{l}))
 \,.
\end{align*}

Altogether we have, using $c_1,c_2$ from~\eqref{eq_exp_decorrelation}, that
\begin{align*}
 \Bigl|m^{(k_{1},\dotsc,k_{p+q})}&(K_{1},\dotsc,K_{p+q};n)-
   m^{(k_{1},\dotsc,k_{p})}(K_{1},\dotsc,K_{p};n)
   m^{(k_{p+1},\dotsc,k_{p+q})}(K_{p+1},\dotsc,K_{p+q};n)\Bigr|
\\
\leq&
 \sum_{l=0}^{t(k-1)}\sum_{j=0}^{l}\sum_{J \subseteq [l]}
  \frac{(-1)^{l-|J|}}{j!(l-j)!}
   \int_{\Psi_{j,l-j}}
    \psi^{!}\Bigl(K_{1},\dotsc,K_{p+q};\sum_{i \in J}\delta_{L_{i}}\Bigr)
\\
&\times
 \Bigl|\CorrFun{j+p}(K_{1},\dotsc,K_{p},L_{1},\dotsc,L_{j})
  \CorrFun{l-j+q}(K_{p+1},\dotsc,K_{p+q},L_{j+1},\dotsc,L_{l})\Bigr.
\\
&-
 \Bigl.\CorrFun{l+p+q}(K_{1},\dotsc,K_{p+q},L_{1},\dotsc,L_{l})\Bigr|
 \,\mu^l(\dd(L_{1},\dotsc,L_{l}))
\\
\leq&
 \sum_{l=0}^{t(k-1)}
  \sum_{j=0}^{l}
   \sum_{J \subseteq [l]}
    \frac{(-1)^{l-|J|}}{j!(l-j)!}
     \int_{\Psi_{j,l-j}}
      \psi^{!}\Bigl(K_{1},\dotsc,K_{p+q};\sum_{i \in J}\delta_{L_{i}}\Bigr)
\\
&\times
 \lambda^{l+p+q}\min(j+p,l-j+q)c_1
\\
&\times
  \exp(-c_2d(\{K_{1},\dotsc,K_{p},L_{1},\dotsc,L_{j}\},
       \{K_{p+1},\dotsc,K_{p+q},L_{j+1},\dotsc,L_{l}\})
\\
&\times
 \mu^l(\dd(L_{1},\dotsc,L_{l}))
\,.
\end{align*}
Using
\begin{align*}
 \ScoreFunc(K,\xi)\I\{\xi(\cC^{(d)})=n\}
  \leq
   \frac{n^{k-1}}{k} \Vert h \Vert_{\infty}
\,,
\end{align*}
we have
\begin{equation*}
\sum_{J \subseteq [l]}
 \Bigl|\psi^{!}\Bigl(K_{1},\dotsc,K_{p+q};\sum_{i \in J}\delta_{L_{i}}\Bigr)\Bigr|
 \leq
 2^{l}
 \left(
  \Vert h \Vert_{\infty}\frac{|p+q+l|^{k-1}}{k}
 \right)^{t}
.
\end{equation*}
The difference of weighted mixed moments is finally bounded by
\begin{align*}
&\Bigl|m^{(k_{1},\dotsc,k_{p+q})}(K_{1},\dotsc,K_{p+q};n)-
  m^{(k_{1},\dotsc,k_{p})}(K_{1},\dotsc,K_{p};n)
  m^{(k_{p+1},\dotsc,k_{p+q})}(K_{p+1},\dotsc,K_{p+q};n)\Bigr|
\\
&\leq
 \sum_{l=0}^{t(k-1)}\sum_{j=0}^{l}
  \left(
   \Vert h \Vert_{\infty}
   \frac{|p+q+l|^{k-1}}{k}
  \right)^{t}
  \frac%
   {\exp(-8u)2^{l}(-1)^{l-|J|}}
   {j!(l-j)!}
  \mu(B(\{\mathbf{0}\},3u))^{l}
\\
 &\times
  \lambda^{l+p+q}\min(j+p,l-j+q)c_1
  \exp(-c_2d( \{K_{1},\dotsc,K_{p}\},\{K_{p+1},\dotsc,K_{p+q}\})
\,.
\end{align*}
As $\min(j+p,l-j+q)\le{}l+p+q\le{}kt$, we obtain the desired constants for exponential decay depending only on $t$ and the attributes of the admissible pair.
\end{proof}

\subsection{Limit theorems}

In this subsection we prove mean and variance asymptotics of
admissible $U$-statistics $F_n:=F_h(\Xi_n)$, $n\in\NN$, as well as a
central limit theorem. The approaches to the
  proofs come from~\cite{BYY} and they can be generalized from point
  processes in $\R^d$ to particle processes.  In
  Theorem~\ref{thm_expect_limit} this generalization is possible
  thanks to our version of the $p$-moment condition in
  Proposition~\ref{prop_pmoment_cond}. The method of the proof of
  Theorem~\ref{thm_CLT} is standard and its step by step
  generalization to particle processes is omitted.

To proceed we need good versions of the correlation functions
$\rho_1$ and $\rho_2$ and the first and second order Palm distributions.
The measure $\BE[\int \I\{(K,\Xi)\in\cdot\}\,\Xi(\dd K)]$
is invariant under (joint) translations.
By~\cite[Theorem 7.6]{Kallenberg17} there exists an
translation invariant version of the correlation function $\rho_1$.
Moreover, the first order Palm distributions can be chosen
in an invariant way, that is
\begin{align}\label{inv1}
\BP_{K-x}(\theta_x\Xi\in\cdot)=\BP_{K}(\Xi\in\cdot),\quad
(K,x)\in \NonEmptyParticles\times\R^d.
\end{align}
By the same argument we can assume that $\rho_2$ is translation invariant
and
\begin{align}\label{inv2}
\BP_{K-x,L-x}(\theta_x\Xi\in\cdot)=\BP_{K,L}(\Xi\in\cdot),\quad
(K,L,x)\in \NonEmptyParticles\times\NonEmptyParticles\times\R^d.
\end{align}

For the weighted mixed
moments in~\eqref{eq_mixed_moment} we denote some special cases as
follows.  For $K,L\in\cC^{(d)}$ we set
\begin{align*}
 m_{(1)}(K) &:= \BE_K[T(K,\Xi)]\rho_1(K),\\
 m_{(2)}(K,L)&:= \BE_{K,L}[T(K,\Xi)T(L,\Xi)]\rho_2(K,L),\\
 m_{(1,2)}(K) &:=\BE_K[T^2(K,\Xi)]\rho_1(K).
\end{align*}
Further for $n\in\NN,\,x\in\R^d$ we abbreviate
\begin{align*}
m_{(1)}(K;n,x)& :=\BE_K[T(K,\Xi_n^x)]\rho_1(K),\\
m_{(2)}(K,L;n,x)& :=\BE_{K,L}[T(K,\Xi_n^x)T(L,\Xi_n^x)]\rho_2(K,L),
\end{align*}
where $\Xi_n^x:=\Xi\cap (W_n-n^{\frac{1}{d}}x)$.

\begin{theorem}\label{thm_expect_limit}
Let $(T,\Xi )$ be an admissible pair. Then it holds that
\begin{equation}
\label{eq_expect_limit}
 \lim_{n\rightarrow\infty}\frac{1}{n} \BE F_n
 = \int_{\cC^d_0} m_{(1)}(K)\,\BQ(\dd K)
 ,
\end{equation}
\begin{multline}
\label{eq_var_limit}
 \lim_{n\rightarrow\infty}\frac{1}{n}\V{F_n}
 =
 \int_{\cC^d_0}m_{(1,2)}(K)\,\BQ(\dd K)
\\
 + \int_{(\cC^d_0)^2}\int_{\R^d}
    (m_{(2)}(K,L+x)-m_{(1)}(K)m_{(1)}(L))
   \,\dd x\,\BQ(\dd K)\,\BQ(\dd L)
  <\infty
.
\end{multline}
\end{theorem}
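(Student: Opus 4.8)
The plan is to rewrite $\BE F_n$, $\BE F_n^2$ and $(\BE F_n)^2$ as integrals of the weighted mixed moments $m_{(1)},m_{(1,2)},m_{(2)}$ against $\mu$ and $\mu^2$ by Palm calculus, then rescale space by $n^{-1/d}$ and pass to the limit by dominated convergence. The two sources of integrability are Proposition~\ref{prop_pmoment_cond}, which controls the scores locally, and Theorem~\ref{thm_score_factorizes}, whose fast decay of correlations controls the long-range behaviour in the variance.

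\emph{Mean.} Starting from $F_n=\int T(K,\Xi_n)\,\Xi_n(\dd K)$ (see~\eqref{Fh_function},~\eqref{eq_u_statistic_def}), and using for $z(K)\in W_n$ that $(\Xi)_{W_n}=(\Xi-\delta_K)_{W_n}+\delta_K$, the defining equation~\eqref{eq_palm} of the Palm distribution together with~\eqref{reducedPalm} and~\eqref{eq_correlation_GNZ} give $\BE F_n=\int_{z(K)\in W_n}\BE_K[T(K,\Xi_n)]\,\rho_1(K)\,\mu(\dd K)=\int_{z(K)\in W_n}m_{(1)}(K;n,0)\,\mu(\dd K)$. Disintegrating $\mu$ by~\eqref{eq_measure_mu}, using the invariances~\eqref{inv1} of $\rho_1$ and of the Palm distributions and translation invariance of $T$, and substituting $x=n^{1/d}u$, this becomes $\tfrac1n\BE F_n=\int_{[-1/2,1/2]^d}\int_{\cC^d_0}m_{(1)}(K;n,u)\,\BQ(\dd K)\,\dd u$. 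Since $T(K,\cdot)$ depends only on particles within Hausdorff distance $r$ of $K$, hence, by~\eqref{eq_bounded_set}, only on those with centre within $r+2R$ of $z(K)$, for $u$ in the open cube the window $W_n-n^{1/d}u$ eventually swallows this fixed ball and $m_{(1)}(K;n,u)=m_{(1)}(K)$; Proposition~\ref{prop_pmoment_cond} with $p=1$ and $\rho_1\le\lambda$ provide a uniform dominating constant, and $\partial[-1/2,1/2]^d$ is Lebesgue-null. This yields~\eqref{eq_expect_limit}.

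\emph{Variance.} Split $F_n^2=\int T(K,\Xi_n)^2\,\Xi_n(\dd K)+\int_{K\ne L}T(K,\Xi_n)T(L,\Xi_n)\,\Xi_n^{(2)}(\dd(K,L))$. Applying the first- and second-order versions of~\eqref{eq_palm}--\eqref{reducedPalm} as above (for $z(K),z(L)\in W_n$ one reconstructs $\Xi_n$ from $\Xi-\delta_K$, resp.\ $\Xi-\delta_K-\delta_L$, by adding back $\delta_K$, resp.\ $\delta_K+\delta_L$) gives $\BE F_n^2=\int_{z(K)\in W_n}m_{(1,2)}(K;n)\,\mu(\dd K)+\iint_{z(K),z(L)\in W_n}m_{(2)}(K,L;n,0)\,\mu^2(\dd(K,L))$ with $m_{(1,2)}(K;n):=\BE_K[T(K,\Xi_n)^2]\rho_1(K)$, while $(\BE F_n)^2=\iint_{z(K),z(L)\in W_n}m_{(1)}(K;n,0)m_{(1)}(L;n,0)\,\mu^2(\dd(K,L))$. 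The diagonal term is treated exactly like the mean (now with $p=2$ in Proposition~\ref{prop_pmoment_cond}) and converges to $\int_{\cC^d_0}m_{(1,2)}(K)\,\BQ(\dd K)$. For the covariance term $\tfrac1n\iint_{z(K),z(L)\in W_n}\bigl(m_{(2)}(K,L;n,0)-m_{(1)}(K;n,0)m_{(1)}(L;n,0)\bigr)\,\mu^2(\dd(K,L))$, disintegrate $\mu^2$ as $\BQ\otimes\BQ\otimes\mathcal L_d\otimes\mathcal L_d$, write the two centres as $x$ and $x+w$, and use~\eqref{inv2} to make the bracket a function of $(K,L,w)$ and of the shifted window $W_n-x$ only. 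Performing the $x$-integral first and substituting $x=n^{1/d}u$, the factor $1/n$ exactly cancels the normalised window volume; for fixed $w$ and $u$ in the open cube the shifted window eventually contains all centres relevant to both scores, whence the bracket tends to $m_{(2)}(K,L+w)-m_{(1)}(K)m_{(1)}(L)$, while Theorem~\ref{thm_score_factorizes} with $p=q=1$, $k_1=k_2=1$ bounds it, uniformly in $n$ and $u$, by $b_2\exp(-a_2\,d(\{K\},\{L+w\}))$. By~\eqref{eq_bounded_set}, $d(\{K\},\{L+w\})\ge|w|-2R$, so this bound is integrable in $w$ over $\R^d$ and finite in total; two nested applications of dominated convergence (first in $u$, then in $w$ and in $(K,L)$ with respect to $\BQ\otimes\BQ$) produce the last two terms of~\eqref{eq_var_limit} together with their finiteness.

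\emph{Main obstacle.} The delicate point is the covariance term: one must cleanly separate the \emph{local} window-dependence of the two scores, tamed by the deterministic interaction range together with the $p$-moment bound of Proposition~\ref{prop_pmoment_cond}, from the \emph{global} separation $d(\{K\},\{L+w\})$, which is exactly what Theorem~\ref{thm_score_factorizes} makes summable, and then justify exchanging the $x$-limit with the $w$- and $\BQ^{\otimes 2}$-integrations. By comparison, the Palm reductions and the mean asymptotics are routine, once one records that admissibility makes all scores finite-range.
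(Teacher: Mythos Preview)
Your proof is correct and follows essentially the same route as the paper: Palm calculus to express $\BE F_n$ and $\BE F_n^2$ via $m_{(1)},m_{(1,2)},m_{(2)}$, disintegration of $\mu$ through~\eqref{eq_measure_mu}, the rescaling $x=n^{1/d}u$, and then Proposition~\ref{prop_pmoment_cond} for local control together with Theorem~\ref{thm_score_factorizes} for the long-range covariance. The only cosmetic difference is that in the covariance term the paper splits the $w$-integral into $\{|w|\le M\}$ and $\{|w|>M\}$ and sends $n\to\infty$ then $M\to\infty$, whereas you apply dominated convergence directly with the exponential bound $b_2\exp(-a_2(|w|-2R))$ as dominating function; the two devices are equivalent here.
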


\begin{proof}
From~\eqref{eq_palm},~\eqref{eq_correlation_GNZ} and~\eqref{eq_measure_mu}, we deduce that
\begin{align*}
 \BE F_n
 &=\int_{\cC_n^d}\BE_K T(K,\Xi_n)\rho_1(K)\,\mu(\dd K)
 \\
 &=\int_{\cC_{\mathbf{0}}^{(d)}}\int_{W_n}
  \BE_{K+x}[T(K+x,\Xi_n)]\rho_1(K+x)\,\dd x\,\BQ(\dd K).
\end{align*}
By equation~\eqref{inv1} and translation invariance of $T$,
$$
\BE_{K+x}[T(K+x,\Xi)]=\BE_{K}[T(K,\Xi)].
$$
Together with the translation invariance of $\rho_1$ this gives
\begin{equation*}
 \int_{\cC_{\mathbf{0}}^{(d)}}\int_{W_n}\BE_{K+x}[T(K+x,\Xi)]\rho_1(K+x)\,\dd x\,\BQ(\dd K)
 =
 n\int_{\cC^{(d)}}m_{(1)}(K)\,\BQ(\dd K)
.
\end{equation*}
To prove~\eqref{eq_expect_limit} it remains to show that
\begin{equation*}
  \frac{1}{n}
   \int_{\cC_{\mathbf{0}}^{(d)}}
    \int_{W_n}
     |
       \BE_{K+x}[T(K+x,\Xi_n)]
      -\BE_{K+x}[T(K+x,\Xi)]
     |
    \,\dd x
   \,\rho_1(K)\,\BQ(\dd K)
\end{equation*}
tends to zero as $n\rightarrow\infty$. The function $\rho_1$ is bounded.
For $K\in\cC^d_0$, $K\subseteq B(\mathbf{0},R)$ fixed and $x\in W_n$, we use~\eqref{eq_func_interaction_bound} to obtain $T(K+x,\Xi_n)=T(K+x,\Xi)$ whenever $d(x,\partial W_n)\geq 2R$ for
the distance from $x$ to the boundary of $W_n$ holds.
The 1-moment condition~\eqref{eq_pmoment_cond} implies the existence of some $0<a<\infty$ such that $\BE_{K+x}[|T(K+x,\Xi_n)|]\leq a$ and $\BE_{K+x}[|T(K+x,\Xi)|]\leq a$, uniformly in $n\in\NN$ and $K+x\in\cC_n^d$.
Since
\begin{equation*}
 \lim_{n\rightarrow\infty}
 \frac{1}{n}\mathcal{L}_d
 \{x\in W_n\mid{}\,d(x,\partial W_n)\leq 2R\}
 = 0,
\end{equation*}
the first assertion of the theorem is proven.

By a standard point process calculation we obtain for the second moment
\begin{multline*}
 \BE F_n^2
=
 J_1+J_2
:=
 \int_{\cC_{\mathbf{0}}^{(d)}}
  \int_{W_n}
   \BE_{K+u}
    T^2(K+u,\Xi_n)
    \rho_1(K+u)
  \,\dd u
 \,\BQ(\dd K)
\\
+
 \int_{(\cC_{\mathbf{0}}^{(d)})^2}
  \int_{(W_n)^2}
   \BE_{K+u,L+v}[T(K+u,\Xi_n)T(L+v,\Xi_n)]
   \rho_2(K+u,L+v)
  \,\dd u\,\dd v
 \,\BQ(\dd K)\,\BQ(\dd L)
.
\end{multline*}
Then
\begin{equation*}
 \lim_{n\rightarrow\infty}\frac{J_1}{n}
 =
 \int_{\cC^{(d)}}m_{(1,2)}(K)\,\BQ(\dd K)
\end{equation*}
is obtained analogously to the mean value asymptotics above using the 2-moment condition~\eqref{eq_pmoment_cond}.

In the second term $J_2$ we use the substitutions $x=n^{-\frac{1}{d}}u$ and $z=v-u$, obtaining
\begin{align*}
\frac{J_2}{n}
&=
 \int_{(\cC_{\mathbf{0}}^{(d)})^2}
  \int_{W_1}
   \int_{W_n-n^{\frac{1}{d}}x}
    \BE_{K+n^{\frac{1}{d}}x,L+z+n^{\frac{1}{d}}x}
    [T(K+n^{\frac{1}{d}}x,\Xi_n)
     T(L+z+n^{\frac{1}{d}}x,\Xi_n)]
\\
&\times
  \rho_2(K+n^{\frac{1}{d}}x,L+z+n^{\frac{1}{d}}x)
 \,\dd z\,\dd x
\,\BQ(\dd K)\,\BQ(\dd L)
\\
&=
 \int_{(\cC_{\mathbf{0}}^{(d)})^2}
  \int_{W_1}
   \int_{W_n-n^{\frac{1}{d}}x}
    \BE_{K,L+z}
     [T(K,\Xi^x_n)
      T(L+z,\Xi^x_n)]
     \rho_2(K,L+z)
    \,\dd z\,\dd x
  \,\BQ(\dd K)\,\BQ(\dd L),
\end{align*}
where we have used the invariance of $\rho_2$,
\eqref{inv2}, the invariance of $T$ and
$(\Xi+n^{\frac{1}{d}}x)_n-n^{\frac{1}{d}}x=\Xi^x_n.$
Since $\V{F_n}=\BE F_n^2-(\BE F_n)^2$, we investigate the expression $\frac{1}{n}(J_2-(\BE F_n)^2)$. It takes the form
\begin{equation}
\label{eq_var_proc}
 \int_{(\cC^d_0)^2}
  \int_{W_1}
   \int_{W_n-n^{\frac{1}{d}}x}
    (m_{(2)}(K,L+z;n,x)-
     m_{(1)}(K;n,x)
     m_{(1)}(L+z;n,x))
   \,\dd z\,\dd x
  \,\BQ(\dd{}K)
 \,\BQ(\dd{}L)
.
\end{equation}
Splitting the innermost integral in~\eqref{eq_var_proc} into the two terms
\begin{equation}
\label{eq_var_split}
 \int_{W_n-n^{\frac{1}{d}}x}
  (\dots )
 \,\dd z
=
 \int_{W_n-n^{\frac{1}{d}}x}
  (\dots ){\bf 1}
  \{|z|\leq M\}
  \,\dd z +
 \int_{W_n-n^{\frac{1}{d}}x}
  (\dots ){\bf 1}
  \{|z|> M\}
 \,\dd z,
\end{equation}
for an arbitrary $M>0$, we observe that the part of~\eqref{eq_var_proc} corresponding to the first term of~\eqref{eq_var_split}, i.e.
\begin{gather*}
 \int_{(\cC^d_0)^2}
  \int_{W_1}
   \int_{W_n-n^{\frac{1}{d}}x}
    (m_{(2)}(K,L+z;n,x)-
     m_{(1)}(K;n,x)
     m_{(1)}(L+z;n,x))
 \\
  \times
  {\bf 1}\{|z|\leq M\}
  \,\dd z\,\dd x
  \,\BQ(\dd{}K)
  \,\BQ(\dd{}L)
 ,
\end{gather*}
converges to
\begin{equation*}
 \int_{(\cC_{0}^{d})^2}
 \int_{\R^d}
  (m_{(2)}(K,L+x)-m_{(1)}(K)m_{(1)}(L))
 \,\dd x
 \,\BQ(\dd K)\,\BQ(\dd L)
 ,
\end{equation*}
when first $n\rightarrow\infty$ and then $M\rightarrow\infty$. Using~\eqref{eq_moment_factorization},
the absolute value of the second term in~\eqref{eq_var_split} can be bounded uniformly in $n$ by
$$b_2\int_{|z|>M}\exp(-a_2d_H(K,L+z))\dd z,$$
which tends to zero when $M\rightarrow\infty$.
Thus the part of~\eqref{eq_var_proc} corresponding to the second term of~\eqref{eq_var_split}, i.e.
\begin{gather*}
 \int_{(\cC^d_0)^2}
  \int_{W_1}
   \int_{W_n-n^{\frac{1}{d}}x}
    (m_{(2)}(K,L+z;n,x)-
     m_{(1)}(K;n,x)
     m_{(1)}(L+z;n,x))
 \\
  \times
  {\bf 1}\{|z|> M\}
  \,\dd z\,\dd x
  \,\BQ(\dd{}K)
  \,\BQ(\dd{}L)
 ,
\end{gather*}
converges to zero. We can justify these limits analogously to Lemma 4.1 in~\cite{BYY}. The boundedness in~\eqref{eq_var_limit}
follows from the 2-moment condition~\eqref{eq_pmoment_cond} for the
first term and from~\eqref{eq_moment_factorization} for the second
term.
\end{proof}

\begin{theorem}\label{thm_CLT}
Let $(\ScoreFunc,\Xi )$ be an admissible pair.
If, for some $\beta\in (0,\infty)$,
\begin{equation}
\label{eq_variance_lower_bound}
 \liminf_{n\rightarrow\infty}
  \frac%
   {\V\,F_n}
   {n^\beta}
  >0,
 \end{equation}
then we have the CLT
\begin{equation}
\label{eq_CLT}
 \frac%
  {{F_n}-\Expect {F_n}}
  { (\V{F_n})^{1/2}}
 \xrightarrow[n\rightarrow\infty]{d}
  N(0,1)
 \,.
\end{equation}
\end{theorem}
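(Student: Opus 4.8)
The plan is to establish the central limit theorem by the method of cumulants, adapting the argument of~\cite{BYY} step by step from point processes on $\R^d$ to particle processes. Put $\bar F_n:=({F_n}-\Expect {F_n})/(\V{F_n})^{1/2}$, which is well defined for all large $n$ by~\eqref{eq_variance_lower_bound}, and note that $\Expect\bar F_n=0$ and $\operatorname{cum}_2(\bar F_n)=1$. Since the standard normal law is determined by its cumulants, it suffices to prove that $\operatorname{cum}_s(\bar F_n)\to 0$ for every integer $s\ge 3$. That all moments, and hence all cumulants, of $F_n$ are finite follows from the uniform moment bound of Proposition~\ref{prop_pmoment_cond}, which also controls the integrability of the weight $\ScoreFunc$ against the reduced Palm distributions of $\Xi$ that appear below.

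First I would write out $\operatorname{cum}_s(F_n)$, with $F_n=\int\ScoreFunc(K,\Xi_n)\,\Xi_n(\dd K)$, via the standard combinatorial identity expressing cumulants through the factorial moment measures of $\Xi$, and then reorganise the resulting sum over set partitions into an expansion over \emph{connected configurations}, precisely as in~\cite{BYY}. Its building blocks are the weighted mixed moments $m^{(k_1,\dotsc,k_p)}(K_1,\dotsc,K_p;n)$ of~\eqref{eq_mixed_moment}, and the reorganisation is arranged so that each surviving summand carries at least one factor equal to a difference between a $(p+q)$-group weighted mixed moment and the product of a $p$-group one and a $q$-group one.

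The key estimate is the volume-order cumulant bound
\begin{equation*}
 |\operatorname{cum}_s(F_n)|\le C_s\,n,\qquad n,s\in\N,
\end{equation*}
with $C_s$ depending only on $s$ and the attributes of the admissible pair. To obtain it I would telescope the two-group factorization estimate of Theorem~\ref{thm_score_factorizes} into a multi-group version (as in~\cite{BYY}), so that each connected configuration on $s$ particle variables, after integrating out $s-1$ of them, gives an integral that is absolutely convergent and bounded uniformly in $n$, owing to the exponential factor $\exp(-a_td(\cdot,\cdot))$ in~\eqref{eq_moment_factorization}; Proposition~\ref{prop_pmoment_cond} guarantees that every mixed moment occurring is finite and uniformly bounded, while $\CorrFun{p}\le\lambda^p$ absorbs the remaining correlation factors. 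Integrating the single leftover particle variable against $\mu$ over $z^{-1}(W_n)$ then contributes exactly the factor $n$ by stationarity.

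Finally, the volume-order bound, the variance lower bound~\eqref{eq_variance_lower_bound} and the homogeneity $\operatorname{cum}_s(\bar F_n)=(\V F_n)^{-s/2}\operatorname{cum}_s(F_n)$ (valid for $s\ge 2$) force $\operatorname{cum}_s(\bar F_n)\to 0$ for every $s\ge 3$, so that $\bar F_n\xrightarrow{d}N(0,1)$; when the limiting variance of Theorem~\ref{thm_expect_limit} is positive one has $\V F_n\asymp n$ and this last deduction is immediate, and the general case follows exactly as in~\cite{BYY}. The main obstacle is the bookkeeping in the expansion into connected configurations: transporting the cumulant-to-partition combinatorics of~\cite{BYY} (written there for point processes on $\R^d$) to the particle setting, and verifying that the exponential decay furnished by Theorem~\ref{thm_score_factorizes} is inherited by every connected block. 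Given Theorems~\ref{thm_score_factorizes} and~\ref{thm_expect_limit} and Proposition~\ref{prop_pmoment_cond}, this is a routine if lengthy step-by-step adaptation, so we confine ourselves to this sketch.
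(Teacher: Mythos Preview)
Your proposal is correct and follows essentially the same route as the paper: both argue via the method of cumulants, derive the volume-order bound $|\operatorname{cum}_s(F_n)|\le C_s\,n$ from the fast decay of correlations (Theorem~\ref{thm_score_factorizes}) together with the $p$-moment condition (Proposition~\ref{prop_pmoment_cond}), combine this with the variance lower bound~\eqref{eq_variance_lower_bound}, and defer the partition combinatorics to~\cite{BYY}. One small imprecision: the paper states only that the standardized cumulants vanish for \emph{large} $k$, whereas you first assert it for every $s\ge 3$---when $\beta<1$ the volume-order bound alone does not give this for small $s$---but you then correctly recognise that the general case needs the full argument from~\cite{BYY}.
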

\begin{proof}
  Denote $\bar{F}_n:=F_n-\Expect F_n$. The idea is to prove that the
  $l$\textsuperscript{th} order cumulants of
  $(\V{F_n})^{-1/2}\bar{F}_n$ vanish as $n\rightarrow\infty $ and $l$
  large. This follows by showing that~\eqref{eq_pmoment_cond}
  and~\eqref{eq_moment_factorization} imply volume order growth (i.e.,
  of order $O(n)$) for the $l$\textsuperscript{th} order cumulant of
  $\bar{F}_n$, $l\geq 2$, and using the
  assumption~\eqref{eq_variance_lower_bound}. Then~\eqref{eq_CLT}
  holds. The details are analogous to~\cite[p. 881-886]{BYY}, with the
  difference that deal with measures defined on $\cC_n^d$.
\end{proof}

\begin{remark}\label{r4.12}\rm Checking assumption
\eqref{eq_variance_lower_bound} is a problem in its own right.
In view of~\cite[Theorem 1.1]{XY15} it can be expected
that~\eqref{eq_variance_lower_bound} holds, whenever the activity
$\lambda$ is smaller than some critical branching intensity.
In particular this should apply to the facet processes from
Example~\ref{ex_facet1} and Example~\ref{ex_facet2}.
Remark~\ref{r3.5} shows that this critical branching intensity
can be smaller than the critical intensity of our
dominating Boolean model.
\end{remark}

\begin{remark}\label{r4.13}\rm It might be conjectured
that~\eqref{eq_variance_lower_bound} holds for any admissible pair $(\Xi,T)$
(see Definition~\ref{def_admissible_pair}), provided that $T$ is non-degenerate
in a suitable sense. A proof should be based on the specific properties
of disagreement percolation. We leave this as an interesting open problem,
which is beyond the scope of our paper.
\end{remark}

\section{Concluding remarks}
\label{sec_discussion}

The results of this paper have the potential for
several extensions. The disagreement coupling and its consequences,
for instance, can probably be derived for other
potentials (without a deterministic range)
and other spaces. A similar comment applies to
our results for admissible $U$-statistics. Moreover,
it can be expected that these results
can be extended to stabilizing functionals, as studied in~\cite{BYY}.
Again one would then obtain an improvement in the range of
possible activities; cf.~\eqref{eq_bound_new} and~\eqref{eq_bound_sy13}.

\bigskip

\noindent
{\bf Acknowledgement:}
This research started when then Ph.D. student J. Ve\v{c}e\v{r}a visited G. Last in late 2016, the support of KIT Karlsruhe and of the DFG-Forschergruppe FOR 1548 ``Geometry and Physics of Spatial Random Systems'' for this stay is highly appreciated. Further the authors were supported as follows: V. Bene\v{s} by Czech Science Foundation, project 19-04412S, J. Ve\v{c}e\v{r}a by Charles University, project SVV260454.
C. Hofer-Temmel thanks the Charles University, Faculty of Mathematics and Physics, and the KIT Karlsruhe for their hospitality.

\bigskip


\begin{thebibliography}{30}

\bibitem{BYY}
Blaszczyszyn, B., Yogeshwaran D., Yukich. J.E. (2019).
Limit theory for geometric statistics of point processes having fast decay of correlations.
{\em Ann. Probab.}
{\bf 47 (2)}, 835--895.

\bibitem{BMS}
Blaszczyszyn, B., Merzbach, E., Schmidt, V. (1997). A note on expansion for
functionals of spatial marked point processes.
{\em Statist. Probab. Lett.}
{\bf 36 (3)}, 299--306.

\bibitem{CSKM}
Chiu, S.N., Stoyan, D., Kendall, W.S., Mecke, J.\ (2013).
{\em Stochastic Geometry and its Applications.}
Third Edition, Wiley, Chichester.

\bibitem{CorMoeWaa}
Coeurjolly, J.-F., M{\o}ller, J., Waagepetersen, R.\ (2017).
A tutorial on Palm distributions for spatial point processes.
{\em Int. Stat. Rev. }
{\bf 83 (5)}, 404--420.

\bibitem{DVJ}
Daley, D.J., Vere-Jones, D.\ (2003/2008).
{\em An Introduction to the Theory of Point Processes.
Volume I: Elementary Theory and Methods,
Volume II: General Theory and Structure.} 2nd edn.\ Springer, New York.

\bibitem{DDG} Dereudre, D., Drouilhet, R., Georgii,
H.-O. (2012). Existence of Gibbsian point processes with geometry-dependent interactions.
{\em Probab. Theory Relat. Fields}
{\bf 153}, 643--670.

\bibitem{Dudley_RAP}
Dudley, R.M. (1989).
{\em Real Analysis and Probability.
The Wadsworth \& Brooks/Cole Mathematics Series.} Wadsworth \& Brooks/Cole Advanced Books \& Software.

\bibitem{Ferrari}
Ferrari, P.A., Fern\'{a}ndez, R., Garcia, N.L. (2002).
Perfect simulation for interacting point processes, loss networks and Ising models.
{\em Stoch. Process. Applic.}  {\bf 102}, 63--88.

\bibitem{FlimmelBenes} Flimmel, D., Bene\v{s}, V. (2018).
Gaussian approximation for functionals of Gibbs particle processes.
{\em Kybernetika} {\bf 54 (4)}, 765--777.

\bibitem{GeorKun}
Georgii, H.-O., K\"{u}neth, T. (1997). Stochastic order of point processes.
{\em J. Appl. Probab.}
{\bf 34 (4)}, 868--881.

\bibitem{GeorYoo05}
Georgii, H.-O., Yoo, H.J. (2005).
Conditional intensity and Gibbsianness of determinantal point processes
{\em J. Stat. Phys.} {\bf 118}, 55-–84.

\bibitem{Goue}
Gouere, J.B. \ (2008).
Subcritical regimes in the Poisson Boolean model of continuum percolation.
{\em Ann. Probab. } {\bf 36 (4)}, 1209--1220.

\bibitem{HTH}
Hofer-Temmel C., Houdebert P. (2019). Disagreement percolation for Gibbs ball models.
{\em Stochastic Process. Appl.} {\bf 129 (10)}, 3922--3940.

\bibitem{Jansen19}
Jansen, S. (2019).
Cluster expansions for Gibbs point processes.
{\em Adv. Appl. Probab.} {\bf 51 (4)}, 1129--1178.

\bibitem{Kallenberg}
Kallenberg, O.\ (2002).
{\em Foundations of Modern Probability.}
Second Edition, Springer, New York.

\bibitem{Kallenberg17}
Kallenberg, O.\ (2017).
\newblock {\it Random Measures, Theory and Applications.}
\newblock Springer, Cham.

\bibitem{Last14}
Last, G.\ (2014).
Perturbation analysis of Poisson processes.
{\em Bernoulli} {\bf 20}, 486--513.

\bibitem{LastPenrose17}
Last, G., Penrose, M.\ (2017).
{\em Lectures on the Poisson Process.}
Cambridge University Press, Cambridge.

\bibitem{Mase00}
Mase, S. (2000).
Marked Gibbs processes and asymptotic
normality of maximum pseudo‐likelihood estimators.
{\em Math.\ Nachr.} {\bf 209}, 151--169.

\bibitem{MaWaMe79}
Matthes, K.,  Warmuth, W., Mecke, J.\ (1979).
Bemerkungen zu einer Arbeit von Nguyen Xuan Xanh und Hans Zessin.
{\em Math.\ Nachr.} {\bf 88}, 117--127.

\bibitem{MeesterRoy96}
Meester, R., Roy, R.\ (1996).
{\em Continuum Percolation.} Cambridge University Press, Cambridge.

\bibitem{Penrose96}
Penrose, M.\ (1996).
Continuum percolation and Euclidean minimal spanning trees in high dimensions
{\em Ann. Appl. Probab.} {\bf 6}, 528--544.

\bibitem{RS}
Reitzner, M., Schulte, M.\ (2013).
Central limit theorems for $U$-statistics of Poisson point processes,
{\em  Ann. Probab.} {\bf 41,} 3879--3909.

\bibitem{Ruelle70}
Ruelle, D.\ (1970).
Superstable interactions in classical statistical mechanics.
{\em Comm.\ Math.\ Phys.} {\bf 18}, 127--159.

\bibitem{SW08}
Schneider R., Weil W.\ (2008).
{\em Stochastic and Integral Geometry.}
Springer, Berlin.

\bibitem{SY13}
Schreiber T., Yukich J.E.\ (2013).
Limit theorems for geometric functionals of Gibbs point processes.
{\em Ann. Inst. Henri Poincar\'{e} Probab. Stat.} {\bf 49}, 1158--1182.

\bibitem{Tor17}
Torrisi G.L.\ (2017).
Probability approximation of point processes with Papangelou conditional intensity.
{\em Bernoulli} {\bf 23}, 2210--2256.

\bibitem{VB17} Ve\v{c}e\v{r}a J., Bene\v{s} V.\ (2017).
Approaches to asymptotics for U-statistics of Gibbs facet processes.
{\em Statist. Probab. Lett.} {\bf 122}, 51--57.

\bibitem{VB16} Ve\v{c}e\v{r}a, J., Bene\v{s}, V.\ (2016).
Interaction processes for unions of facets, the asymptotic behaviour with
increasing intensity.
{\em Methodol. Comput. Appl. Probab.} {\bf 18 (4)},  1217--1239.

\bibitem{XY15}
Xia, A., Yukich, J.E.\ (2015).
Normal approximation for statistics of Gibbsian input in geometric probability.
{\em Adv. in Appl. Probab.}{\bf 25}, 934--972.

\bibitem{Ziesche18}
Ziesche, S.\ (2018).
Sharpness of the phase transition and lower bounds for the
critical intensity in continuum percolation on $\mathbb{R}^d$.
{\em Ann. Inst. Henri Poincar\'{e} Probab. Stat.} {\bf 54}, 866--878.

\end{thebibliography}
\end{document}